\documentclass[12pt,reqno]{article}

\usepackage{amsmath,amsthm,amssymb,amsfonts}

\usepackage{graphicx}
\usepackage[dvipsnames]{xcolor}
\usepackage{verbatim}

\usepackage{authblk}

\usepackage{hyperref}

\theoremstyle{plain}
\newtheorem{theorem}{Theorem}
\newtheorem{corollary}[theorem]{Corollary}
\newtheorem{lemma}[theorem]{Lemma}
\newtheorem{proposition}[theorem]{Proposition}

\theoremstyle{definition}
\newtheorem{definition}{Definition}
\newtheorem{example}{Example}
\newcommand{\setEnvironmentQed}[2]{
\AtBeginEnvironment{#1}{%
    \pushQED{\qed}\renewcommand{\qedsymbol}{#2}%
  }
\AtEndEnvironment{#1}{\popQED}
}
\setEnvironmentQed{example}{\ensuremath{\blacksquare}}

\theoremstyle{remark}

\newtheorem*{remark*}{Remark}

\newcommand{\floor}[1]{\left\lfloor #1 \right\rfloor}

\begin{document}

\title{From a Voucher Puzzle to Extremal Sums of Adjacent Products}

\author{Chris Chen}
\author{Vivian Chen}
\author{Ray Cui}
\author{Ermin Dong}
\author{Alexander Radul}
\author{Lev Radul}
\author{Jack Shan}
\author{Arthur Shu}
\author{Kenneth Sun}
\author{Kenneth Wood}
\author{William Zelevinsky}
\author{Brian Zhao}
\affil{PRIMES STEP}
\author{Tanya Khovanova}
\affil{MIT}
\date{}

\maketitle

\begin{abstract}
Motivated by a self-referential puzzle, we study sequences of voucher price tags in which each choice multiplies the cost of the following one. We connect the puzzle setting to classical permutation statistics, introducing the \textit{voucher cost} alongside the related \textit{pairwise} and \textit{loop} costs. This perspective allows us to translate questions about budgeting into extremal problems on permutations. We review known results for permutations of ${1,2,\dots,n}$ and extend them to arbitrary sets of distinct non-negative price tags.
\end{abstract}

\section{Introduction}

This puzzle appears in the book \textit{Mathematical Puzzles and Curiosities} \cite{DKS}.

\begin{quote}
As you walk into the self-referential shop, you see a shelf full of vouchers. A voucher reading ``The next voucher you buy costs $N$ times its price tag'' has a price tag of $N$ dollars. Each voucher's multiplier applies only to the immediately following purchase. An infinite number of vouchers is in front of you, corresponding to $N=1,2,3,\ldots$. There is a single voucher available to purchase for each value of $N$. If, for example, you bought a voucher corresponding to $N=2$, followed by one corresponding to $N=3$, followed by one corresponding to $N=4$, you'd pay \$20 in total (\$2 for the first voucher, plus $\$3\cdot 2 = \$6$ for the second one, plus $\$4\cdot 3 = \$12$ for the third one).

Given a budget of 26 dollars, what is the maximum number of vouchers you can buy?
\end{quote}

Each voucher has a fixed \textit{price tag}, but purchasing a voucher also affects the cost of the next purchase by multiplying it by a specified factor. Even in the simplest setting, where the available vouchers have price tags $1,2,3,\dots$ and each multiplier applies only to the immediately following purchase, natural optimization questions arise. Given a fixed budget, how many vouchers can be bought? Equivalently, given a fixed number of vouchers, what is the minimum total cost, and how does this depend on the order in which the vouchers are purchased?

To study these questions systematically, we start Section~\ref{sec:prelim} with preliminaries and definitions. We introduce the \textit{voucher cost} of a sequence of vouchers, a quantity closely related to two well-known permutation statistics: the \textit{pairwise cost}, given by the sum of products of adjacent terms, and the \textit{loop cost}, which adds an additional product between the first and last terms. The pairwise and loop costs have appeared previously in various contexts, including extremal problems for permutations and applications to combinatorial optimization; see \cite{Klosinski, Mihai, Benoumhani}.

Our main focus is on extremal behavior.
For permutations of the natural numbers $\{1,2,\dots,n\}$, the maximum and minimum values of the pairwise and loop costs are known, and explicit formulas and constructions are available. We review these results and show how they fit naturally into the voucher framework.

In Section~\ref{sec:flippingtranscaling}, we then extend the analysis to arbitrary sets of distinct non-negative real price tags and introduce two key technical tools. The Flipping Lemma~\ref{lem:flipping} allows us to compare costs before and after reversing a certain subsequence, and it provides a unified explanation for many of the extremal constructions that arise. The Transcalation Lemma~\ref{lem:scaling} shows that a translation and scaling of a vector representing price tags preserve the relative ranking of loop costs.

In Section~\ref{sec:max}, we identify the permutations that maximize the loop, pairwise, and voucher costs and describe their structure, given that the price tags could be any set of distinct non-negative values. The voucher cost is more complicated to describe, and we define the \textit{min-to-min boundary-respecting construction} to help with that.

In Section~\ref{sec:min}, we identify the permutations that minimize the loop, pairwise, and voucher costs and describe their structure, given that the price tags can be any set of distinct non-negative values. We use the Flipping Lemma~\ref{lem:flipping}, the Transcalation Lemma~\ref{lem:scaling}, and the boundary-respecting constructions as the organizing tools. Our main contributions are to unify these three costs in a single framework, extend known extremal results from a set $\{1,\dots,n\}$ to arbitrary distinct non-negative price tags, and describe the maximizing and minimizing constructions in a common language.

Although the motivating problem is elementary to state, the resulting structure combines ideas from permutation statistics, extremal combinatorics, and greedy optimization. We hope that this paper highlights how a recreational puzzle can lead to a coherent and general mathematical theory.

In Section~\ref{sec:applications}, we discuss different applications of our results. In Section~\ref{sec:diffandranges}, we look at the achievable ranges and average values. In Sections~\ref{sec:arithmprog} and~\ref{sec:geometricprog}, we apply our formulas to integer arithmetic and geometric progressions. In Section~\ref{sec:neg}, we look at what happens if our price tags are negative. To do so, we generalize the voucher cost by allowing us to multiply the first voucher by a constant.

\section{Definitions and Preliminaries}
\label{sec:prelim}

\subsection{Definitions}

We denote the number of vouchers we buy by $n$, and the price tag of the $i$th voucher by $v_i$. The sequence of vouchers we buy becomes the vector $\vec{v}$:
\[v_1,\ v_2,\ v_3,\ \dots,\ v_n,\]
where the price tags are distinct non-negative real numbers. In the original puzzle, the price tags are integers $(1,2,3,\dots)$. We use parentheses for ordered sequences of price tags and braces for sets.

\begin{definition}
The total cost is denoted as $C_V(\vec{v})$ and is defined as
\begin{equation}
\label{cost}
    C_V(\vec{v}) = v_1 + v_1 v_2 + v_2 v_3 + v_3 v_4+ \cdots + v_{n-1} v_n,
\end{equation}
and we call it the \textit{voucher cost}. For voucher costs, we will also use the phantom coordinate $v_0 = 1$ to define the cost more homogeneously:
\[C_V(\vec{v}) = \sum_{i=1}^n v_{i-1} v_i.\]
\end{definition}

The original puzzle asks for the maximum number of vouchers under a fixed budget. We consider an equivalent question: the minimum cost, given the number of vouchers $n$. We denote this value as $m_V(n)$.

When buying $n$ vouchers and minimizing the voucher cost, it is natural to restrict ourselves to the first $n$ natural numbers. In this case, the sequence $\vec{v}$ becomes a permutation of length $n$. To emphasize that this is a permutation, we will sometimes denote it as $\pi$, and its elements as $p_i$.

The original puzzle did not ask about the maximum cost, as we cannot bound it, since the sequence of vouchers is infinite. When we restrict the $n$ purchased vouchers to be integers from $1$ to $n$, we can look at the maximum cost too, which we denote as $M_V(n)$.

In the original problem, the voucher prices form a sequence of natural numbers. Generalizing that, we sometimes assume that the sequence of voucher price tags is an increasing sequence of positive integers $s_i$.

For the minimum cost, as before, we need to buy the cheapest vouchers. When we are looking for the maximum cost, we also look at the maximum cost among the first cheapest vouchers. Thus, when we buy $n$ vouchers, we always buy among the first $n$ cheapest ones.

We are buying our $n$ vouchers $v_i$ in some order, which we want to define a relative order. We say that $\vec v$ is \textit{order-isomorphic} to $\pi=(p_1,\dots,p_n)$ if
\[
p_i < p_j \quad\Longleftrightarrow\quad v_i < v_j
\]
for all pairs $i,j$.

\begin{example}
The voucher sequences $(3,7,5)$ and $(2,8,4)$ are order-isomorphic to $\pi = (1,3,2)$.
\end{example}

The cost $C_V(\vec{v})$ is related to other costs that were studied before. Given the sequence of price tags, we introduce two additional cost types: the pairwise cost and the loop cost.

\begin{definition}
For a sequence of vouchers with at least two vouchers, we denote by $C_P(\vec{v})$ the \textit{pairwise cost}:
\[C_P(\vec{v}) = v_1 v_2 + v_2 v_3 + v_3 v_4+\cdots + v_{n-1} v_n.\] 
Similarly, we denote by $C_L(\vec{v})$ the \textit{loop cost}:
\[C_L(\vec{v}) = v_1 v_2 + v_2 v_3 + v_3 v_4+\cdots + v_{n-1} v_n + v_n v_1.\]
For $n=1$, these two costs are left undefined.
\end{definition}

The pairwise cost is similar to the voucher cost with the first term removed. The pairwise cost is the sum of pairwise products of consecutive elements of the given sequence of vouchers.

The loop cost is similar to the pairwise cost, but we assume that the numbers are on a loop, namely, we add the term $v_n v_1$ to the cost.

\begin{definition}
For the pairwise cost, we call two sequences \emph{equivalent} if they are reflections of each other. For the loop cost, we call two sequences \emph{equivalent} if one can be obtained from the other using reflections and rotations. For the voucher cost, two sequences are \emph{equivalent} if the subsequence of terms starting with $v_1$ and ending right before the $v_i = 1$ in one sequence is a reflection of the prefix of the other sequence, with the rest of the terms being the same. The last case is only relevant when one of the price tags is $1$.
\end{definition}

\begin{proposition}
For each cost definition, equivalent sequences have the same cost.
\end{proposition}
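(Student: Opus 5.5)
The plan is to treat each cost separately and, in each case, reduce the claim to the invariance of a multiset of unordered adjacent pairs. The pairwise and loop costs are symmetric sums of products over adjacent pairs, so only that multiset matters.

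\emph{Pairwise cost.} Write $C_P(\vec v)=\sum_{i=1}^{n-1} v_i v_{i+1}$ and let $\vec w$ be the reflection, $w_i=v_{n+1-i}$. Then
\[C_P(\vec w)=\sum_{i=1}^{n-1} v_{n+1-i}v_{n-i}=\sum_{j=1}^{n-1} v_{j+1}v_j,\]
using the substitution $j=n-i$. This equals $C_P(\vec v)$ because multiplication is commutative.

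\emph{Loop cost.} Take indices modulo $n$, so that $C_L(\vec v)=\sum_{i\in\mathbb{Z}_n} v_i v_{i+1}$. A rotation $w_i=v_{i+k}$ merely shifts the summation index over $\mathbb{Z}_n$. A reflection $w_i=v_{-i}$ gives the terms $v_{-i}v_{-i-1}$; reindexing with $j=-i-1$ and using commutativity recovers the original sum. Every equivalence is a composition of rotations and reflections, so it suffices to check these two generators.

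\emph{Voucher cost.} Use the phantom coordinate $v_0=1$ and write $C_V(\vec v)=\sum_{i=1}^n v_{i-1}v_i$. Suppose $v_k=1$ and $\vec w$ agrees with $\vec v$ except that the prefix $v_1,\dots,v_{k-1}$ is reversed. We have not fully settled whether the definition means the prefix before the $1$, with the $1$ kept in place; we take that reading, and it is the one the paper's wording suggests. Split $C_V$ into three parts: the block $v_0v_1+\dots+v_{k-2}v_{k-1}+v_{k-1}v_k$, and the tail $\sum_{i>k} v_{i-1}v_i$, which is unchanged.

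Because $v_0=v_k=1$, the block equals $v_1+v_{k-1}+\sum_{i=1}^{k-2}v_iv_{i+1}$. This is the pairwise cost of the prefix plus its two endpoints. Both quantities are invariant under reversal of the prefix: the endpoint sum is symmetric, and the pairwise cost is invariant by the first step. The degenerate cases $k=1$ (empty prefix) and $k=2$ (a one-element prefix) are trivial, since reversal changes nothing.

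The only real care needed is this voucher case. In particular, it hinges on the fact that the $1$ plays the same role as the phantom $v_0=1$, so the segment $v_0,v_1,\dots,v_k$ has equal endpoints and reversing its interior is a pairwise-cost reflection. If instead the reflected segment were meant to include the $1$ itself, the same computation shows the boundary terms would not match in general. That confirms the reading adopted above.
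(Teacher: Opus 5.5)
Your proof is correct and is the direct calculation the paper has in mind; the paper's proof is only the line ``by direct calculation,'' and your observation that $v_0=v_k=1$ turns the voucher-prefix reversal into a pairwise reflection supplies that calculation. Your reading of the voucher equivalence, with the $1$ kept in place, matches the paper's example $(3,2,1,4)$ versus $(2,3,1,4)$.
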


\begin{proof}
    The proof is by direct calculation.
\end{proof}

If the price tags are distinct, then for loop costs, when $n>2$, each equivalence class has $2n$ sequences. For pairwise costs, when $n>1$, each equivalence class has $2$ sequences. For the voucher cost when there is no price tag of $1$, the number is $1$. For the voucher cost, when one price tag is $1$ and $n>2$, we have two cases: a) a sequence with the first or second term equal to $1$ is only equivalent to itself; b) otherwise, it is equivalent to one other sequence.

\begin{example}
The voucher cost for the sequence $(3, 2, 1, 4)$ is the same as for the sequence $(2, 3, 1, 4)$.
\end{example}

\begin{example}
\label{ex:cv4}
    For $n=4$, we have three different equivalence classes for loop costs, and $8$ permutations for each cost. Given the sequence of natural numbers, permutations equivalent to $(1,2,4,3)$ have cost 25, those equivalent to $(1,2,3,4)$ have cost 24, and the ones equivalent to $(1,3,2,4)$ have cost 21.
\end{example}

Given two vectors $\vec{u}$ and $\vec{v}$ of lengths $\ell_1$ and $\ell_2$, respectively, we denote their concatenation as $(\vec{u},\vec{v})$, which has length $\ell_1 + \ell_2$.

The following proposition follows from an immediate calculation. It allows us to connect different costs.

\begin{proposition}
\label{prop:add0}
    We have
\[C_P(\vec{v}) = C_P((\vec{v},0)) = C_L((\vec{v},0)) = C_P((0,\vec{v})) = C_L((0,\vec{v})) = C_V((0,\vec{v})).\] 
    We also have 
    \[C_P((v_0,\vec{v},v_0)) = C_L((v_0,\vec{v})) \quad \textrm{ and } \quad C_V(\vec{v}) = C_P((1,\vec{v})) .\] 
\end{proposition}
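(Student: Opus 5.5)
The plan is to verify each equality in Proposition~\ref{prop:add0} by writing out the defining sums and comparing them term by term. The only facts used are that a product with a zero factor vanishes and that the phantom coordinate $v_0=1$ enters the voucher cost only through its first term. Throughout, $\vec v=(v_1,\dots,v_n)$ with $n\ge 2$, so that $C_P(\vec v)$ is defined. Appending or prepending an entry gives a vector of length $n+1\ge 3$, so every cost that appears is defined.

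For the first chain of equalities I would handle each vector in turn.
\begin{itemize}
\item $C_P((\vec v,0))$ equals $C_P(\vec v)+v_n\cdot 0=C_P(\vec v)$.
\item $C_L((\vec v,0))$ adds one more wrap-around term $0\cdot v_1=0$ to that, so it is also $C_P(\vec v)$.
\item $C_P((0,\vec v))$ equals $0\cdot v_1+C_P(\vec v)=C_P(\vec v)$.
\item $C_L((0,\vec v))$ adds the wrap-around term $v_n\cdot 0=0$, so it is again $C_P(\vec v)$.
\item For the voucher cost of $(0,\vec v)$, the homogeneous form $\sum v_{i-1}v_i$ with phantom $1$ gives $1\cdot 0+0\cdot v_1+C_P(\vec v)=C_P(\vec v)$.
\end{itemize}

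For the second display, write $w=(v_0,v_1,\dots,v_n,v_0)$. Its pairwise cost is
\[v_0v_1+v_1v_2+\cdots+v_{n-1}v_n+v_nv_0,\]
which is exactly the loop cost of $(v_0,v_1,\dots,v_n)$: the consecutive products plus the closing term $v_nv_0$. Finally,
\[C_P((1,\vec v))=1\cdot v_1+v_1v_2+\cdots+v_{n-1}v_n,\]
which is the definition of $C_V(\vec v)$, that is, the homogeneous form with $v_0=1$. This last identity also holds for $n=1$, since both sides equal $v_1$.

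There is no real obstacle here. The only care needed is bookkeeping: keep the phantom coordinate of $C_V$ distinct from an actual entry $0$, and check that every vector has length at least two so that each cost is defined. Note also that the distinct-entry hypothesis is not used, even though inserting $0$ or repeating $v_0$ may violate it. The identities are purely algebraic and hold for arbitrary real entries.
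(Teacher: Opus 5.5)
Your proposal is correct and takes the same approach as the paper, which proves this proposition by substituting into the definitions and leaves the details to the reader. Your term-by-term check, including the care about the phantom coordinate and the lengths of the vectors, supplies exactly those details.
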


\begin{proof}
The proof is done by plugging the values in, and we leave it to the reader.
\end{proof}

\subsection{Natural numbers}

\subsubsection{Maximum}

If the available price tags form a sequence of natural numbers, then the set of $n$ vouchers we buy is a permutation of $\{1,\dots,n\}$. In this case, the maximum and minimum possible values of the pairwise cost, given the length of the permutation, are known \cite{Klosinski, Mihai}. We denote these two values as $M_P(n)$ and $m_P(n)$, respectively. Similarly, the maximum and minimum values for the loop cost are known as well. We denote them as $M_L(n)$ and $m_L(n)$, respectively.

The maximum for the loop cost $M_L(n)$ is provided by the OEIS sequence A110610. The first few terms, starting with $n=2$, are:
\[4,\ 11,\ 25,\ 48,\ 82,\ 129,\ 191,\ 270,\ 368,\ 487,\ 629.\]

The formula for $M_L(n)$ is
\[M_L(n) = \frac{2n^3+3n^2-11n+18}{6}.\]

The maximum pairwise cost $M_P(n)$ is provided by the OEIS sequence A101986$(n-1)$. The first few terms, starting with $n=2$, are:
\[2,\ 9,\ 23,\ 46,\ 80,\ 127,\ 189,\ 268,\ 366,\ 485,\ 627.\]

Here is the procedure for finding all permutations that maximize the loop and pairwise costs. 

\textbf{Generating permutations for maximum loop and pairwise costs}: Put all the even numbers in increasing order, then put all the odd numbers in decreasing order.

\begin{example}
When $n=8$, the permutation realizing the maximum loop cost of 191 is
\[2,\ 4,\ 6,\ 8,\ 7,\ 5,\ 3,\ 1\]
and its rotations and reflections. The same permutation and its reflection realize the maximum pairwise cost of 189.
\end{example}

The following proposition is known \cite{Benoumhani, OEIS}.

\begin{proposition}
The construction above provides the maximum for both the loop and the pairwise cost. The maximum is achieved only for the permutations equivalent to the one described above. Also,
\[M_P(n) = M_L(n) - 2.\]
\end{proposition}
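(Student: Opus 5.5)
We will treat the loop cost first and then obtain the pairwise statement from it. The strategy is an exchange argument on the cycle, with the claimed arrangement identified as the unique local optimum.

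\textbf{Step 1: the exchange move.} Take a cyclic arrangement and two non-adjacent edges $\{a,b\}$ and $\{c,d\}$, traversed in the order $a,b,\dots,c,d$. Reversing the arc from $b$ to $c$ replaces the edges $ab+cd$ by $ac+bd$. The change in loop cost is
\[(ac+bd)-(ab+cd)=(a-d)(c-b).\]
Hence a maximizer admits no such pair with $(a-d)(c-b)>0$. Any pair with $(a-d)(c-b)=0$ forces a repeated value, which distinct entries rule out. So every reversal of a maximizer strictly lowers the cost.

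\textbf{Step 2: building the maximizer from the top.} We argue by induction on $n$, removing the smallest element. If $1$ sits between $x$ and $y$ in a cycle on $\{1,\dots,n\}$, deleting it changes the cost by $-(x+y)+xy$. This reduces the problem to showing two things:
\begin{enumerate}
\item in a maximizer, $1$ is adjacent to $2$ and $3$;
\item the remaining cycle, with the edge $\{2,3\}$ added, is a maximizer on $\{2,\dots,n\}$.
\end{enumerate}
Shifting all entries by $1$ changes the loop cost by $2\sum v_i+n$, which is a constant. So the second point reduces to the statement for $n-1$.

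To prove the first point, apply Step~1. Suppose $1$ has a neighbour $x\ge 4$. Let $1$ sit as $\dots,x,1,y,\dots$ and pick a suitable edge $\{c,d\}$ elsewhere, for example one containing $2$ or $3$. One then checks that some reversal has $(a-d)(c-b)>0$.

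Symmetrically, the largest element $n$ must be adjacent to $n-1$ and $n-2$. Iterating these facts forces the structure in which the evens increase and the odds decrease around the cycle. Uniqueness up to rotation and reflection follows because every step of this derivation is forced.

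\textbf{Step 3: the pairwise cost.} The pairwise cost equals the loop cost minus one cyclic edge $v_nv_1$. So $M_P(n)\ge M_L(n)-\min(\text{edge of the max loop})$. In the max loop the smallest edge is $1\cdot 2=2$, so cutting it gives $M_P(n)\ge M_L(n)-2$.

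For the reverse inequality, let $\pi$ be any linear arrangement and close it into a cycle. The added edge joins the two endpoints, which are distinct, so it is at least $1\cdot 2$ when the endpoints are $\{1,2\}$. In general we need $C_L(\pi)\le M_L(n)$ with equality only for the special cycle. Any cycle whose closing edge $v_1v_n$ exceeds $2$ needs separate treatment.

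Write $C_P(\pi)=C_L(\pi)-v_1v_n$. If the closing edge is $\{1,2\}$, then $C_P\le M_L-2$, with equality only when the cycle is optimal. Otherwise $v_1v_n\ge 3$, and we need $C_L(\pi)\le M_L(n)+v_1v_n-3$. This requires a sharper bound: the second-best loop cost should fall short of $M_L$ by enough.

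A cleaner route is Proposition~\ref{prop:add0}: $C_P(\vec v)=C_L((0,\vec v))$. Apply the loop result to the set $\{0,1,\dots,n\}$; shifting by one gives the same extremal shape. The unique maximizer places $0$ between $1$ and $2$, and deleting $0$ yields the evens-then-odds arrangement on $\{1,\dots,n\}$. Its pairwise cost is $M_L(n)-2$ by direct computation.

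\textbf{Main obstacle.} The hard part is Step~2's first point, choosing the right reversal to show that $1$ must neighbour $2$ and $3$. I would first try pairing the edge $\{x,1\}$ with the edge leaving $3$ (or $2$) on the far side, and then do a short case check on the relative order.
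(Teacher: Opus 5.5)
There is a genuine gap at the first point of your Step 2: the claim that in every maximizer $1$ is adjacent to both $2$ and $3$. The whole loop argument rests on it, including the induction, the uniqueness, and the evens-up/odds-down shape. Yet you only assert that some reversal works. The remark that $n$ is symmetrically adjacent to $n-1$ and $n-2$, followed by iterating these facts, does not replace it.

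You had already written down the identity that closes the gap. Suppose $1$ sits between $x$ and $y$, and let $\vec w$ be the cycle on $\{2,\dots,n\}$ obtained by deleting $1$. Then $C_L(\vec v)=C_L(\vec w)+1-(x-1)(y-1)$. We have $(x-1)(y-1)\ge 2$, with equality iff $\{x,y\}=\{2,3\}$. By the shifted induction hypothesis, the unique maximizer on $\{2,\dots,n\}$ contains the edge $\{2,3\}$. Together these give $M_L(n)=M_L(\{2,\dots,n\})-1$, both points of Step 2, and uniqueness at once.

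The exchange case check you had in mind also works:
\begin{itemize}
\item If $1$ and $2$ are not adjacent, write the cycle as $1,b,\dots,2,d,\dots$ and reverse $b,\dots,2$, for a gain of $(1-d)(2-b)>0$.
\item If $1$ has neighbours $2$ and $x\ge 4$, and $2$'s other neighbour is $3$, reverse the block $1,2$, for a gain of $x-3>0$.
\item Otherwise $2$'s other neighbour is some $y\ge 4$. Reversing $y,\dots,3$ gains $(2-d)(3-y)>0$, where $d\ge 4$ is the neighbour of $3$ beyond it.
\end{itemize}
This is essentially the paper's proof of Theorem~\ref{thm:maxloop}, which contains the proposition as the case $s=(1,\dots,n)$. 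That proof grows the chain $s_1,\dots,s_k$ from the bottom and uses the Flipping Lemma to force $s_{k+1}$ to attach at the chain's smaller end.

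In Step 3, your first attempt stalls on a non-issue: no estimate on the second-best loop is needed. The endpoints of a linear arrangement are distinct, so $v_1v_n\ge 1\cdot 2$. Hence $C_P(\pi)=C_L(\pi)-v_1v_n\le M_L(n)-2$, with equality iff $C_L(\pi)=M_L(n)$ and $\{v_1,v_n\}=\{1,2\}$. This is exactly the paper's argument, and it gives uniqueness up to reflection directly.

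Your alternative via $C_P(\vec v)=C_L((0,\vec v))$ and the shifted loop result on $\{0,\dots,n\}$ is also valid. It is the paper's Remark after Theorem~\ref{thm:maxloop}, through Corollary~\ref{cor:FromLoopToPairwise}. However, both routes depend on the loop result, so they stand only once the gap above is filled.
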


\subsubsection{Minimum}

The minimum for the loop cost $m_L(n)$ is provided by the OEIS sequence A110611. The first few terms, starting with $n=2$, are
\[4,\ 11,\ 21,\ 37,\ 58,\ 87,\ 123,\ 169,\ 224,\ 291,\ 369,\ 461.\]

The formula for $m_L(n)$ is the following
\begin{equation}
\label{eq:loopmin}
m_L(n) = \frac{n^3+3n^2+5n}{6} - \frac{9 + 3(-1)^n}{12}.
\end{equation}

We can write the same formula using cases:
\[m_L(n)=
\begin{cases}
\frac{n^3 + 3n^2 + 5n - 6}{6} & \text{if } n \text{ is even} \\
\frac{n^3 + 3n^2 + 5n - 3}{6} & \text{if } n \text{ is odd.}
\end{cases}\]

The minimum value of the pairwise cost $m_P(n)$ is given by sequence A026035 in the OEIS \cite{OEIS} database. 
Keep in mind that this sequence starts from index 2, and the first few terms are
\[2,\ 5,\ 12,\ 22,\ 38,\ 59,\ 88,\ 124,\ 170,\ 225,\ 292,\ 370,\ 462,\ 567.\]

The formula for $m_P(n)$ is the following:
\[m_P(n) = \frac{2n^3 + 4n - 3 + 3(-1)^n}{12} = \binom{n}{3} + \floor{\frac{n^2}{2}}.\]

To parallel the maximum case, we look at the difference. The formula for the difference between the minimum loop and pairwise cost is as follows:
\[m_L(n) - m_P(n) = \frac{n^2 + n - 1 - (-1)^n}{2}=\begin{cases}
\frac{n(n+1)}{2} & \text{if } n \text{ is odd} \\
\frac{(n+2)(n-1)}{2} & \text{if } n \text{ is even}
\end{cases}.\]

\textbf{Generating permutations for minimum loop cost}: Put the largest number first and the smallest number last. Then put the second smallest number second, and the second largest number second-to-last. Repeat the process to get
\[n,\ 2,\ n-2,\ 4,\ \dots,\ n-3,\ 3,\ n-1,\ 1.\]
Together with its rotations and reflections, this gives all minimizing permutations.

\begin{example}
When $n=8$, the permutations realizing the minimum loop cost of 123 are
\[8,\ 2,\ 6,\ 4,\ 5,\ 3,\ 7,\ 1\]
and its rotations and reflections.
\end{example}

\textbf{Generating permutations for minimum pairwise cost}: Put the odd numbers in decreasing order, and the even numbers in increasing order. Then, interlace these two sequences starting with the largest odd number. If $n$ is odd, we get the following sequence:
\[n,\ 2,\ n-2,\ 4,\ \dots,\ n-3,\ 3,\ n-1,\ 1.\]
which is the same as our sequence above for the minimal loop cost. For even $n$, we get the following sequence:
\[n-1,\ 2,\ n-3,\ 4,\ \dots,\ n-4,\ 3,\ n-2,\ 1,\ n.\]
Its reverse also attains the same value.

\begin{example}
When $n=8$, the permutations realizing the minimum pairwise cost of 88 are
\[7,\ 2,\ 5,\ 4,\ 3,\ 6,\ 1,\ 8\]
and
\[8,\ 1,\ 6,\ 3,\ 4,\ 5,\ 2,\ 7.\] 
\end{example}

\subsubsection{Maximum minus minimum}

The difference $M_L(n) - m_L(n)$ is sequence A306262 in the OEIS database \cite{OEIS} shifted by 1: $M_L(n) - m_L(n) = \textrm{A306262}(n-1)$. The first few terms of $M_L(n) - m_L(n)$ starting from index 2 are:
\[0,\ 0,\ 4,\ 11,\ 24,\ 42,\ 68,\ 101,\ 144,\ 196,\ 260.\]

The formula for the difference between the maximum loop and minimum loop cost is as follows:
\[M_L(n) - m_L(n) = \frac{2n^3 -32n+45+3(-1)^n}{12}=\begin{cases}
\frac{2n^3 -32n+42}{12} & \text{if } n \text{ is odd} \\
\frac{2n^3 -32n+48}{12} & \text{if } n \text{ is even}
\end{cases}.\]

The difference $M_P(n) - m_P(n)$ is provided by the same sequence A306262, but now it is not shifted. Here are a few terms starting from index 2:
\[0,\ 4,\ 11,\ 24,\ 42,\ 68,\ 101,\ 144,\ 196,\ 260,\ 335,\ 424.\]
We have
\[M_P(n) - m_P(n) = \frac{2n^3+6n^2-26n+15-3(-1)^n}{12}.\]

\section{The Flipping Lemma}
\label{sec:flippingtranscaling}

Recall that when we calculate the voucher cost, we assume that $v_0 = 1$.

We state the following lemma in a more general form. It applies to arbitrary real price tags, not necessarily distinct.

\begin{lemma}[The Flipping Lemma]
\label{lem:flipping}
    Suppose we have a sequence of price tags $v_1,v_2,\dots,v_n$, where the price tags are allowed to be any real numbers, not necessarily distinct. If $i < j - 1$ and
    \[(v_i-v_j)(v_{i+1}-v_{j-1}) < 0,\]
    if we reverse the subsequence starting from $v_{i+1}$ through $v_{j-1}$, then the new loop, pairwise, and voucher costs are greater than the original ones. Otherwise, if $(v_i-v_j)(v_{i+1}-v_{j-1}) > 0$, the costs are smaller, and if $(v_i-v_j)(v_{i+1}-v_{j-1}) = 0$, the costs remain unchanged.
\end{lemma}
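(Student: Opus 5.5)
Reversing the block $v_{i+1},\dots,v_{j-1}$ leaves every internal adjacent product unchanged, since the block contains the same unordered adjacent pairs read backwards. The only terms that change are the two products at the boundaries of the block. Before the flip these are $v_iv_{i+1}+v_{j-1}v_j$; after the flip they are $v_iv_{j-1}+v_{i+1}v_j$. All other terms of the cost agree, including $v_nv_1$ for the loop cost and $v_0v_1$ for the voucher cost, because positions outside the block are untouched. The plan is therefore to compute the difference new minus old and factor it:
\[
v_iv_{j-1}+v_{i+1}v_j-v_iv_{i+1}-v_{j-1}v_j=-(v_i-v_j)(v_{i+1}-v_{j-1}).
\]
To check this, expand $(v_i-v_j)(v_{i+1}-v_{j-1})=v_iv_{i+1}-v_iv_{j-1}-v_jv_{i+1}+v_jv_{j-1}$, which is exactly the old boundary terms minus the new ones.

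From this identity the three cases of Lemma~\ref{lem:flipping} follow at once. If the product is negative, the cost increases; if it is positive, the cost decreases; if it is zero, the cost is unchanged.

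The one point needing care is which indices are allowed, so that both $v_i$ and $v_j$ exist and appear in the relevant cost.

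For the pairwise cost we need $1\le i$ and $j\le n$. Then both boundary products $v_iv_{i+1}$ and $v_{j-1}v_j$ are genuine terms, and the hypothesis $i<j-1$ guarantees the block is nonempty.

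For the voucher cost we also allow $i=0$ with the phantom coordinate $v_0=1$. By Proposition~\ref{prop:add0}, $C_V(\vec v)=C_P((1,\vec v))$, so the voucher case reduces to the pairwise case applied to the sequence $(1,v_1,\dots,v_n)$.

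For the loop cost the same computation applies. One could also allow cyclic indices, but with $1\le i<j-1\le n-1$ the term $v_nv_1$ is untouched, so nothing changes. The only degenerate situation would be a block that is the whole cycle minus one element, where the two "boundary" edges are the same pair of positions and the flip is just a reflection. That situation is excluded, or trivially consistent, under the stated index range.

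The main obstacle is only bookkeeping of these boundary conventions, since the factorization itself is routine. I would note explicitly that when $i+1=j-1$ the block has length one, so the flip does nothing. The formula is consistent with this, because then $v_{i+1}-v_{j-1}=0$.
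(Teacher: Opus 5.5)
Your proposal is correct and follows the paper's proof: both note that reversing $v_{i+1},\dots,v_{j-1}$ changes only the two boundary products, and that the old-minus-new difference factors as $(v_i-v_j)(v_{i+1}-v_{j-1})$. Your extra bookkeeping makes explicit conventions that the paper uses only implicitly in later applications: the phantom $v_0=1$ via $C_V(\vec v)=C_P((1,\vec v))$, cyclic indices for the loop, and the length-one block.
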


\begin{proof}
Note that after reversing the subsequence starting from $v_{i+1}$ through $v_{j-1}$, the new sequence is
\[v_1,\ v_2,\ \ldots,\ v_{i-1},\ v_i,\ v_{j-1},\ v_{j-2},\ \ldots,\ v_{i+1},\ v_j,\ v_{j+1},\ \ldots,\ v_n.\]
Subtracting the new cost value from the original loop cost (similarly for pairwise and voucher costs), we see that the difference affects two terms only, and we get that the difference is
\[v_iv_{i+1} + v_{j-1}v_j - v_iv_{j-1} - v_{i+1}v_j = (v_i-v_j)(v_{i+1}-v_{j-1}),\]
implying the stated result.
\end{proof}

We denote a constant vector of length $n$, where each coordinate is $c$ by $\vec{c}$. Given vector $\vec{v} = (v_1,v_2,\ldots,v_n)$, the vector $a\vec{v} + \vec{c}$ equals $(av_1+c,av_2+c,\ldots,av_n+c)$.

The following lemma describes how the relative orders of loop costs of two permutations of $\vec{v}$ and $a\vec{v} + \vec{c}$ are connected. But first, we introduce a notion of rank.

Suppose the loop cost (similarly, pairwise and voucher costs) achieves a set $A$ of costs on permutations of $n$ elements. Suppose we sort all possible costs in order $A = \{a_1, a_2, \dots, a_k\}$, where $a_i$ are distinct and listed in increasing order. We say that the \textit{rank} of cost $a_i$ is $i$. For example, the rank of the minimum cost is 1.

The following lemma explains that the ranking order does not change with translations and scaling.

\begin{lemma}[Transcalation Lemma]
\label{lem:scaling}
    The loop costs of vectors $\vec{v}$ and $a\vec{v} + \vec{c}$ have the same ranks among all permutations of vectors $\vec{v}$ and $a\vec{v} + \vec{c}$, respectively, where we assume that $a \ne 0$.
\end{lemma}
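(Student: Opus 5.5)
The plan is to compute directly how the loop cost of a permuted vector changes under the map $\vec v\mapsto a\vec v+\vec c$. The claim is that for a fixed arrangement this change is an affine function of the original loop cost, with coefficients that depend only on the multiset of entries and not on the order. Fix a permutation $\sigma$ of the indices and write $w_k = v_{\sigma(k)}$, with indices read cyclically so that $w_{n+1}=w_1$. Each coordinate of $a\vec v+\vec c$ in the same arrangement is $a w_k + c$. Expanding a single loop term gives
\[
(aw_k+c)(aw_{k+1}+c) = a^2 w_kw_{k+1} + ac\,(w_k+w_{k+1}) + c^2 .
\]
Summing over $k=1,\dots,n$ around the loop, each $w_k$ appears in exactly two adjacent pairs. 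Therefore
\[
C_L(a\sigma(\vec v)+\vec c) = a^2\, C_L(\sigma(\vec v)) + 2ac\, S + nc^2, \qquad S=\sum_{k=1}^n v_k .
\]
Here $S$ and $n$ do not depend on $\sigma$.

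Next, the map $x\mapsto a^2x+2acS+nc^2$ is strictly increasing because $a\neq 0$ gives $a^2>0$. It is also a bijection between the set $A$ of loop costs achieved by permutations of $\vec v$ and the set $A'$ of loop costs achieved by permutations of $a\vec v+\vec c$. Permutations of $a\vec v+\vec c$ correspond exactly to permutations of $\vec v$ via the same $\sigma$, so the map is onto $A'$. It is injective since it is strictly increasing. A strictly increasing bijection between finite ordered sets preserves positions in sorted order, so the $i$th smallest element of $A$ goes to the $i$th smallest element of $A'$. Hence the cost of $\sigma(\vec v)$ and the cost of $\sigma(a\vec v+\vec c)$ have the same rank.

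The only step needing care is the count of how often each entry appears in the linear term. This is exactly where the loop cost differs from the pairwise and voucher costs. In the pairwise cost the endpoints appear once rather than twice, so the linear term becomes $ac(2S - w_1 - w_n)$. That depends on the arrangement, and the argument does not transfer without change. For the loop it is cyclic symmetry that makes the linear term constant, so I will state that step explicitly. I will also remark that the sign of $a$ is irrelevant, since only $a^2$ multiplies the cost. Finally, I will note that if some values coincide, or if different permutations give equal costs, nothing breaks, because ranks are defined on the set of distinct values.
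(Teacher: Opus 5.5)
Your proposal is correct and takes essentially the same approach as the paper: expand each loop term and sum to get $C_L(a\vec v+\vec c)=a^2C_L(\vec v)+2ac\sum v_i+nc^2$, whose linear term does not depend on the arrangement, then observe that this affine map has positive leading coefficient $a^2$ and so preserves ranks. Your extra points are accurate details the paper leaves implicit: you check that the map sends the set of costs onto the set of costs, and you note that the pairwise analogue fails because of the arrangement-dependent term $ac(2S-w_1-w_n)$.
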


\begin{proof}
    To compare the costs, it is enough to compare each term. We have
    \[(av_i+c)(av_{i+1} + c) = a^2v_iv_{i+1} + ac(v_i + v_{i+1}) +c^2.\]
    Hence
    \[C_L(a\vec{v} + \vec{c}) = a^2C_L(\vec{v}) + 2ac\left(\sum_{i=1}^n v_i\right) +nc^2.\]
    Thus, the transition from cost to cost is a linear function with a positive first coefficient. Therefore, this transition is a bijection on the sets of costs preserving the order. The statement follows.
\end{proof}

\begin{corollary}
    Suppose $a \ne 0$, and a permutation $\vec{v}$ of a sequence of price tags $s$ provides the minimum/maximum loop cost. Then the permutation $a\vec{v} + \vec{c}$ provides the minimum/maximum loop cost for a sequence of price tags $(as_1+c,\dots,as_n+c)$.
\end{corollary}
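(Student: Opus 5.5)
The corollary follows from the identity established in the proof of the Transcalation Lemma~\ref{lem:scaling}. The plan is to set up the bijection between permutations carefully and then read off that extremality is preserved.

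Let $s=(s_1,\dots,s_n)$ and $s'=(as_1+c,\dots,as_n+c)$. For a permutation $\sigma$ of the indices, write $\vec{v}_\sigma=(s_{\sigma(1)},\dots,s_{\sigma(n)})$. Applying the map $x\mapsto ax+c$ coordinatewise sends $\vec{v}_\sigma$ to $a\vec{v}_\sigma+\vec{c}=(s'_{\sigma(1)},\dots,s'_{\sigma(n)})$. Since $a\ne 0$, this map is injective, so it gives a bijection between arrangements of $s$ and arrangements of $s'$. Every arrangement of $s'$ therefore has the form $a\vec{w}+\vec{c}$ for a unique arrangement $\vec{w}$ of $s$.

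Next we use the identity from the proof of Lemma~\ref{lem:scaling}:
\[C_L(a\vec{w}+\vec{c}) = a^2C_L(\vec{w}) + 2ac\sum_{i=1}^n w_i + nc^2.\]
The sum $\sum_i w_i=\sum_i s_i$ is the same for every arrangement $\vec{w}$ of $s$. Hence the map $x\mapsto a^2x+2ac\sum_i s_i+nc^2$ is a single affine function, applied to all arrangements at once, and it is strictly increasing because $a^2>0$.

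Now suppose $C_L(\vec{v})\le C_L(\vec{w})$ for every arrangement $\vec{w}$ of $s$. Applying the increasing function gives $C_L(a\vec{v}+\vec{c})\le C_L(a\vec{w}+\vec{c})$ for every $\vec{w}$. By the bijection, this covers every arrangement of $s'$, so $a\vec{v}+\vec{c}$ attains the minimum loop cost. The maximum case is identical with the inequalities reversed. Equivalently, Lemma~\ref{lem:scaling} says the ranks agree, and the minimum and maximum are the ranks $1$ and $k$.

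The only point needing care is that the additive term $2ac\sum_i w_i$ must not depend on the arrangement, which holds because it is a symmetric function of the entries. There is one caveat: if $a<0$, the order of the entries is reversed, but the costs are still monotonically related because the scaling enters squared. Nothing else is an obstacle.
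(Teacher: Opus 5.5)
Your proof is correct and follows the paper's route: the paper treats the corollary as an immediate consequence of the Transcalation Lemma~\ref{lem:scaling}, whose proof rests on the same identity $C_L(a\vec{w}+\vec{c}) = a^2C_L(\vec{w}) + 2ac\sum_i w_i + nc^2$ with positive leading coefficient $a^2$. Your explicit bijection between arrangements of $s$ and of $(as_1+c,\dots,as_n+c)$ makes precise a step the paper leaves implicit.
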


Adding a constant allows us to connect the loop cost for a set of $n$ vouchers, with the last voucher having the smallest price, to the pairwise cost for a set of $n-1$ vouchers.

\begin{corollary}
    If a permutation $v_1,v_2,\ldots,v_{n-1},s_1$ of an increasing sequence $s_1,s_2,\ldots,s_n$ provides the loop cost of rank $r$ among all vectors with the last value $s_1$, then the permutation
    \[v_1 - s_1, v_2 - s_1, \ldots, v_{n-1} - s_1\]
    provides the pairwise cost of the same rank among all vectors with values $v_1 - s_1, v_2 - s_1, \ldots, v_{n-1} - s_1$.
\end{corollary}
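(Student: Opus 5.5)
The plan is to reduce the statement to the Transcalation Lemma~\ref{lem:scaling} with $a=1$ and $c=-s_1$, and then use Proposition~\ref{prop:add0} to turn a loop cost with a trailing zero into a pairwise cost.

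\textbf{Step 1: translate.} Take any arrangement $\vec{u}=(u_1,\dots,u_{n-1},s_1)$ of the set $\{s_1,\dots,s_n\}$ whose last entry is $s_1$. The formula from the proof of Lemma~\ref{lem:scaling}, with $a=1$ and $c=-s_1$, gives
\[C_L(\vec{u}-\vec{s_1}) = C_L(\vec{u}) - 2s_1\sum_{k=1}^n s_k + ns_1^2 .\]
The correction term depends only on the underlying set, so it is the same for every arrangement. Hence the map $C_L(\vec{u})\mapsto C_L(\vec{u}-\vec{s_1})$ is a fixed increasing affine function. Note that the lemma is stated for ranks over all permutations, but this formula is pointwise. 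So the same map also preserves order on the restricted family of arrangements ending in $s_1$, and therefore preserves ranks within that family.

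\textbf{Step 2: identify the restricted family with pairwise arrangements.} The shifted vector is $\vec{u}-\vec{s_1}=(u_1-s_1,\dots,u_{n-1}-s_1,0)$. By Proposition~\ref{prop:add0},
\[C_L((\vec{w},0)) = C_P(\vec{w}), \quad \text{where } \vec{w}=(u_1-s_1,\dots,u_{n-1}-s_1).\]
As $\vec{u}$ ranges over arrangements ending in $s_1$, the vector $\vec{w}$ ranges bijectively over all permutations of the multiset $\{s_2-s_1,\dots,s_n-s_1\}$. This is exactly the set of values $v_1-s_1,\dots,v_{n-1}-s_1$ in the statement.

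\textbf{Step 3: compose.} Combining the two steps, the map
\[C_L(\vec{u}) \longmapsto C_P(\vec{w}) = C_L(\vec{u}) - 2s_1\sum_{k=1}^n s_k + ns_1^2\]
is a strictly increasing bijection. It goes from the set of loop costs attained on arrangements ending in $s_1$ onto the set of pairwise costs attained on permutations of the shifted values. Such a bijection sends the $r$-th smallest value to the $r$-th smallest value, which gives equality of ranks.

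The main obstacle is Step 1. Lemma~\ref{lem:scaling} as stated ranks costs among all permutations, whereas here we need ranks within the subfamily ending in $s_1$. The fix is to invoke the explicit affine formula from its proof rather than the lemma's statement, since that formula holds arrangement by arrangement. The rest is bookkeeping that the correspondence $\vec{u}\leftrightarrow\vec{w}$ is bijective.
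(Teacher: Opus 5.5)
Your proof is correct and follows the same route as the paper: translate by $-s_1$ using the affine formula behind the Transcalation Lemma~\ref{lem:scaling}, then drop the trailing zero via $C_L((\vec{w},0)) = C_P(\vec{w})$ from Proposition~\ref{prop:add0}. You handle your concern about ranks within the subfamily ending in $s_1$ correctly, but it is not actually an obstacle: every arrangement can be rotated to end in $s_1$, so that subfamily attains exactly the same set of loop costs as all permutations, and the lemma applies verbatim.
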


\begin{proof}
    Consider a loop cost for vector $v_1,v_2,\ldots,v_{n-1},s_1$, and subtract $s_1$ from each value. We get a vector
    \[v_1-s_1,v_2-s_1,\ldots,v_{n-1}-s_1,0.\]
    The loop cost of this vector is the same as the pairwise cost of the same vector with the last 0 removed. By the Transcalation Lemma~\ref{lem:scaling}, this is an order-preserving bijection between the sets of loop costs for permutations of $\vec{v}$ ending in $s_1$ and loop costs of permutations of the vector $(v_1-s_1,v_2-s_1,\ldots,v_{n-1}-s_1,0)$. The latter costs are in bijection with pairwise costs of permutations of $v_1-s_1,v_2-s_1,\ldots,v_{n-1}-s_1$. Thus, the rank is preserved.
\end{proof}

\begin{corollary}
\label{cor:FromLoopToPairwise}
    To find the order that maximizes or minimizes the pairwise cost for $n$ vouchers, it suffices to find the corresponding loop cost for $n+1$ vouchers, rotate it to make $1$ be the last value, remove the last value, and subtract $1$ from all remaining values. 
\end{corollary}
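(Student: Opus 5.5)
The plan is to reduce Corollary~\ref{cor:FromLoopToPairwise} to the preceding corollary, applied to the increasing sequence $1,2,\dots,n+1$ with $s_1=1$. The only new ingredient is an observation about rotation. Every loop-cost equivalence class contains a representative ending in $1$, since we may rotate. So an extremal loop cost over all permutations of $\{1,\dots,n+1\}$ is also extremal, with the same value, among permutations whose last entry is $1$. Concretely, if $\vec{w}$ maximizes (resp.\ minimizes) the loop cost of $\{1,\dots,n+1\}$, we rotate $\vec{w}$ to get $(v_1,\dots,v_n,1)$ with the same loop cost. Because the rotated vector lies in the restricted family, it has rank equal to the top (resp.\ rank $1$) among loop costs of permutations ending in $1$.

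Next we invoke the preceding corollary with $s_1=1$. Subtracting $1$ from every entry of $(v_1,\dots,v_n,1)$ gives $(v_1-1,\dots,v_n-1,0)$. By the Transcalation Lemma~\ref{lem:scaling} (with $a=1$, $c=-1$) together with Proposition~\ref{prop:add0}, the loop cost of this vector equals the pairwise cost of $(v_1-1,\dots,v_n-1)$. The corollary then says this pairwise cost has the same rank among pairwise costs of permutations of $\{v_1-1,\dots,v_n-1\}=\{1,\dots,n\}$. In detail, the map sending a permutation $\vec{u}$ of $\{1,\dots,n\}$ to $(\vec{u}+\vec{1},1)$ is a bijection onto permutations of $\{1,\dots,n+1\}$ ending in $1$, and it changes costs by the order-preserving affine map $C_L = C_P + 2\sum u_i + (n+1)$. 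The term $2\sum u_i$ is constant, being $2\binom{n+1}{2}$. Hence maximal rank goes to maximal rank and minimal rank to minimal rank, so $(v_1-1,\dots,v_n-1)$ maximizes (resp.\ minimizes) the pairwise cost.

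The main point needing care is the rank bookkeeping. The preceding corollary compares ranks only within the restricted family of loop costs with last entry $1$, so we must check that the global extremum over all loop costs coincides with the extremum over this family. This is exactly what the rotation-invariance of the loop cost gives: the set of loop-cost values attained by permutations ending in $1$ equals the full set of attained loop-cost values. So the top and bottom ranks agree, and the statement follows. If one also wants \emph{all} extremal pairwise orders, the same bijection shows that every pairwise extremizer arises this way.
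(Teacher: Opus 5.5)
Your proof is correct and takes essentially the paper's route: the paper gives no separate argument and treats this corollary as an immediate consequence of the preceding rank-preservation corollary with $s_1=1$, together with rotation invariance of the loop cost. Your bijection $\vec{u}\mapsto(\vec{u}+\vec{1},1)$ with $C_L=C_P+2\binom{n+1}{2}+(n+1)$, and your observation that permutations ending in $1$ attain every loop-cost value, make explicit the bookkeeping the paper leaves implicit, and they also give the converse that every pairwise extremizer arises this way.
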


\section{Maximal Cost}
\label{sec:max}

Let $\vec{v}$ have distinct coordinates. Let $s=(s_1,s_2,\dots,s_{n-1},s_n)$ be the sequence of the same vouchers in order from least to greatest.

In the next theorem, the vouchers' price tags can be any set of distinct non-negative real numbers.

\begin{theorem}
\label{thm:maxloop}
For a set of price tags $s$ of distinct non-negative real numbers, a permutation of $s$ has maximal loop cost if and only if it is equivalent to the permutation of $s$ that is order-isomorphic to
\[2,\ 4,\ 6,\ 8,\ \ldots,\ n,\ \ldots,\ 7,\ 5,\ 3,\ 1.\]
The same order provides the maximal pairwise cost. If $s_1 > 0$, the above order providing maximal cost is unique up to the equivalence. If $s_1 = 0$, the maximal pairwise cost is also achieved for permutations that are order-isomorphic to
\[1,\ 2,\ 4,\ 6,\ 8,\ \ldots,\ n,\ \ldots,\ 7,\ 5,\ 3\]
and the reverse of it.
\end{theorem}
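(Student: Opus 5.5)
Since there are finitely many permutations, a maximizer exists. The plan is to show that no maximizer admits an improving flip, and that this local condition forces the stated shape. Fix a loop-cost maximizer and rotate it so that $s_n$ comes first (equivalence allows this); list the cyclic sequence as $v_1=s_n, v_2,\dots,v_n$. By the Flipping Lemma~\ref{lem:flipping}, applied cyclically (a rotation puts any arc in the middle), every pair of non-adjacent positions $i,j$ must satisfy $(v_i-v_j)(v_{i+1}-v_{j-1})\ge 0$. Otherwise reversing the arc between them strictly increases the cost.

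\textbf{Step 1: the loop case by induction.} The claim to prove is that, going around the circle from $s_n$, the two neighbours of $s_k$ among the larger elements behave like the construction. Concretely, I would show by downward induction that the elements $s_n,s_{n-1},\dots,s_k$ occupy a contiguous arc, with $s_{k}$ adjacent to $s_{k+2}$ and $s_{k+1}$ at the two ends of the arc.

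For the inductive step, suppose $s_n,\dots,s_{k+1}$ form an arc with ends $a=s_{k+1}$ and $b=s_{k+2}$. Suppose $s_k$ is not adjacent to this arc. Take the arc end $a$, its outside neighbour $x<s_k$, and the position of $s_k$. Flipping the segment from $x$ to the element just before $s_k$ pairs $a$ with $s_k$ and $x$ with the element beyond $s_k$. The sign condition $(a-y)(x-s_k)<0$ holds, where $y$ is the neighbour of $s_k$ beyond it. This holds as long as $y<a$, which is true since all elements outside the arc are smaller than $s_k<a$. So the flip strictly increases the cost, a contradiction.

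Thus $s_k$ is attached to an end of the arc. If it is attached to $a=s_{k+1}$ rather than $b=s_{k+2}$, consider the flip of the whole arc between $b$'s outside neighbour $z$ and $s_k$. The criterion compares $(s_k-z)(a-b)$: here $a<b$, and $s_k>z$ provided $z$ is smaller than $s_k$. In that case the flip strictly increases the cost, again a contradiction. This forces the alternating placement, which is exactly order-isomorphic to $2,4,\dots,n,\dots,5,3,1$. Strictness of these inequalities uses only distinctness, so uniqueness up to equivalence follows for the loop cost.

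\textbf{Step 2: the pairwise case via Proposition~\ref{prop:add0}.} Write $C_P(\vec v)=C_L((\vec v,0))$. The maximal pairwise cost for $s$ is therefore the maximal loop cost over permutations of $s\cup\{0\}$ that place $0$ in some position. Every such loop arrangement arises this way, so the pairwise maximizers are exactly the loop maximizers of $s\cup\{0\}$ with the $0$ cut out.

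If $s_1>0$, then $0$ is the new minimum. In the loop optimum for $n+1$ elements, $0$ sits between the two smallest-index neighbours, as in the construction $\dots,3,1,0,2,\dots$ relabelled. Removing it yields precisely the order $2,4,\dots,n,\dots,3,1$ (shifted), up to reversal, and uniqueness is inherited from Step 1.

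If $s_1=0$, then $s\cup\{0\}$ has a repeated $0$. Here I would redo Step 1 allowing equal values. The only non-strict flips are those where the factor $(v_i-v_j)$ or $(v_{i+1}-v_{j-1})$ vanishes, which happens exactly when the two zeros are the compared pair. This gives the extra optimum in which the two zeros are swapped relative to $s_2$. Cutting at the other zero yields $1,2,4,\dots,n,\dots,5,3$ and its reverse.

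\textbf{Main obstacle.} The hardest part is the inductive step in Step 1, specifically making sure the chosen flip is always between non-adjacent positions and that the relevant outside neighbour ($x$ or $z$) really is smaller than $s_k$. A small case check is needed when the arc covers all but one or two positions, which is when $k\le 2$ and the wrap-around makes $z$ and the neighbour of $s_k$ coincide. I would handle $n\le 3$ directly and treat the final one or two insertions separately.
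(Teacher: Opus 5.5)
Your proof is correct and is essentially the paper's argument run in mirror image. The paper grows a chain of the smallest values $s_1,\dots,s_k$ and uses the same two flips: one makes $s_{k+1}$ adjacent to the chain, and one reverses the whole chain to force attachment at its smaller end. You instead grow an arc of the largest values and force attachment at the larger end, and your pairwise step via $C_P(\vec v)=C_L((\vec v,0))$ is the paper's closing Remark in a more direct form.

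One slip to fix: the first flip must reverse the segment from $x$ \emph{through} $s_k$, not up to the element before $s_k$. That is the flip your stated pairings ($a$ with $s_k$, $x$ with $y$) and sign condition $(a-y)(x-s_k)<0$ actually describe.
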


\begin{proof}
Assume we have $\vec{v}$ that maximizes the sum of products of adjacent values. We also assume that the indices of $\vec{v}$ wrap around.

Assume that for some $s_k < s_n$, the set of terms $s_1$ through $s_k$ forms a single connected chain of adjacent values, but $s_{k+1}$ is not adjacent to any term in that chain. Then there exists a connected chain of terms that contains all terms from $s_1$ through $s_k$, followed by terms greater than $s_{k+1}$, then followed by $s_{k+1}$, then followed by terms greater than $s_{k+1}$. Suppose $i$ is the last index in the chain of terms from $s_1$ through $s_k$ and $j-1$ is the index of $s_{k+1}$. Then $v_i < s_{k+1} < v_j$ and $v_{i+1} > v_{j-1} = s_{k+1}$. Thus, we can apply the Flipping Lemma~\ref{lem:flipping} and increase the cost.

Thus, $s_{k+1}$ must be adjacent to the string with terms $s_1$ through $s_k$. Suppose it is adjacent to the term $a$, while the term $b$ is on the other side of the chain. Now $b$ is adjacent to term $d > s_{k+1}$ on the other side of the chain. If we cut out the string with terms $s_1$ through $s_k$ and reverse the order, the new cost increases by
\[s_{k+1}b + ad - s_{k+1}a - bd = (s_{k+1} - d)(b-a).\]
It follows that $s_{k+1}$ must be adjacent to the smallest end term of the chain $s_1$ through $s_k$.

We can build the resulting sequence by induction on $k$. Thus, our sequence is equivalent to
\[\ldots,\ s_7,\ s_5,\ s_3,\ s_1,\ s_2,\ s_4,\ s_6,\ \ldots,\]
which, after rotation, is order-isomorphic to the string in the statement.

The pairwise cost can be derived from the loop cost as follows:
\[C_P(\vec{v}) = C_L(\vec{v}) - v_1v_n.\]
The maximum of this expression is achieved when $C_L(\vec{v})$ is the largest and $v_1v_n$ is the smallest. If $s_1 > 0$, the latter is the smallest when it equals $s_1s_2$, concluding the proof in this case.

If $s_1 = 0$, then $s_1s_j$ is one of the smallest pairwise products possible. This implies that $v_n = s_1$. After that, we can remove $s_1$ and apply the results above to the remaining values. Equivalently, the remaining values should be arranged as order-isomorphic to 
\[2,\ 4,\ 6,\ 8,\ \ldots,\ n-1,\ \ldots,\ 7,\ 5,\ 3,\ 1\]
or its reversal. Then we can add $s_1$ on either side.
\end{proof}

\begin{remark*}
    Here is an alternative way to prove the theorem above for the pairwise cost. We know that the maximum loop cost for $n+1$ vouchers is realized when the permutation is order-isomorphic to 
    \[2,\ 4,\ 6,\ 8,\ \dots,\ n+1,\ \dots,\ 7,\ 5,\ 3,\ 1.\]
    By Corollary~\ref{cor:FromLoopToPairwise}, the maximum pairwise cost for a permutation of length $n$ is realized for a permutation order-isomorphic to
    \[1,\ 3,\ 5,\ 7,\ \dots,\ n,\ \dots,\ 6,\ 4,\ 2,\]
    which, after a reflection, produces the same result as in Theorem~\ref{thm:maxloop}.
\end{remark*}

For the sequence of price tags consisting of the natural numbers, the maximal loop and pairwise costs are known and shown in Section~\ref{sec:prelim}.

\begin{example}
    For $n=7$ and the sequence of natural numbers as price tags, the maximal loop and pairwise costs are realized for the sequence $(2, 4, 6, 7, 5, 3, 1)$ with the loop cost of $129$ and the pairwise cost of $127$.
\end{example}

Given an infinite increasing sequence $S$ of voucher price tags, we denote the maximum loop, pairwise, and voucher costs for buying $n$ vouchers as $M_L^S(n)$, $M_P^S(n)$, and $M_V^S(n)$. Similarly, for minimum costs, the notation is $m_L^S(n)$, $m_P^S(n)$, and $m_V^S(n)$.

From the theorem above, we have the following corollary.

\begin{corollary}
    We have
    \[M_L^S(n) = M_P^S(n) + s_1s_2.\]
\end{corollary}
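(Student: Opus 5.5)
The corollary should follow from Theorem~\ref{thm:maxloop} via the identity $C_P(\vec v)=C_L(\vec v)-v_1v_n$, which holds for every arrangement. We will bound $M_P^S(n)$ from both sides by $M_L^S(n)-s_1s_2$. Throughout, $s_1<s_2<\dots<s_n$ are the first $n$ price tags of $S$, and we may assume $n\ge 3$.

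For the lower bound on $M_P^S(n)$, take the loop-maximizing arrangement of Theorem~\ref{thm:maxloop}, order-isomorphic to $2,4,\dots,n,\dots,5,3,1$. Its first and last entries are $s_2$ and $s_1$. Therefore
\[
M_P^S(n)\ \ge\ M_L^S(n)-s_1s_2 .
\]

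For the upper bound, take any arrangement $\vec v$. Since $v_1\neq v_n$ are distinct non-negative tags, $\{v_1,v_n\}$ is a two-element subset of $\{s_1,\dots,s_n\}$. The product of such a pair is at least $s_1s_2$: the larger element is at least $s_2$, the smaller at least $s_1$, and all values are non-negative. Combined with $C_L(\vec v)\le M_L^S(n)$, this gives
\[
C_P(\vec v)=C_L(\vec v)-v_1v_n\le M_L^S(n)-s_1s_2 .
\]
Taking the maximum over $\vec v$ yields the reverse inequality, and so equality holds.

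The only point needing care is the minimal endpoint product $s_1s_2$ when $s_1=0$. In that case every pair containing $s_1$ gives product $0=s_1s_2$, so the bound is still valid even though the minimizing pair is no longer unique. This is exactly the extra case in Theorem~\ref{thm:maxloop}, and it does not affect the value of the maximum. Note also that the argument is precisely the one at the end of the proof of Theorem~\ref{thm:maxloop}, so in the paper this corollary could simply cite that step.
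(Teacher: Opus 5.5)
Your proof is correct and matches the paper's approach: the corollary is read off from the final step of the proof of Theorem~\ref{thm:maxloop}, which uses $C_P(\vec v)=C_L(\vec v)-v_1v_n$, the fact that the loop maximizer has endpoints $s_2$ and $s_1$, and $v_1v_n\ge s_1s_2$ for distinct non-negative tags. Your two-sided bound simply makes that step explicit, and the same argument works verbatim for $n=2$, so you do not need to assume $n\ge 3$.
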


\begin{example}
    We already know from Section~\ref{sec:prelim} that for the sequence of natural numbers, we have
    \[M_L(n) = M_P(n) + 2.\]
\end{example}

We now move to the voucher cost. Before proving the theorem, we define the min-to-min boundary-respecting construction. We initialize the value of the left boundary to $1$ and the value of the right boundary to $0$. 

\begin{definition}[Min-to-min boundary-respecting construction] The \textit{min-to-min boundary-respecting construction} works as follows. Pick a voucher from the vouchers that are not placed yet with the smallest price tag $x$ and put it on the side of the smallest current boundary value next to the other placed vouchers and on the inside. Change the corresponding boundary value to $x$. If there is a tie, we can choose either side.
\end{definition}

\begin{example}
    Suppose we have vouchers with price tags $(0.5, 2, 3, 4)$. Initially, the left boundary is $1$, and the right boundary is $0$, so the right boundary is smaller. The cheapest voucher, $0.5$, goes to the right, and the right boundary changes value to $0.5$. The right boundary is still the smallest, so the next cheapest voucher, $2$, goes to the right, before the $0.5$-voucher. Now the left boundary is $1$ and cheaper than the new value of the right boundary, which is $2$. Thus, the next voucher, $3$, goes to the left. The last voucher is placed in the remaining spot in the middle, giving us the following sequence of vouchers: $(3, 4, 2, 0.5)$.
\end{example}

\begin{theorem}
\label{thm:maxorderiso}
    Given a set of positive, distinct values, the following order yields the maximum voucher cost. First, all positive real numbers greater than or equal to one are arranged to be order-isomorphic to \[2,\ 4,\ 6,\ 8,\ \ldots,\ n,\ \ldots,\ 7,\ 5,\ 3,\ 1,\]
    and then all positive real numbers less than $1$ are placed at the end in decreasing order.
\end{theorem}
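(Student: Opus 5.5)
By Proposition~\ref{prop:add0}, $C_V(\vec v)=C_P((1,\vec v))=C_P((1,\vec v,0))$. So maximizing the voucher cost amounts to maximizing the pairwise cost of a path whose two endpoints are fixed: the phantom $1$ on the left and an extra $0$ on the right. The plan is to rerun the greedy/induction argument from the proof of Theorem~\ref{thm:maxloop} for this path. The Flipping Lemma~\ref{lem:flipping} applies to the path: reversing an interior block $v_{i+1},\dots,v_{j-1}$ with $0\le i<j-1\le n$ never moves the fixed endpoints. The flipped block may include $v_1$ or $v_n$, with the endpoint $v_0=1$ or $v_{n+1}=0$ serving as the outer neighbor.

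\textbf{Step 1 (connectivity).} Order the tags as $s_1<\dots<s_n$ and take an optimal $\vec v$. I would prove by induction on $k$ that $\{s_1,\dots,s_k\}$, together with the two fixed endpoints, occupies a \emph{prefix} of the path (starting at $1$) and a \emph{suffix} (ending at $0$). The middle segment then consists only of values larger than $s_k$.

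For the inductive step, suppose $s_{k+1}$ lies strictly inside the middle segment. Let $v_i$ be the last entry of the prefix, so $v_i\le s_k$ or $v_i=1$. Let $v_{j-1}=s_{k+1}$, and note that $v_{i+1}$ and $v_j$ are larger middle values. If the prefix end is smaller than $s_{k+1}$, then $v_i<v_j$ and $v_{i+1}>v_{j-1}$, so the Flipping Lemma gives a strict increase. If instead the prefix end is $1>s_{k+1}$, I would use the symmetric flip from the suffix side, where the end is $0$ or some $s_\ell\le s_k$. Making sure one of the two sides always yields a strict gain is the delicate point here.

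\textbf{Step 2 (which side).} Let $a$ be the current end of the prefix and $b$ the current end of the suffix. Suppose $s_{k+1}$ sits at one end of the middle segment and $d>s_{k+1}$ at the other. Reversing the whole middle segment changes the cost by $\pm(d-s_{k+1})(a-b)$, the same computation as in the proof of Theorem~\ref{thm:maxloop}. Hence in an optimal sequence $s_{k+1}$ must be adjacent to the \emph{smaller} of the two boundary values $a,b$. This is exactly the min-to-min boundary-respecting construction, with left boundary $1$ and right boundary $0$, so that construction attains the maximum.

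\textbf{Step 3 (evaluate the construction).} The right boundary starts at $0$, and every tag $x<1$ keeps it below the left boundary $1$. So all tags less than $1$ go to the right, consecutively, and produce the decreasing tail. Let $t_1<t_2<\dots$ be the tags $\ge 1$. When the first of them is placed, the right boundary is below $1$, so $t_1$ also goes to the right. From then on the two boundaries are $1$ and successive $t$'s, and the placements alternate sides. This yields
\[\dots,\ t_5,\ t_3,\ t_1,\ \text{(small tags, decreasing)}\]
growing leftward from the $0$ side, and $t_2,t_4,\dots$ growing rightward from the $1$ side. The result is order-isomorphic to $2,4,\dots,n,\dots,5,3,1$ followed by the small tags.

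I expect the main obstacle to be the bookkeeping in Step~1 when the fixed value $1$ exceeds some $s_k$. It is also where a tag exactly equal to $1$ can create ties, which only matter for uniqueness and not for the maximality asserted here.
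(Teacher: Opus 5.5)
Your proposal is correct and is essentially the paper's argument. Both run an induction with the Flipping Lemma to show that an optimal arrangement comes from the min-to-min boundary-respecting construction with boundaries $1$ and $0$, and then read off the order; the paper leaves your Step~3 implicit, and it merges your Steps~1 and~2 into one flip of the block from the smaller boundary's inner neighbor through $s_k$. Your ``delicate point'' is not actually delicate: the suffix end is always $0$ or an already placed tag, hence below $s_{k+1}$, so the suffix-side flip always gives a strict gain.
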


\begin{proof}
We first show that we can get the highest cost by following the min-to-min boundary-respecting construction. We prove it by induction on the number of placed vouchers.

For our base case, we assume that nothing has been placed yet. Now for the inductive step. Suppose we already placed the vouchers with values $s_{k-1}$ and smaller to maximize the voucher cost. Denote the left-side boundary value as $v_\ell$ and the right-side boundary value as $v_r$. Without loss of generality, suppose $v_\ell < v_r$. We now need to prove that the maximum cost is realized when the voucher $s_k$ is placed inside and to the right of $v_\ell$. 

Suppose, to the contrary, the voucher cost is maximized when the value $v_m > s_k$ is placed next to the smallest boundary value. Now we use the Flipping Lemma~\ref{lem:flipping}, where we flip the subsequence between the left boundary and $s_k$. Namely, we pick $v_i = v_\ell$ and $v_{j-1} = s_k$. We know that $v_j$ is either $v_r$ or greater, in any case $v_j > v_i = v_\ell$. In addition, $v_{i+1} = v_m > v_{j-1} = s_k$, by our assumption. We get $(v_i - v_j)(v_{i+1} - v_{j-1}) < 0$, and, by the Flipping Lemma~\ref{lem:flipping}, the described flip increases the cost. Thus, $s_k$ must be placed next to $v_\ell$ to the right, concluding the induction step.

The theorem follows from the use of the min-to-min boundary-respecting construction.
\end{proof}

If all our real numbers are greater than or equal to $1$, then we have the following corollary from the theorem.
\begin{corollary}
\label{cor:maxorderisoMoreThan1} 
For a sequence $s$ of distinct real numbers in increasing order with $s_1 \ge 1$, the permutation with maximal voucher cost is order-isomorphic to
\[2,\ 4,\ 6,\ 8,\ \dots,\ n,\ \dots,\ 7,\ 5,\ 3,\ 1.\]
In addition, if $s_1=1$, the maximal voucher cost is also achieved on permutations order-isomorphic to
\[3,\ 5,\ 7,\ 9,\ \dots,\ n,\ \dots,\ 6,\ 4,\ 2,\ 1.\]
\end{corollary}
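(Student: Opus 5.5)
The cleanest route is to reduce the voucher cost to a loop cost and quote Theorem~\ref{thm:maxloop}. By Proposition~\ref{prop:add0}, $C_V(\vec v)=C_P((1,\vec v))$. Also $C_P((1,\vec v))=C_L((1,\vec v))-v_n$.

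\textbf{Case $s_1>1$.} Consider the augmented set $\{1,s_1,\dots,s_n\}$ of $n+1$ distinct values, in which $1$ is the smallest. For any permutation $\vec v$ of $s$,
\[C_V(\vec v)=C_L((1,\vec v))-v_n\le C_L((1,\vec v))-s_1 .\]
By Theorem~\ref{thm:maxloop}, the loop cost over the augmented set is maximized exactly by arrangements order-isomorphic to
\[2,4,6,\dots,7,5,3,1.\]
Rotate such an arrangement so that the smallest element $1$ comes first. Then its two neighbours are the second and third smallest elements, namely $s_1$ and $s_2$, and after a reflection we may assume $s_1$ sits at the end. This single arrangement therefore maximizes $C_L((1,\vec v))$ and minimizes the subtracted term $v_n$ at the same time. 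Hence it maximizes $C_V$, and it is unique because the bound $v_n\ge s_1$ is strict unless $v_n=s_1$.

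Now translate back. Write the maximizer cyclically as
\[\dots,s_5,s_3,s_1,1,s_2,s_4,\dots,\]
and cut it at $1$, keeping the reflection in which $s_1$ is last. This gives
\[s_2,s_4,s_6,\dots,s_5,s_3,s_1,\]
which is order-isomorphic to $2,4,6,\dots,n,\dots,5,3,1$, as claimed.

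\textbf{Case $s_1=1$.} The same loop argument applies, but the augmented set now contains two copies of $1$. To avoid relying on Theorem~\ref{thm:maxloop} for a set with repeats, I would instead argue directly with the Flipping Lemma~\ref{lem:flipping}. When $v_k=1$, the terms $v_{k-1}v_k=v_{k-1}\cdot 1$ mimic the phantom coordinate $v_0=1$, so the segment $v_1,\dots,v_{k-1}$ can be reflected without changing the cost. This is exactly the equivalence relation defined for voucher costs.

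Take the maximizer $s_2,s_4,\dots,s_3,s_1$ and reflect everything before the final $s_1=1$. Its prefix $s_2,s_4,\dots,s_5,s_3$ becomes $s_3,s_5,\dots,s_4,s_2$. The resulting arrangement is order-isomorphic to $3,5,7,\dots,n,\dots,6,4,2,1$. So the second family listed in the corollary arises from the first by this cost-preserving reflection.

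It remains to show that no other arrangements are maximal when $s_1=1$. I would do this with the min-to-min boundary-respecting construction in the proof of Theorem~\ref{thm:maxorderiso}, whose only freedom occurs at ties. The left boundary starts at $1$ and the right boundary at $0$, so $s_1=1$ is forced to go to the right end. At that point both boundaries equal $1$, which is a tie. This tie is the only source of the second family, and choosing either side produces exactly the two listed families.

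The main obstacle is the equality case: proving that the flips in Theorem~\ref{thm:maxorderiso} strictly increase the cost except at ties. This requires the inequality $(v_i-v_j)(v_{i+1}-v_{j-1})<0$ from the Flipping Lemma to be strict, which holds because the values are distinct.
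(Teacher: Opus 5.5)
Your proof is correct, but it takes a different route from the paper's.

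\textbf{The paper's route.} The paper reads the first part directly off Theorem~\ref{thm:maxorderiso}, the min-to-min boundary-respecting construction. For $s_1=1$ it observes that once $v_n=1$, the voucher cost equals the loop cost of $\vec v$ itself. Theorem~\ref{thm:maxloop}, applied to $s$, then supplies the two maximal loops ending in $s_1$. Equivalently, after $s_1$ is placed both boundaries equal $1$, and that tie produces the two families.

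\textbf{Your case $s_1>1$.} The augmentation $C_V(\vec v)=C_L((1,\vec v))-v_n$ over the $n+1$ distinct values $\{1\}\cup s$ is a genuinely nice alternative. Because Theorem~\ref{thm:maxloop} is an ``if and only if,'' it also gives uniqueness. The paper's one-line appeal to Theorem~\ref{thm:maxorderiso} leaves that implicit, since that theorem only asserts existence, even though the corollary says ``the permutation.''

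\textbf{Your case $s_1=1$.} Here your route is heavier than it needs to be. You worry about the repeated $1$ and then redo the strictness bookkeeping of the boundary construction. You can skip both by noting that
$C_V(\vec v)=C_L(\vec v)-v_1(v_n-1)\le C_L(\vec v)$, with equality if and only if $v_n=1$. Theorem~\ref{thm:maxloop}, applied to the distinct set $s$, then gives exactly the two cuts of the optimal cycle $\dots,s_5,s_3,s_1,s_2,s_4,\dots$ that end at $s_1$. This yields both families at once, and also shows there are no others.

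\textbf{Two small points in your version.}
\begin{itemize}
\item Your step ``take the maximizer $s_2,s_4,\dots,s_3,s_1$'' is not covered by your first case when $s_1=1$, since that case required $s_1>1$. It follows from Theorem~\ref{thm:maxorderiso}. Alternatively, it follows from your tie analysis: every maximizer is one of the two outputs, and your prefix reflection shows those two outputs have equal cost.
\item Strictness of the forced flips needs more than distinctness of the $s_i$. It also needs every unplaced value to exceed the phantom boundaries $0$ and $1$. This holds because $s_1\ge 1$ is placed first, so all remaining values exceed $1$.
\end{itemize}
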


\begin{proof}
The first part follows directly from Theorem~\ref{thm:maxorderiso}. Now suppose $s_1=1$. In this case, given that $v_n = 1$, the voucher cost is the same as the loop cost, and we have two loops that achieve the maximum and end in $s_1$: the one we had before and the one that is order-isomorphic to
\[3,\ 5,\ 7,\ 9,\ \dots,\ n,\ \dots,\ 6,\ 4,\ 2,\ 1.\]

Or, if we follow the min-to-min boundary-respecting construction, after placing the smallest voucher to the right, the two boundary values become equal, and we can place the second-smallest voucher either to the right or to the left.
\end{proof}

For the set of vouchers that are integers 1 through $n$, we can connect the maximal voucher cost to the maximal loop and pairwise costs.

\begin{corollary}
    We have
    \[M_V^S(n) = M_P^S(n) + s_2 = M_L^S(n) - s_1s_2 + s_2.\]
\end{corollary}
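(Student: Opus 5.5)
The plan is to show that a single permutation maximizes both the pairwise and the voucher cost, and that its first entry is $s_2$. I will assume, as in this part of the paper, that $S$ is an increasing sequence of positive integers (for instance $1,2,\dots,n$), so that $s_1 \ge 1$ and Corollary~\ref{cor:maxorderisoMoreThan1} applies. The starting point is the identity
\[
C_V(\vec{v}) = v_1 + C_P(\vec{v}),
\]
which follows directly from the definitions, or from Proposition~\ref{prop:add0} via $C_V(\vec v)=C_P((1,\vec v))$.

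First, by Corollary~\ref{cor:maxorderisoMoreThan1}, a permutation attaining $M_V^S(n)$ is the one order-isomorphic to $2,4,6,\dots,n,\dots,7,5,3,1$. Concretely it is $s_2,s_4,s_6,\dots,s_5,s_3,s_1$. Its first entry is $v_1=s_2$.

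Second, by Theorem~\ref{thm:maxloop}, this same permutation attains the maximal pairwise cost, so $C_P(\vec v)=M_P^S(n)$. Substituting into the identity gives
\[
M_V^S(n)=C_V(\vec v)=s_2+M_P^S(n).
\]
No separate upper-bound argument is needed, because Corollary~\ref{cor:maxorderisoMoreThan1} already certifies that this permutation is the voucher maximizer. The second equality then follows from the earlier corollary $M_L^S(n)=M_P^S(n)+s_1s_2$, rearranged to $M_P^S(n)=M_L^S(n)-s_1s_2$.

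The only delicate point is the hypothesis $s_1\ge 1$. If some price tags were below $1$, Theorem~\ref{thm:maxorderiso} puts them at the end in decreasing order. The voucher maximizer would then no longer coincide with the pairwise maximizer, and its first entry need not be $s_2$. I would therefore state explicitly that the corollary concerns sequences with $s_1\ge1$, as in the integer setting. When $s_1=1$, the alternative maximizer from Corollary~\ref{cor:maxorderisoMoreThan1} starts with $s_3$ and so does not fit this argument. This causes no difficulty: one maximizer suffices to compute the maximum value.
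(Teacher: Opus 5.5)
Your proof is correct and follows the argument the paper leaves implicit: by Corollary~\ref{cor:maxorderisoMoreThan1} and Theorem~\ref{thm:maxloop}, the sequence $s_2,s_4,\dots,s_3,s_1$ maximizes the voucher, pairwise, and loop costs simultaneously, so $C_V=v_1+C_P$ and $C_L=C_P+v_nv_1$ give both equalities. Your explicit hypothesis $s_1\ge 1$ (and implicitly $n\ge 2$) is the positive-integer setting the paper intends, and it is genuinely needed: for the tags $\{0.5,2,3\}$ one has $M_V=10$ but $M_P+s_2=9.5$.
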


In particular, for our main example, we have the following formulas.

\begin{corollary}
    We have
    \[M_V(n) = M_P(n) + 2 = M_L(n) = \frac{2n^3+3n^2-11n+18}{6}.\]
\end{corollary}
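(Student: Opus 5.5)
The corollary concerns price tags $1,2,\dots,n$, so $s_1=1$ and $s_2=2$. I plan to obtain it from the preceding general corollary $M_V^S(n)=M_P^S(n)+s_2=M_L^S(n)-s_1s_2+s_2$, and then insert the known closed form for $M_L(n)$ from Section~\ref{sec:prelim}.

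The key step is to justify $M_V^S(n)=M_P^S(n)+s_2$ directly, so that the argument does not rest on an unexplained identity. By Corollary~\ref{cor:maxorderisoMoreThan1}, when $s_1\ge 1$ the maximal voucher cost is attained by the permutation order-isomorphic to $2,4,6,\dots,n,\dots,5,3,1$. This same permutation maximizes the pairwise cost by Theorem~\ref{thm:maxloop}. Its first entry is $v_1=s_2$. Since $C_V(\vec v)=v_1+C_P(\vec v)$, evaluating this permutation gives $M_V^S(n)=s_2+M_P^S(n)$.

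As a sanity check on the upper bound, note that for any permutation $C_V=v_1+C_P\le v_1+M_P^S(n)$. This bound by itself is not sharp when $v_1=s_n$, so I would not try to prove the corollary from it. Instead I rely on the theorem: it shows the voucher maximum is attained on that specific permutation, and hence equals the value computed there.

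Next I combine this with the corollary $M_L^S(n)=M_P^S(n)+s_1s_2$ to get the second equality $M_V^S(n)=M_L^S(n)-s_1s_2+s_2$. Specializing to $s_1=1$ and $s_2=2$ gives $M_V(n)=M_P(n)+2=M_L(n)-2+2=M_L(n)$. Finally I substitute the formula $M_L(n)=\frac{2n^3+3n^2-11n+18}{6}$ quoted from OEIS A110610. As a spot check, $n=8$ gives $191$, and the permutation $(2,4,6,8,7,5,3,1)$ has voucher cost $2+189=191$.

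The only delicate point is the identification of the first entry $v_1$ of the maximizing voucher permutation as $s_2$. When $s_1=1$, Corollary~\ref{cor:maxorderisoMoreThan1} also allows the alternative maximizer beginning with $s_3$. That alternative ends in $1$, so its voucher cost equals its loop cost, which is again $M_L(n)$, consistent with the formula. I expect this consistency check to be the main thing to state carefully; the rest is substitution.
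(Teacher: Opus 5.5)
Your proposal is correct and follows the paper's route. The paper obtains this corollary by specializing $M_V^S(n)=M_P^S(n)+s_2=M_L^S(n)-s_1s_2+s_2$ to $s_1=1$, $s_2=2$; that identity rests, as you explain, on the fact that the permutation order-isomorphic to $2,4,\dots,n,\dots,3,1$ maximizes both the voucher cost (Corollary~\ref{cor:maxorderisoMoreThan1}) and the pairwise cost (Theorem~\ref{thm:maxloop}) and begins with $s_2$, after which the quoted formula for $M_L(n)$ is substituted. The one point worth stating explicitly is that the corollary implicitly requires $n\ge 2$: $M_P$ and $M_L$ are undefined for $n=1$, and the closed form gives $2\neq M_V(1)=1$.
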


We can also describe the sequence maximizing the pairwise cost using our boundary construction.

\begin{corollary}
    We can use the min-to-min boundary-respecting construction, with initial values 0 on both sides, to construct a sequence with maximum pairwise cost.
\end{corollary}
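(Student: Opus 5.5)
The plan is to reduce the pairwise cost to a voucher-type cost with two zero boundaries, and then run the induction from the proof of Theorem~\ref{thm:maxorderiso} essentially unchanged. By Proposition~\ref{prop:add0}, $C_P(\vec v)=C_P((0,\vec v,0))$. So maximizing the pairwise cost is the same as maximizing the sum of adjacent products of a sequence with phantom coordinates $v_0=0$ and $v_{n+1}=0$. This is the voucher setting, except that the left phantom value $1$ is replaced by $0$. In this language the min-to-min boundary-respecting construction starts with both boundary values equal to $0$. At each step it places the smallest unplaced voucher on the inside of the side whose current boundary value is smaller, with ties broken arbitrarily.

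I would then prove by induction on $k$ that some maximizing arrangement agrees with the construction on the placement of $s_1,\dots,s_k$. That is, the vouchers $s_1,\dots,s_k$ occupy a prefix and a suffix of the sequence, in exactly the positions the construction assigns them. For the inductive step, let the current boundary values be $v_\ell\le v_r$, and suppose some $v_m>s_k$ sits immediately inside $v_\ell$ instead of $s_k$. Take $v_i=v_\ell$ and let $v_{j-1}=s_k$, which lies somewhere in the unplaced middle block. Then $v_j$ is either another unplaced voucher (so $v_j>s_k>v_\ell$) or the right boundary $v_r\ge v_\ell$. Also $v_{i+1}-v_{j-1}=v_m-s_k>0$. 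Hence $(v_i-v_j)(v_{i+1}-v_{j-1})\le 0$.

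By the Flipping Lemma~\ref{lem:flipping}, applied to the extended sequence $(0,\vec v,0)$, reversing the block from $v_{i+1}$ to $v_{j-1}$ does not decrease the cost. The reversal moves $s_k$ next to $v_\ell$ and leaves the already-placed vouchers fixed. So a maximizer consistent with the construction through step $k$ exists, which completes the induction. Because the lemma allows real entries, the phantom zeros cause no difficulty.

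The main obstacle is the tie and equality cases. When $v_\ell=v_r$, or when $v_j=v_r=v_\ell$, the product above is $0$ rather than negative, so the flip only preserves the cost. This is why the statement claims only that the construction produces a maximizer, not that every maximizer arises this way. Ties occur at the first step (both boundaries $0$), and also when $s_1=0$. The latter is consistent with the extra maximizers listed in Theorem~\ref{thm:maxloop}.

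Finally, as a sanity check, I would verify that the construction's output matches Theorem~\ref{thm:maxloop}. With both boundaries $0$, $s_1$ goes to one end, and then $s_2$ goes to the other end because $0<s_1$. From then on the placements alternate sides, producing $s_2,s_4,s_6,\dots,s_5,s_3,s_1$ up to reflection. This is order-isomorphic to $2,4,6,\dots,n,\dots,5,3,1$, the maximizer identified in Theorem~\ref{thm:maxloop}.
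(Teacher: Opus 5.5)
Your proposal is correct and follows the paper's approach. The paper likewise treats the pairwise cost as the voucher cost with left phantom value $0$ instead of $1$, and simply reruns the min-to-min induction from Theorem~\ref{thm:maxorderiso}; your weak-inequality treatment of ties usefully fills in a case that proof skips by assuming $v_\ell<v_r$, which matters here because the very first step is already a tie (only note that $s_k>v_\ell$ should read $s_k\ge v_\ell$ when $s_1=0$, which does not affect $v_\ell-v_j<0$).
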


\begin{proof}
    We use the formula $C_P(\vec{v}) = C_P((0,\vec{v})) = C_V((0,\vec{v}))$ from  Proposition~\ref{prop:add0}. The pairwise cost is equivalent to multiplying the first value by $0$ at the beginning of the voucher cost, rather than by $1$, so we initialize the left boundary to $0$ rather than $1$. From there, we can follow the min-to-min boundary procedure since the pairwise cost is essentially the same as the voucher cost.
\end{proof}

We can also describe the sequence maximizing the loop cost using our boundary construction.

\begin{corollary}
    We can use the min-to-min boundary-respecting construction, with initial values $s_1$ on both sides, and proceed with the remaining values to construct a sequence with maximum loop cost.
\end{corollary}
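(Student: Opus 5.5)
The idea is to reduce the loop cost to a linear cost by cutting the loop at the smallest element, and then rerun the induction from the proof of Theorem~\ref{thm:maxorderiso} with both boundaries equal to $s_1$.

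\textbf{Reduction.} Since rotations preserve the loop cost, we may assume $s_1$ is the last entry, so $\vec v=(v_1,\dots,v_{n-1},s_1)$. Setting the phantom boundary values $v_0=v_n=s_1$, we get
\[
C_L(\vec v)=s_1v_1+v_1v_2+\cdots+v_{n-2}v_{n-1}+v_{n-1}s_1 .
\]
This is a path cost on $v_1,\dots,v_{n-1}$, with boundary value $s_1$ at both ends. This matches Proposition~\ref{prop:add0}, which says $C_L((v_0,\vec v))=C_P((v_0,\vec v,v_0))$.

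\textbf{Setting up the induction.} The problem is now exactly the one handled in the proof of Theorem~\ref{thm:maxorderiso}. The only change is that both boundaries start at $s_1$, instead of $1$ on the left and $0$ on the right. We place $s_2,s_3,\dots,s_n$ in increasing order, always working inward from the two boundaries. We induct on $k$ to show that some maximizer places $s_k$ immediately inside the boundary whose current value is smaller. Placing $s_k$ there updates that boundary to $s_k$, exactly as the construction prescribes.

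\textbf{Inductive step.} Suppose, in a maximizer, $v_\ell$ is the smaller current boundary, and the entry just inside it is $v_m\neq s_k$. By the induction hypothesis $s_2,\dots,s_{k-1}$ are already placed, so $v_m>s_k$. Let $s_k$ sit at position $j-1$. Its right neighbour $v_j$ is either the other boundary value $v_r\ge v_\ell$ or an unplaced entry, which is larger than $v_\ell$.
\begin{itemize}
\item If $v_j>v_\ell$, then $(v_\ell-v_j)(v_m-s_k)<0$, and the Flipping Lemma~\ref{lem:flipping} strictly increases the cost. This contradicts maximality.
\item If $v_j=v_\ell$, the product is $0$, so the flip leaves the cost unchanged and produces another maximizer with $s_k$ in the desired position. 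This case is exactly a tie between the boundaries, which is why the construction allows either side there.
\end{itemize}
The one new point compared with Theorem~\ref{thm:maxorderiso} is this tie. The linear case never started with equal boundaries, but here the very first step ($s_2$ with both boundaries equal to $s_1$) is a tie. Ties are resolved by the zero case of the Flipping Lemma, since equality of costs is all that is needed.

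\textbf{Conclusion.} Running the construction from the boundary pair $(s_1,s_1)$ gives
\[
\ldots,\ s_7,\ s_5,\ s_3,\ s_1,\ s_2,\ s_4,\ s_6,\ \ldots
\]
(read cyclically, with $s_1$ placed between the two boundaries), up to reflection. This agrees with Theorem~\ref{thm:maxloop}, which confirms that the construction yields a maximum loop cost.
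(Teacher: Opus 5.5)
Your proof is correct. It starts from the same identity as the paper, $C_L((s_1,\vec w))=C_P((s_1,\vec w,s_1))$ from Proposition~\ref{prop:add0}, but then takes a different route.

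The paper does not redo any induction. It feeds the augmented vector $(s_1,\vec w,s_1)$ into the already-established pairwise version of the procedure (boundaries $0,0$). It then observes that the first two steps place the two copies of $s_1$ at the two ends, after which the procedure is exactly the one with both boundaries initialized to $s_1$. That is shorter, but it tacitly does two things:
\begin{itemize}
\item It applies a result proved for distinct price tags to a list containing $s_1$ twice.
\item It uses the unstated (though true) fact that an unrestricted pairwise maximizer which happens to have $s_1$ at both ends is also a maximizer among arrangements with $s_1$ fixed at the ends.
\end{itemize}

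You instead rotate $s_1$ to the end of the loop and fix it as both boundaries from the outset. You then rerun the Flipping Lemma induction of Theorem~\ref{thm:maxorderiso} directly, settling the boundary tie through the zero case of the lemma. This repeats an argument, but it is self-contained. No repeated values ever enter. The tie at the very first step is handled explicitly, whereas the proof of Theorem~\ref{thm:maxorderiso} sets ties aside with ``without loss of generality, suppose $v_\ell<v_r$.''

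Your closing comparison with Theorem~\ref{thm:maxloop} is only a consistency check and is not needed for the argument.
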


\begin{proof}
We can represent our vector $\vec{v}$ as a concatenation $(s_1,\vec{w})$. From Proposition~\ref{prop:add0}, we have $C_P((s_1,\vec{w},s_1)) = C_L((s_1,\vec{w})) = C_L(\vec{v})$. If we follow the boundary procedure for the pairwise cost on vector $(s_1,\vec{w},s_1)$, we will put two copies of $s_1$ as new boundaries on the left and on the right, and then continue. This is equivalent to initializing the boundaries as $s_1$ and proceeding with the remaining values.
Once again, rotations and reflections of the resulting sequence work as well.
\end{proof}

\begin{example}
    Suppose we have vouchers with price tags $(0.5, 1, 4, 5)$. We start by placing the $0.5$ on the left and initializing both boundaries to $0.5$. Now, we move on to the $1$, which we can place on either side. If we place it on the left, then the $4$ goes next to the $0.5$ boundary on the right, and the $5$ goes next to the $1$. This gives the sequence $(0.5,1,5,4)$. If we place the $1$ on the right, then the $4$ goes next to the $0.5$ on the left, and the $5$ goes next to the $1$. This gives the equivalent sequence $(0.5,4,5,1)$.
\end{example}

\section{Minimal Cost}
\label{sec:min}

\subsection{Preliminary lemmas}

We start with the following two lemmas, which apply to any set of distinct non-negative price tags. As before, we assume that the ordered version of $\vec{v} = (v_1,v_2,\ldots,v_n)$ is the increasing sequence $s = (s_1,\ldots,s_n)$.

\begin{lemma}
\label{lem:pnisn}
    Given distinct non-negative price tags, there exists a sequence that provides the minimum loop, pairwise, or voucher cost and is equivalent to a sequence
$\vec v=(v_1,\ldots,v_n)$ with $v_n=s_n$.
\end{lemma}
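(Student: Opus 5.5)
We handle the three costs separately, since equivalence means something different for each. Throughout, we start from a minimizer $\vec v$ and transform it, without increasing the cost, into one whose last entry is $s_n$.

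For the loop cost the claim is immediate. Any minimizing sequence can be rotated so that $s_n$ lands in position $n$. Rotation is an equivalence for the loop cost, so the cost does not change.

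For the pairwise cost, let $v_k = s_n$ with $k<n$ in a minimizing sequence. If $k=1$, the reflection puts $s_n$ last, and reflection is an equivalence. If $1<k<n$, we apply the Flipping Lemma~\ref{lem:flipping}. Reversing the suffix $v_{k+1},\dots,v_n$ would change the cost by a boundary-type quantity, which the Flipping Lemma does not directly cover. So instead we append a phantom $0$ at the end: by Proposition~\ref{prop:add0}, $C_P(\vec v)=C_P((\vec v,0))$. Now take $i=k$ and $j=n+1$ (where $v_{n+1}=0$), and reverse $v_{k+1},\dots,v_n$. The sign quantity is $(v_k-0)(v_{k+1}-v_n)=s_n(v_{k+1}-v_n)$.
\begin{itemize}
\item If $v_{k+1}>v_n$, this quantity is positive, so the flip strictly decreases the cost. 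That contradicts minimality, so this case cannot occur.
\item If $v_{k+1}<v_n$, we instead apply the same idea on the left. Prepend a phantom $0$ via $C_P((0,\vec v))$, take $i=0$ and $j=k$, and reverse $v_1,\dots,v_{k-1}$. The quantity is $(0-s_n)(v_1-v_{k-1})$.
\end{itemize}
This two-sided bookkeeping is the step that needs care. A cleaner route is to first dispose of neighbours directly: the entry $s_n$ has two neighbours $a=v_{k-1}$ and $b=v_{k+1}$, and moving the whole block $v_k,\dots,v_n$ reversed, i.e.\ reversing the suffix from position $k$, replaces the term $a s_n$ by $a v_n$. Since $v_n\le s_n$ and $a\ge 0$, this does not increase the cost. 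The resulting sequence is still a minimizer, and it ends in $s_n$. This suffix-reversal argument is the one we intend to use; it is just the Flipping Lemma with $v_j$ taken to be a phantom $0$.

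For the voucher cost we use the same suffix reversal. Write $C_V(\vec v)=C_P((1,\vec v))$ by Proposition~\ref{prop:add0}, so the phantom $v_0=1$ sits at the front and the end of the sequence is free. If $s_n=v_k$ with $k\ge 1$, reverse $v_k,\dots,v_n$. The only changed term is $v_{k-1}s_n\mapsto v_{k-1}v_n$, with $v_{k-1}\ge 0$ (including the case $v_0=1$), so the cost does not increase. Hence some minimizer ends in $s_n$.

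The only subtlety we expect is the word \emph{equivalent} in the statement. For the pairwise and voucher costs, our reversal produces a minimizer with $v_n=s_n$ directly, not merely one equivalent to such a sequence. That is consistent with the statement, since a sequence is equivalent to itself.
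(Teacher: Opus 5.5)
Your final argument is correct and is essentially the paper's proof. Reversing the block $v_k,\dots,v_n$ that starts at $s_n$ is the Flipping Lemma with $i=k-1$, $j=n+1$ and a phantom $v_{n+1}=0$ (and $v_0=1$ for the voucher cost), so the cost changes by $-v_{k-1}(s_n-v_n)\le 0$; as in the paper, you also rotate for the loop cost and reflect when $s_n$ comes first. Delete the earlier detour (reversing only $v_{k+1},\dots,v_n$, and its left-side variant): it never moves $s_n$ to the end and plays no role in the proof.
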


\begin{proof}
For the loop cost, every sequence is equivalent to a sequence where the last term is $s_n$.

Now consider the pairwise cost. Assume for the sake of contradiction that $v_n\neq s_n$. If $v_1=s_n$, then we can reflect the sequence to get an equivalent sequence with $v_n=s_n$. Suppose $v_1 \neq s_n$. Let $v_i$ be the term before $s_n$, then $i > 0$. Let $v_{n+1}=0$. Now we apply Lemma~\ref{lem:flipping} with $v_i$ the term before $s_n$ and $j = n+1$. We have $v_i \ge 0=v_{n+1}=v_j$ and $v_{i+1}=s_n>v_n=v_{j-1}$. This means that after flipping our subsequence, the new sequence has a cost no greater than the current cost and has $v_n=s_n$. Thus, we can assume that $v_n = s_n$. If $v_i > 0$, then, after flipping, the cost decreases, creating a contradiction.

Now consider the voucher cost. Let $v_i$ be the term before $s_n$. Recall that we set $v_0=1$, which makes $v_i$ always defined, and also, the voucher cost can be expressed through the pairwise cost of vector $(1,\vec{v})$. Let $v_{n+1}=0$. Now we can proceed in the same way as in the pairwise cost proof above.
\end{proof}

\begin{lemma}
\label{lem:pn-1is1}
If $n>1$, then there exists a sequence $\vec{v}$ providing the minimal loop, respectively pairwise, cost with $v_{n-1} = s_1$ and $v_n = s_n$.
\end{lemma}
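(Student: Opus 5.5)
Start from Lemma~\ref{lem:pnisn}: there is a minimizing sequence $\vec v$ (for the loop, respectively pairwise, cost) with $v_n=s_n$. Keep $s_n$ fixed in the last position and use the Flipping Lemma~\ref{lem:flipping} to move $s_1$ into position $n-1$ without increasing the cost. If $v_{n-1}=s_1$ already, we are done. Otherwise let $v_{i+1}=s_1$ for some index with $i+1<n-1$, and flip the block $v_{i+1},\dots,v_{n-1}$; that is, apply the lemma with $j=n$. The new sequence ends in $\dots,v_i,v_{n-1},\dots,s_1,s_n$. The cost changes by the sign of $(v_i-v_n)(v_{i+1}-v_{n-1})=(v_i-s_n)(s_1-v_{n-1})$. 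Both factors are negative or zero, since $v_i\le s_n$ and $s_1<v_{n-1}$. The product is therefore $\ge 0$, so by the Flipping Lemma the cost does not increase. Because $\vec v$ was minimal, the new sequence is also minimal, and it has $v_{n-1}=s_1$, $v_n=s_n$.

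The main care is in defining $v_i$ at the left end, that is, when $s_1=v_1$ and so $i=0$. For the loop cost, indices wrap around, so $v_0=v_n=s_n$. The flip then concerns the block $v_1,\dots,v_{n-1}$ between two copies of $s_n$ and is really just a reflection, which leaves the loop cost unchanged. It is cleanest to first rotate so that $s_n$ is last and then argue with $v_i$ the loop-neighbour; equivalently, when $i=0$ simply reverse $v_1,\dots,v_{n-1}$ and note that the loop cost is preserved, since both neighbours of the block are $s_n$. For the pairwise cost with $i=0$, set a phantom $v_0=0$, as in the proof of Lemma~\ref{lem:pnisn} where $v_{n+1}=0$ was used. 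This is legitimate because $C_P(\vec v)=C_P((0,\vec v))$ by Proposition~\ref{prop:add0}. Then $v_0-s_n\le 0$ still holds, using non-negativity, and the same sign argument applies.

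I expect this boundary bookkeeping to be the only subtle step: checking that the phantom $0$ (pairwise) or the wrap-around $s_n$ (loop) keeps the factor $v_i-s_n$ non-positive, and that the flip does not disturb $v_n=s_n$. It does not, since the flipped block ends at index $n-1$. The case $n=2$ is trivial, because then $v_1=s_1$ and $v_2=s_2$ up to equivalence.
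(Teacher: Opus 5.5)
Your proposal is correct and follows the paper's proof. You start from a minimizer with $v_n=s_n$ given by Lemma~\ref{lem:pnisn}, flip the block running from $s_1$ through $v_{n-1}$ with $j=n$, and handle the left end with a phantom $v_0=0$ for the pairwise cost and with a reflection plus rotation for the loop cost when $v_1=s_1$. The only cosmetic difference is that the paper argues by contradiction using the strict inequality, which is available because the tags are distinct and $s_n>0$; your non-strict version gives the existence statement directly.
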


\begin{proof}
By Lemma~\ref{lem:pnisn}, we know that there exists a vector $\vec{v}$ with the minimal cost such that $v_n = s_n$. Suppose $v_{n-1} \ne s_1$ and $v_h = s_1$, where $h < n-1$. 

For the loop cost, if $v_1=s_1$ then we can reflect and rotate the sequence to get an equivalent sequence with $v_n=s_n$ and $v_{n-1} = s_1$.

Let $v_i$ be the term before $s_1$. For the loop cost, the term $s_1$ is not the first, so $i > 0$. For the pairwise cost, we let $v_0 = 0$, which ensures $v_i$ is always defined.

Now we apply the Flipping Lemma~\ref{lem:flipping} with $v_i$ as defined above and $j = n$. We have $v_i < v_j = v_n = s_n$ and $v_{i+1} = s_1 < v_{n-1} = v_{j-1}$. This means the new sequence has a smaller cost, which is a contradiction. Thus, $v_n=s_n$ and $v_{n-1} = s_1$.
\end{proof}

We now show that for the minimal loop cost, the maximum value is surrounded by the two smallest values.

\begin{lemma}
\label{lem:s2}
    For $n \ge 3$, the minimal loop cost is achieved when $v_1=s_{2}$, $v_{n-1} = s_1$, and $v_n = s_n$.
\end{lemma}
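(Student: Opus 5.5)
Start from Lemma~\ref{lem:pn-1is1}: it gives a minimizing sequence for the loop cost with $v_{n-1}=s_1$ and $v_n=s_n$. Because the loop is cyclic, $v_1$ is the other neighbor of $s_n$. So it remains to show that, among minimizers of this form, we may take $v_1=s_2$. For $n=3$ this holds automatically: the three entries are $s_2,s_1,s_3$. Assume $n\ge 4$ from now on.

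The main step is another application of the Flipping Lemma~\ref{lem:flipping}, now across the wrap-around. Suppose $v_1\ne s_2$, and let $v_h=s_2$ with $1<h<n-1$. Rotate the loop so that $s_n$ comes first. The sequence becomes
\[
s_n,\ v_1,\ v_2,\ \ldots,\ v_{h-1},\ s_2,\ v_{h+1},\ \ldots,\ v_{n-2},\ s_1.
\]
Take the flipping endpoints to be $s_n$ and the term just after $s_2$, namely $v_{h+1}$, where $h+1\le n-1$. The reversed block then runs from $v_1$ through $s_2$. In the lemma's notation, $v_i=s_n$, $v_{i+1}=v_1$, $v_{j-1}=s_2$, and $v_j=v_{h+1}$. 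Since $s_n$ is the maximum, $v_i-v_j>0$. Since $v_1\ne s_1$ (it sits next to $s_n$, while $s_1$ sits on the other side of $s_n$ and $n\ge 4$) and $v_1\ne s_2$, we get $v_1>s_2$, so $v_{i+1}-v_{j-1}>0$. The product is positive, so by the Flipping Lemma~\ref{lem:flipping} the flip strictly decreases the loop cost. This contradicts minimality.

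Two details need checking. First, the endpoint $v_j$ must not lie inside the reversed block; this holds because the block is $v_1,\dots,v_h$ and $j$ corresponds to $h+1$. Second, the flip does not move $s_1$ or $s_n$. If $h+1=n-1$, then $v_j=s_1$ and the argument is unchanged. After the flip, $s_n$ is adjacent to $s_2$ and still adjacent to $s_1$ via the wrap-around. Rotating back gives $v_1=s_2$, $v_{n-1}=s_1$, $v_n=s_n$.

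The hard part is only bookkeeping: the Flipping Lemma is stated for a linear sequence, so I work with the rotated sequence and check that the term $v_ns_1$ closing the loop is unaffected. Because the flip keeps the two outer entries $s_n$ and $s_1$ fixed, the argument actually shows more: every minimizer in the normalized form has $v_1=s_2$, so the statement holds for all minimizing permutations up to equivalence.
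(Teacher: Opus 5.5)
Your proof is correct and is essentially the paper's argument: the paper also uses the wrap-around convention $v_0=v_n=s_n$ (your rotation) and reverses the block $v_1,\dots,v_h$ lying between $s_n$ and the term after $s_2$, obtaining the positive product $(s_n-v_{h+1})(v_1-s_2)$ because $v_1\notin\{s_1,s_2,s_n\}$. Your closing claim about \emph{all} minimizers additionally needs every minimizer, not just one, to normalize to $v_{n-1}=s_1$, $v_n=s_n$; for the loop cost this follows from the strict inequality in the proof of Lemma~\ref{lem:pn-1is1}.
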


\begin{proof}
By Lemma~\ref{lem:pn-1is1}, we know that any sequence that provides the minimal loop cost is equivalent to a sequence $\vec{v}$ that satisfies $v_{n-1}=s_1,$ and $v_n=s_n$.

Let $s_2$ be in position $\ell \ne 1$. Then, since we look at a loop, we can assume that $v_0= v_n = s_n$. We apply the Flipping Lemma~\ref{lem:flipping}, where $i = 0$ and $j = \ell + 1$. The change in cost if we reverse the subsequence with indices $1$ to $\ell$ is $(v_0-v_{\ell+1})(v_1 - v_\ell) = (s_n-v_{\ell+1})(v_1 - s_2)$. Since $v_1$ is neither $s_1$ nor $s_2$, we have $v_1>s_2$; also $s_n>v_{\ell+1}$. Thus, this difference is positive, so the flip decreases the cost, a contradiction.
\end{proof}

\begin{corollary}
\label{cor:2smthn-11n}
    For $n \ge 4$, the minimal loop cost is achieved, when $v_1=s_{2}$, $v_{n-2} = s_{n-1}$, $v_{n-1} = s_1$, and $v_n = s_n$. 
\end{corollary}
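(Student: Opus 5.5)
We start from Lemma~\ref{lem:s2}. For $n \ge 3$ it gives a loop-cost minimizer $\vec v$ with $v_1 = s_2$, $v_{n-1} = s_1$ and $v_n = s_n$. It remains to place $s_{n-1}$ at position $n-2$, which is the position immediately before $s_1$. The situation mirrors the one in Lemma~\ref{lem:s2}: there the largest value $s_n$ was flanked by the two smallest values, and now the smallest value $s_1$ should be flanked by the two largest, $s_n$ on one side and $s_{n-1}$ on the other.

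Suppose $s_{n-1}$ sits at position $\ell$ with $\ell \ne n-2$. Since $n \ge 4$, the values $s_1, s_2, s_{n-1}, s_n$ are distinct and three of them already occupy positions $1$, $n-1$ and $n$. Hence $2 \le \ell \le n-3$, and in particular $n \ge 5$; when $n=4$ the statement is already forced by Lemma~\ref{lem:s2}.

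Apply the Flipping Lemma~\ref{lem:flipping} with $j = n-1$, so that $v_j = s_1$, and with $i = \ell - 1 \ge 1$. Reversing the block $v_\ell, \dots, v_{n-2}$ moves $s_{n-1}$ to position $n-2$ and leaves positions $1$, $n-1$ and $n$ untouched. The condition $i < j-1$ holds because $\ell - 1 < n-2$. The change in cost is governed by
\[(v_i - v_j)(v_{i+1} - v_{j-1}) = (v_{\ell-1} - s_1)(s_{n-1} - v_{n-2}).\]
Here $v_{\ell-1} > s_1$, because $s_1$ is located only at position $n-1$. Also $s_{n-1} > v_{n-2}$, because $v_{n-2}$ is neither $s_n$ nor $s_{n-1}$. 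The product is therefore positive. By the Flipping Lemma, the original cost exceeds the new one by exactly this product, so the flip strictly decreases the cost. This contradicts minimality, so $\ell = n-2$.

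The only delicate step is the bookkeeping of indices. One must check that the flipped block avoids positions $1$, $n-1$ and $n$, so that the conclusions of Lemma~\ref{lem:s2} survive the flip. One must also check that $v_{\ell-1}$ is a genuine interior entry, which it is because $\ell \ge 2$, so no wraparound is needed. Both facts follow from the range $2 \le \ell \le n-3$ established above. The argument in fact shows that every minimizer normalized as in Lemma~\ref{lem:s2} has $v_{n-2} = s_{n-1}$.
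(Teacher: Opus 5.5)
Your proof is correct, but it takes a different route from the paper. The paper does no new flipping. Instead it applies the Transcalation Lemma~\ref{lem:scaling} with $a=-1$, replacing each tag $v_i$ by $N-v_i$ for some $N>s_n$. This reverses the order of the tags but preserves the ranking of loop costs. Lemma~\ref{lem:s2} for the transformed tags says the largest, $N-s_1$, is flanked by the two smallest, $N-s_n$ and $N-s_{n-1}$. Translated back, $s_1$ is flanked by $s_n$ and $s_{n-1}$, and in a representative normalized by Lemma~\ref{lem:s2} the only free neighbor of $s_1$ is position $n-2$.

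The duality argument is shorter. It also makes precise the symmetry you describe in your first paragraph, extracting both flanking facts from a single lemma. However, combining the two flanking properties in one representative tacitly requires Lemma~\ref{lem:s2} to hold for \emph{every} minimizer up to equivalence, not just for some minimizer. That is true, because the flips in Lemmas~\ref{lem:pn-1is1} and~\ref{lem:s2} are strict, but it is not how Lemma~\ref{lem:s2} is stated.

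Your direct argument repeats the flipping technique from the proof of Lemma~\ref{lem:s2}, with $s_1$ as the anchor. It needs only the existential form of Lemma~\ref{lem:s2} and does not use the Transcalation Lemma at all. As you note, it also shows that every normalized minimizer has $v_{n-2}=s_{n-1}$.
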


\begin{proof}
    Suppose we subtract a number $N$ larger than $v_n$ from every coordinate of vector $\vec{v}$. We get a vector with all negative coordinates $(v_i - N)$. Changing the signs in every coordinate of the new vector, we get a vector with all positive coordinates $(N - v_i)$, which achieves the minimum when its maximum value $N-s_1$ is surrounded by the two smallest values $N-s_n$ and $N - s_{n-1}$. By the Transcalation Lemma~\ref{lem:scaling}, in the original permutation that achieves the minimum, the smallest value has to be surrounded by the two largest values.

    It follows that after we choose a representative with the minimal loop cost with $v_1=s_{2}$, $v_{n-1} = s_1$, and $v_n = s_n$, which is possible by Lemma~\ref{lem:s2}, then $v_{n-2} = s_{n-1}$.
\end{proof}

\subsection{Minimal loop cost}

We calculated the pattern for the minimal loop cost using a program and verified that it follows the order pattern in the following theorem for the number of vouchers up to 12.

\begin{example}
\label{ex:min4-5}
    The minimum cost for vouchers in the set $\{1,2,3,4\}$ is achieved by the sequence $(2,3,1,4)$. The minimum cost for the set $\{1,2,3,4,5\}$ is achieved by the sequence $(2,3,4,1,5)$. 
\end{example}

\begin{theorem}
\label{thm:loopmin}
A permutation $v_1,v_2,\ldots,v_n$ of an increasing non-negative sequence $s$ provides the minimum loop cost if and only if it is equivalent to a sequence that is order-isomorphic to
\[2,\ n-2,\ 4,\ \dots,\ n-3,\ 3,\ n-1,\ 1,\ n.\]
\end{theorem}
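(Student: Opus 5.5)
We argue by strong induction on $n$, peeling off the two extreme values at each step. The earlier results already fix the frame. By Lemma~\ref{lem:s2} and Corollary~\ref{cor:2smthn-11n}, every minimizer is equivalent to one with
\[v_1=s_2,\quad v_{n-2}=s_{n-1},\quad v_{n-1}=s_1,\quad v_n=s_n.\]
Reading the loop cyclically, the neighbourhood of the two extremes is therefore
\[s_{n-1},\ s_1,\ s_n,\ s_2.\]
Equivalently, the smallest value sits between the two largest, and the largest sits between the two smallest. The target pattern
\[2,\ n-2,\ 4,\ \dots,\ n-3,\ 3,\ n-1,\ 1,\ n\]
has exactly this local structure. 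Deleting $1$ and $n$ from it leaves a cyclic pattern on $\{2,\dots,n-1\}$ of the same shape: its minimum $2$ sits between $n-2$ and $n-1$, and its maximum $n-1$ sits between $3$ and $2$. So the natural plan is to contract the extremes and recurse.

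\textbf{Step 1 (contraction).} Take a minimizer $\vec v$ in the normalized form above. Remove $s_1$ and $s_n$, closing the loop so that $s_{n-1}$ becomes adjacent to $s_2$. The cost changes by
\[C_L(\vec v)-C_L(\vec w)=s_{n-1}s_1+s_1s_n+s_ns_2-s_{n-1}s_2,\]
where $\vec w=(v_1,\dots,v_{n-2})$ is the contracted loop. This difference depends only on the set $s$, not on how the middle values $v_2,\dots,v_{n-3}$ are arranged. Conversely, any loop on $\{s_2,\dots,s_{n-1}\}$ in which $s_{n-1}$ is adjacent to $s_2$ can be expanded back by inserting $s_1,s_n$ between them. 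Therefore minimizing $C_L(\vec v)$ over loops with the forced frame is the same as minimizing $C_L(\vec w)$ over loops on $n-2$ values in which the maximum $s_{n-1}$ and the minimum $s_2$ are adjacent.

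\textbf{Step 2 (removing the adjacency constraint).} I expect this to be the main obstacle. We need that the unconstrained minimum of $C_L$ on $\{s_2,\dots,s_{n-1}\}$ is attained, and attained only, by loops where the maximum and minimum are adjacent. Lemma~\ref{lem:s2} gives this for $n-2\ge3$, since it places $s_n'$ between $s_1'$ and $s_2'$ for the subset. It applies because the lemma holds for any distinct non-negative set; alternatively, the Transcalation Lemma~\ref{lem:scaling} lets us shift the values so that non-negativity is harmless. Hence the constrained and unconstrained minima coincide. The induction hypothesis then says that $\vec w$ is equivalent to the pattern for $n-2$, and expanding gives the pattern for $n$.

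\textbf{Step 3 (orientation bookkeeping).} Some care is needed here. When we expand, $s_1,s_n$ must go in the gap between $s_{n-1}$ and $s_2$, in the order that puts $s_1$ next to $s_{n-1}$. I will check that the pattern for $n-2$, relabelled by $k\mapsto k+1$ and then expanded, gives exactly
\[2,\ n-2,\ 4,\ \dots,\ 3,\ n-1,\ 1,\ n\]
up to rotation and reflection.

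\textbf{Base cases and the ``only if'' direction.} The base cases are $n=2,3$, where every permutation is equivalent, and $n=4,5$, which can be checked directly or by Example~\ref{ex:min4-5}. For uniqueness, Lemmas~\ref{lem:s2} and Corollary~\ref{cor:2smthn-11n} as proved give only existence of a minimizer in the normalized form. I will reprove them with strict inequalities: for distinct values, each flip there strictly decreases the cost. The one exception is when $s_1=0$ makes a factor vanish, and that case must be inspected separately; it is the second delicate point. Granting this, every minimizer has the frame, and the contraction argument in Steps 1 and 2 transfers uniqueness from $n-2$ to $n$.
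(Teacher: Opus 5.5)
Your proposal is correct and is essentially the paper's proof: fix the frame $s_{n-1}, s_1, s_n, s_2$ via Corollary~\ref{cor:2smthn-11n}, delete $s_1$ and $s_n$, observe that the remaining maximum and minimum are adjacent so the induction hypothesis for $n-2$ values applies, and relabel. Your Step~1 cost identity, the constrained-versus-unconstrained argument of Step~2, and the strict-inequality treatment of the ``only if'' direction make explicit what the paper leaves implicit, and the case $s_1=0$ needs no separate inspection, because for loops every flip used in Lemmas~\ref{lem:pn-1is1} and~\ref{lem:s2} (and hence in Corollary~\ref{cor:2smthn-11n}) has both factors equal to differences of distinct tags, so every minimizer has the frame.
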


\begin{proof}
    We prove this by induction. The base cases are provided by direct calculation, see Example~\ref{ex:min4-5}.

    Assume that the statement is true for $n$ vouchers. We now consider $n+2$ vouchers. By Corollary~\ref{cor:2smthn-11n}, the minimal loop cost is achieved on a sequence that is order-isomorphic to 
    \[2,\ \dots,\ n+1,\ 1,\ n+2,\]
    where the dots represent the order that is not proven yet. If we remove the vouchers $s_1$ and $s_{n+2}$, the cheapest and most expensive among the remaining vouchers are adjacent on the loop. That means, we can achieve the minimum if we arrange the remaining vouchers according to our induction hypothesis.

    Thus, we consider the sequence of $n$ price tags consisting of $s_2$, $s_3$, $\ldots$, $s_n$, $s_{n+1}$. By the induction hypothesis, if we make the sequence order-isomorphic to
    \[2,\ n-2,\ 4,\ \dots,\ n-3,\ 3,\ n-1,\ 1,\ n,\]
    the loop cost is minimized. It follows that the vouchers are in order 
    \[s_3,\ s_{n-1},\ s_5,\ \dots,\ s_{n-2},\ s_4,\ s_n,\ s_2,\ s_{n+1}.\]
    We now shift the last voucher to the left and reverse the order to get
    \[s_2,\ s_{n},\ s_4,\ s_{n-2},\ \dots,\ s_5,\ s_{n-1},\ s_3,\ s_{n+1}.\]
    Attaching the two vouchers $s_1$ and $s_{n+2}$ at the end, we obtain the order needed to complete the induction.
\end{proof}

\begin{example}
    We know that one of the sequences providing the minimal loop cost is order-isomorphic to
    \[2,\ n-2,\ 4,\ \dots,\ n-3,\ 3,\ n-1,\ 1,\ n.\]
    If we shift this to the right by 1, we get a sequence with the same loop cost that is order-isomorphic to 
    \[n,\ 2,\ n-2,\ 4,\ \dots,\ n-3,\ 3,\ n-1,\ 1.\]
    For the sequence of natural numbers, this is the same minimal loop cost we described in Section~\ref{sec:prelim}.
\end{example}

\subsection{Minimal Pairwise cost}

We can derive the minimum pairwise cost from the minimum loop cost.

\begin{theorem}
\label{thm:minpairwise}
    A permutation $\vec{v}$ of distinct non-negative price tags provides the minimum pairwise cost if and only if it is equivalent to a sequence that is order-isomorphic to
\[n-1,\ 2,\ n-3,\ \dots,\ n-4,\ 3,\ n-2,\ 1,\ n.\]
\end{theorem}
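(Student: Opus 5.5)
Following the remark after Theorem~\ref{thm:maxloop}, I will reduce the pairwise problem on $n$ values to the loop problem on $n+1$ values via Proposition~\ref{prop:add0}. Given distinct non-negative $s_1<\dots<s_n$, pick any $c$ with $0\le c<s_1$ (if $s_1=0$, handle separately below) and subtract $c$ from everything; this changes the problem, so instead I take the cleaner route: append the value $0$. We have $C_P(\vec v)=C_L((\vec v,0))$, so minimizing the pairwise cost of $\vec v$ is the same as minimizing the loop cost over permutations of the $n+1$ values $\{0,s_1,\dots,s_n\}$ in which $0$ is placed last. Since every loop arrangement can be rotated so that $0$ is last, this is exactly the unrestricted minimal loop cost for these $n+1$ values, provided $s_1>0$ so that the values are distinct.

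Next I apply Theorem~\ref{thm:loopmin} to the $n+1$ distinct non-negative values $0<s_1<\dots<s_n$. The minimizers are exactly the sequences equivalent to one order-isomorphic to
\[2,\ n-1,\ 4,\ \dots,\ n-2,\ 3,\ n,\ 1,\ n+1,\]
where the pattern label $1$ corresponds to $0$. I then rotate so that $0$ comes last and delete it, which gives $n+1,2,n-1,4,\dots,n-2,3,n$. Subtracting $1$ from all labels yields the pattern $n,1,n-2,3,\dots,n-3,2,n-1$ on $s_1,\dots,s_n$. Reversing, and checking that this matches the stated pattern $n-1,2,n-3,\dots,n-4,3,n-2,1,n$, will be done by tracking the two interlaced subsequences, decreasing odd positions and increasing even positions. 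I will do this separately for $n$ even and $n$ odd, since the ends of the pattern depend on parity.

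For the ``only if'' direction, the two rotations placing $0$ last come from a loop and its reflection. Loop-equivalence therefore becomes pairwise equivalence, i.e.\ reversal, so uniqueness transfers directly. The main obstacle I expect is this bookkeeping of the index pattern under rotation and reflection. I would verify it on $n=4,5$ against Example~\ref{ex:min4-5} before writing the general index formula.

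The case $s_1=0$ needs its own argument, because appending $0$ creates a repeated value. Here I will use Lemma~\ref{lem:pnisn} and Lemma~\ref{lem:pn-1is1} together with the Flipping Lemma. If some minimizer puts $0$ at an interior position, then the terms on either side of $0$ contribute nothing to each other. I would try to show, by flipping a segment adjacent to $0$, that moving $0$ toward the pattern position never increases the cost. I then need to rule out alternative minimizers when ties give zero change in the Flipping Lemma. If this becomes messy, I would fall back on the Transcalation Lemma: shift all values by $\varepsilon>0$ and compare the pairwise costs. The pairwise cost transforms with an extra term $\varepsilon(2\sum v_i - v_1-v_n)$ that depends on the endpoints, so I must check that the endpoint pair $(s_{n-1},s_n)$ in the pattern is consistent with minimizing this extra term, which it is, since it maximizes $v_1+v_n$.
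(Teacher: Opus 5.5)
Your treatment of the case $s_1>0$ is the paper's proof. The paper adjoins a strictly smaller tag through Corollary~\ref{cor:FromLoopToPairwise}, and you adjoin $0$ through $C_P(\vec v)=C_L((\vec v,0))$. Both then read the pattern off Theorem~\ref{thm:loopmin} for $n+1$ tags, and uniqueness transfers because the only two members of a loop class with the adjoined tag last are a path and its reverse.

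The gap is the case $s_1=0$, and it cannot be closed as you plan. There the ``only if'' direction is false for $n\ge 4$, so the alternative minimizers you intend to rule out really exist. In the pattern, the tag $s_1=0$ sits in position $n-1$, between $v_{n-2}$ and $v_n=s_n$. Hence the cost equals $C_P((v_1,\dots,v_{n-2}))$. Reversing this block is a zero-change flip in the Flipping Lemma, because with the padding $v_0=0$ the factor $v_0-v_{n-1}$ is $0-0$. It gives the order $n-2,\ 3,\ \dots,\ 2,\ n-1,\ 1,\ n$ with the same cost. This order is not the pattern, since the block has at least two distinct entries. It is not the pattern's reverse either, since it does not start with $s_n$. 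Concretely, for the tags $\{0,1,2,3\}$ the minimum pairwise cost is $2$. It is attained by $(2,1,0,3)$ and by $(1,2,0,3)$, which are not reflections of each other.

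Your $\varepsilon$-shift fallback cannot rescue uniqueness either, because the endpoint term points the wrong way. Let $\vec w$ be the pattern arrangement. Since $\vec w$ maximizes $v_1+v_n$, it \emph{minimizes} the extra term $\varepsilon(2\sum v_i-v_1-v_n)$. So the strict inequality $C_P(\vec w+\vec\varepsilon)<C_P(\vec v+\vec\varepsilon)$ becomes $C_P(\vec w)-C_P(\vec v)<\varepsilon\cdot(\text{something}\ge 0)$. Letting $\varepsilon\to 0$ gives only $C_P(\vec w)\le C_P(\vec v)$, which is the ``if'' direction.

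What is actually provable for $s_1=0$ is this: the pattern is a minimizer, and up to reflection the minimizers are exactly the pattern and its block-reversed version. The strict flips from the proofs of Lemmas~\ref{lem:pnisn} and~\ref{lem:pn-1is1} still work here. Applied on both sides of $s_n$, they force $s_n$ to an end with $0$ beside it. That leaves a positive-tag instance on $s_2,\dots,s_{n-1}$ whose orientation is free.

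So the statement needs either $s_1>0$, or a coarser equivalence that also allows reversing the block before the $0$. This is analogous to the voucher-cost equivalence around a tag equal to $1$. The paper's own proof has the same blind spot, since Corollary~\ref{cor:FromLoopToPairwise} only ever produces tags $v_i-s_1>0$.
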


\begin{proof}
By Corollary~\ref{cor:FromLoopToPairwise}, we consider the order of a sequence of $n+1$ vouchers minimizing the loop cost with the smallest voucher at the end. From Theorem~\ref{thm:loopmin}, we know that it is order-isomorphic to
\[n+1,\ 2,\ n-1,\ 4,\ \dots,\ n-2,\ 3,\ n,\ 1.\]
Using Corollary~\ref{cor:FromLoopToPairwise}, one of the sequences of vouchers realizing the minimum pairwise cost must be order-isomorphic to
\[n,\ 1,\ n-2,\ \dots,\ n-3,\ 2,\ n-1,\]
which, after a reflection, is the same as in the statement, concluding the proof.
\end{proof}

We have the following connection between the minimum pairwise cost for a sequence $S$ of $n$ vouchers and the minimum loop cost for a sequence $S'$ of $n-1$ vouchers, in which the last voucher from $S$ is removed.

\begin{proposition}
    We have \[m_P^S(n) = m_L^{S'}(n-1) + s_1(s_n-s_{n-1}).\]
\end{proposition}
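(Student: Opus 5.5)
The plan is to compute both sides explicitly from the extremal orders already identified, and compare them term by term. By Theorem~\ref{thm:minpairwise}, $m_P^S(n)$ is attained on the permutation of $s_1,\dots,s_n$ order-isomorphic to
\[n-1,\ 2,\ n-3,\ \dots,\ n-4,\ 3,\ n-2,\ 1,\ n,\]
so $s_n$ is the last entry and its only neighbor is $s_1$. Hence
\[m_P^S(n) = C_P(w_1,\dots,w_{n-2},s_1) + s_1 s_n,\]
where $(w_1,\dots,w_{n-2},s_1)$ is the prefix of length $n-1$. This prefix is a permutation of $S'=\{s_1,\dots,s_{n-1}\}$, and it is order-isomorphic to the prefix pattern $n-1,\ 2,\ n-3,\ \dots,\ 3,\ n-2,\ 1$. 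This prefix pattern is a sequence of length $n-1$ with the largest value $n-1$ first and the smallest value $1$ last.

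Next, compare this prefix with the minimal loop order for $n-1$ vouchers. By Theorem~\ref{thm:loopmin} (and the example following it, shifted by one), the minimal loop cost for $S'$ is attained on an order isomorphic to $n-1,\ 2,\ n-3,\ 4,\ \dots,\ 3,\ n-2,\ 1$. This is precisely the prefix pattern above, so the identification is a matter of checking that the interlacing indices agree. In that loop, the largest element $s_{n-1}$ sits first and $s_1$ sits last, so the closing term of the loop is $s_1 s_{n-1}$. Therefore
\[m_L^{S'}(n-1) = C_P(w_1,\dots,w_{n-2},s_1) + s_1 s_{n-1}.\]
Subtracting the two displays gives $m_P^S(n)-m_L^{S'}(n-1)=s_1 s_n - s_1 s_{n-1} = s_1(s_n-s_{n-1})$, which is the claim.

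The main obstacle is the index bookkeeping: one must confirm that the pairwise-minimizing pattern of length $n$, with its last entry deleted, coincides exactly with a rotation or reflection of the loop-minimizing pattern of length $n-1$ from Theorem~\ref{thm:loopmin}. Both are the odd-decreasing and even-increasing interlacing, but the parity of $n$ changes which end the interlacing stops at. I would handle this by writing both patterns explicitly for $n$ even and $n$ odd and matching them term by term. The case $n=2$, and small cases generally, I would check directly.
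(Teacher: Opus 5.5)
Your argument is correct and is essentially the paper's proof run in the opposite direction. The paper starts from the loop-minimizing order for $S'$, rotates $s_{n-1}$ to the front, appends $s_n$, and invokes Theorem~\ref{thm:minpairwise}; you start from the pairwise-minimizing order and delete $s_n$. Both arguments then compare the loop's closing term $s_1s_{n-1}$ with the pairwise term $s_1s_n$.

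The index bookkeeping you flag as the main obstacle is immediate: substituting $m=n-1$ into the rotated loop pattern $m,\ 2,\ m-2,\ 4,\ \dots,\ m-3,\ 3,\ m-1,\ 1$ gives exactly your prefix $n-1,\ 2,\ n-3,\ \dots,\ 3,\ n-2,\ 1$, so no parity split is needed. Also, your description of both patterns as an odd/even interlacing is inaccurate when the loop length is even, e.g.\ $8,2,6,4,5,3,7,1$.
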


\begin{proof}
The sequence realizing the minimum loop cost of $n-1$ vouchers is order-isomorphic to 

\[2,\ n-3,\ 4,\ \dots,\ n-4,\ 3,\ n-2,\ 1,\ n-1.\]
Shifting $n-1$ to the front, we get
\[n-1,\ 2,\ n-3, \dots,\ 3,\ n-2,\ 1.\]
By Theorem~\ref{thm:minpairwise}, if we add $n$ at the end of this sequence, we get a sequence realizing the minimum pairwise cost for $n$ vouchers. It follows that for a sequence $S$ of $n$ terms and a subsequence $S'$ of $S$ with the last term removed, we have
\[m_P^S(n) = m_L^{S'}(n-1) + s_1(s_n-s_{n-1}).\]
\end{proof}

\begin{example}
\label{ex:mpn-mLn-1+1}
    In particular,
    \[m_P(n) = m_L(n-1) + 1.\]
\end{example}

\subsection{Minimal Voucher cost}

Before proving the theorem, we define the alternating max-to-min and min-to-max boundary-respecting construction. We initialize the value of the left boundary to $1$ and the value of the right boundary to $0$.

\begin{definition}[Alternating max-to-min and min-to-max boundary-respecting construction] 
The \textit{alternating max-to-min and min-to-max boundary-respecting construction} works as follows. We start by picking a voucher with the greatest price tag $x$ and put it on the right, replacing the value for the right boundary with $x$. Then pick a voucher with the smallest price tag that has not been placed, $y$, and place it next to the largest current boundary value, next to the other placed vouchers, and on the inside. Change the corresponding boundary value to $y$. If there is a tie, we can choose either side. Continue alternating: place the smallest remaining voucher next to the larger boundary, then the largest remaining voucher next to the smaller boundary.
\end{definition}

\begin{example}
Suppose we have price tags $0.2$, $2$, $4$, and $5$. Initially, the left boundary is $1$ and the right boundary is $0$, so the right boundary is smaller. We first place $5$, the largest value not yet placed, on the right, and the right boundary changes to $5$. We then place the $0.2$, the smallest value not yet placed, which goes to the right since $5>1$. The right boundary changes to $0.2$. Now, we place $4$ next to $0.2$, and the right boundary changes to $4$. Finally, we place $2$ next to $4$, and the right boundary becomes $2$. So the sequence that minimizes the voucher cost is $(2, 4, 0.2, 5)$.
\end{example}

\begin{theorem}
    If we follow the alternating max-to-min and min-to-max boun\-dary-respecting construction with a given set of distinct non-negative price tags, we get the vouchers in the order that realizes the minimum voucher cost.
\end{theorem}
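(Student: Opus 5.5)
Following the pattern of the proof of Theorem~\ref{thm:maxorderiso}, I would argue by induction on the number of placed vouchers. The voucher cost is viewed as the pairwise cost of $(1,\vec{v},0)$; the trailing phantom $0$ is harmless, since it only adds the product $v_n\cdot 0$. The sequence is then built from the outside in. At each stage the unplaced vouchers occupy a contiguous block between a left boundary value $v_\ell$ and a right boundary value $v_r$. Initially $v_\ell=1$ and $v_r=0$. The invariant is that there exists a minimizing sequence agreeing with everything placed so far. Once the block is filled, each interior product involves only vouchers inside the block, and the two edge products involve $v_\ell$ and $v_r$. The step to be proved is therefore local: among all fillings of the block, a minimizing one places the prescribed voucher next to the prescribed boundary.

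For a \emph{max step}, let $x$ be the largest remaining value and suppose $v_r<v_\ell$ (the case $v_\ell<v_r$ is symmetric). I would apply the Flipping Lemma~\ref{lem:flipping} with $v_i=v_r$, taking the right boundary as the outer index and reading the block from the right. Let $j-1$ be the position of $x$, so $v_{j-1}=x$ and $v_{i+1}$ is the voucher currently adjacent to $v_r$. The outer neighbour of $x$ is $v_j$, which is either another block element or $v_\ell$; in both cases $v_j\ge v_r$.

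\begin{itemize}
\item If $v_j$ is a block element, all block elements exceed the boundary value here, because the previous step placed the smallest value.
\item If $v_j=v_\ell$, then $v_j>v_r$ by assumption.
\item Also $v_{i+1}<x=v_{j-1}$.
\end{itemize}

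Hence $(v_i-v_j)(v_{i+1}-v_{j-1})\ge 0$, with strict inequality unless there are ties, and the flip does not increase the cost. This places $x$ next to the smaller boundary.

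For a \emph{min step}, let $y$ be the smallest remaining value and suppose $v_\ell>v_r$. The same argument applies with all inequalities reversed: the neighbour $v_j$ of $y$ on the far side is at most $v_\ell$, and $v_{i+1}>y$. The product is again nonnegative, so $y$ can be moved next to the larger boundary.

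The main obstacle is justifying the bound on $v_j$ in each step, namely that every remaining block element lies on the correct side of the relevant boundary. I would handle this with an auxiliary invariant. After a max step, the new boundary is larger than all remaining values. After a min step, the new boundary is smaller than all remaining values. The other, older boundary must also be compared, and this is where alternation matters. I would verify that after each max step the old boundary is a previously placed minimum (or the initial $1$ or $0$), and that the construction always attaches to the appropriate side.

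The initial steps need separate care because the boundaries $1$ and $0$ are not price tags, and values below $1$ break the naive comparisons. For these I would instead invoke Lemma~\ref{lem:pnisn}, which already gives $v_n=s_n$ for the first step, and then check the case analysis for the second step by hand, using $v_0=1$. Ties are resolved by the freedom allowed in the construction. The theorem follows by combining the steps.
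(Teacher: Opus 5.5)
Your proposal is correct and follows essentially the same route as the paper. The paper also uses Lemma~\ref{lem:pnisn} for the first placement, then runs an induction in which each step applies the Flipping Lemma between the prescribed boundary and the prescribed voucher, using the fact that the boundary created in the previous step lies strictly above (after a max step) or below (after a min step) every unplaced voucher. That fact alone already supplies your ``auxiliary invariant'' and your hand-checked second step. The larger (resp.\ smaller) of the two boundaries is at least (resp.\ at most) the just-created one, so you need no separate comparison with the older boundary and no special treatment of the initial values $1$ and $0$.
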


\begin{proof}

Let the distinct non-negative price tags be $s_1<s_2<\cdots<s_n$.

For voucher cost, we regard the sequence as having boundary values $v_0=1$ on the left and $v_{n+1}=0$ on the right; equivalently, we are applying the Flipping Lemma to the pairwise cost of
\[
(1,v_1,\ldots,v_n,0).
\]

By Lemma~\ref{lem:pnisn}, there is a minimum-cost arrangement with $v_n=s_n$. Thus, the first step of the construction, placing the largest voucher next to the right boundary $0$, is valid.

We now proceed by induction. Suppose that after several steps, there is a minimum-cost arrangement agreeing with the construction so far. The placed
vouchers occupy the positions outside one contiguous unfilled interval. Let $\ell$ and $r$ be the positions of its left and right boundaries, with boundary values $v_\ell$ and $v_r$.

Suppose first that the next voucher prescribed by the construction is the smallest remaining voucher, say $s_k$. This means that the previous step placed the largest then-remaining voucher, so the boundary created in the previous step is larger than every voucher still unplaced. If $v_\ell<v_r$, the construction places $s_k$ at position $r-1$. Suppose
instead that, in a compatible minimum arrangement, $s_k$ is at position $t<r-1$. Apply the Flipping Lemma with $i=t-1$ and $j=r$. Then
\[
v_{i+1}-v_{j-1}=s_k-v_{r-1}<0.
\]
Also $v_i-v_j<0$: if $t=\ell+1$, this is $v_\ell-v_r<0$, and otherwise $v_i$ is still unplaced and hence is smaller than $v_r$. Thus
\[
(v_i-v_j)(v_{i+1}-v_{j-1})>0,
\]
so the flip strictly decreases the voucher cost, a contradiction. Hence $s_k$ must be placed at $r-1$. The case $v_\ell>v_r$ is symmetric.

If $v_\ell=v_r$, the same argument excludes all interior positions for $s_k$. Thus $s_k$ may be assumed to be at one of the two ends of the
unfilled interval. Reversing the entire unfilled interval does not change the
cost, since
\[
(v_\ell-v_r)(v_{\ell+1}-v_{r-1})=0.
\]
Therefore, either end is compatible with a minimum-cost arrangement.

Now suppose that the next voucher prescribed by the construction is the largest remaining voucher, say $s_k$. This means that the previous step
placed the smallest then-remaining voucher, so the boundary created in the previous step is smaller than every voucher still unplaced. If $v_\ell<v_r$, the construction places $s_k$ at position $\ell+1$.
Suppose instead that, in a compatible minimum arrangement, $s_k$ is at position $t>\ell+1$. Apply the Flipping Lemma with $i=\ell$ and $j=t+1$. Then
\[
v_{i+1}-v_{j-1}=v_{\ell+1}-s_k<0.
\]
Also $v_i-v_j<0$: if $t=r-1$, this is $v_\ell-v_r<0$, and otherwise $v_j$ is still unplaced and hence is larger than $v_\ell$. Therefore
\[
(v_i-v_j)(v_{i+1}-v_{j-1})>0,
\]
so the flip strictly decreases the voucher cost, again a contradiction. Hence $s_k$ must be placed at $\ell+1$. The case $v_\ell>v_r$ is symmetric, and the tie case $v_\ell=v_r$ is handled as above.

Thus, after every step, there exists a minimum-cost arrangement compatible with the choices made by the construction. By induction, the alternating
max-to-min and min-to-max boundary-respecting construction produces an ordering realizing the minimum voucher cost.
\end{proof}

\begin{corollary}
\label{cor:minvouchermorethan1}
If $s_1 \ge 1$, permutations of a set of vouchers realizing the minimum voucher cost are equivalent to a permutation that is order-isomorphic to
\[n-1,\ 2,\ n-3,\ 4,\ \ldots,\ 3,\ n-2,\ 1,\ n.\]
\end{corollary}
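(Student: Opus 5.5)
The plan is to run the alternating max-to-min and min-to-max boundary-respecting construction from the preceding theorem explicitly when $s_1 \ge 1$. We track the two boundary values at each step, read off the resulting order, and then argue about uniqueness up to equivalence.

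Initially the left boundary is $1$ and the right is $0$. Step one places $s_n$ on the right, so the right boundary becomes $s_n$. Next, $s_1$ goes next to the larger boundary, which is the right one since $s_n > s_1 \ge 1$; the right boundary becomes $s_1$. Now we compare the boundaries $1$ and $s_1$.
\begin{itemize}
\item If $s_1>1$, the smaller boundary is the left one, $1$, so $s_{n-1}$ goes on the left and the left boundary becomes $s_{n-1}$.
\item If $s_1=1$, there is a tie, and either choice is allowed by the construction.
\end{itemize}
In the first case the boundaries are $s_{n-1}$ (left) and $s_1$ (right). The next smallest, $s_2$, goes next to the larger boundary $s_{n-1}$, so it is placed on the left. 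Then $s_{n-2}$ goes next to the smaller boundary, now $s_1$ on the right. Then $s_3$ goes next to the larger boundary $s_{n-2}$, on the right.

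I would formalize this by induction. The invariant is that after each pair of steps one boundary is a small value $s_a$ and the other a large value $s_b$, with every unplaced value strictly between them. Distinctness then makes the comparisons strict, so the construction is forced from this point on. With the alternation, the values fill in from both ends toward the middle, producing
\[
s_{n-1},\ s_2,\ s_{n-3},\ s_4,\ \dots \qquad\text{and}\qquad \dots,\ s_3,\ s_{n-2},\ s_1,\ s_n,
\]
which is the displayed pattern. This is the same bookkeeping that gives the minimal pairwise cost in Theorem~\ref{thm:minpairwise}. Indeed, a slicker route is to observe that when $v_n=s_n$ and $s_1\ge1$, the problem reduces to the pairwise case. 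One could use $C_V(\vec v)=C_P((1,\vec v))$, note that $1\le s_1$ acts as a new smallest element, apply Theorem~\ref{thm:minpairwise} to the $n+1$ tags $\{1\}\cup s$, and check that the minimizer has $1$ at an end.

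The "if and only if/equivalent" part needs care about uniqueness. I would check that the Flipping Lemma argument in the construction theorem yields strict inequalities at every forced step, so the minimizer is unique whenever $s_1>1$. When $s_1=1$, the single tie at the third step gives two outcomes. These differ exactly by reversing the prefix before the voucher $1$, which is precisely the voucher-cost equivalence from the Definition.

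The main obstacle is that the construction theorem only asserts that the construction produces \emph{a} minimizer, not all of them. To get "all minimizers are equivalent" I would redo its induction, showing at each step that any minimum arrangement must agree with the construction. The strict decrease from the Flipping Lemma rules out any alternative placement whenever the boundaries differ. I would also verify that Lemma~\ref{lem:pnisn} can be strengthened to force $v_n=s_n$ here, since $v_i>0$ when $s_1\ge1$.
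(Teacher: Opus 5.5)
Your main route is what the paper has in mind; it states the corollary without proof, right after the construction theorem. That route traces the alternating construction with $s_1\ge 1$, notes that every boundary comparison is strict except the tie $1=s_1$ at the third step, and identifies the two outcomes by reversing the prefix before the voucher $1$.

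You are also right that the ``all minimizers'' direction needs two strict ingredients. The first is the strict form of Lemma~\ref{lem:pnisn}: there the flip changes the cost by $v_i(s_n-v_n)>0$, since $v_i\ge 1$. The second is the set of strict inequalities already present in the construction's induction. At the tie you should also say explicitly that interior positions are excluded strictly, which holds because every unplaced tag exceeds $1$.

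Two corrections, however.

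\textbf{The ``slicker route'' does not work.} When $s_1>1$, Theorem~\ref{thm:minpairwise} applied to the $n+1\ge 3$ distinct tags $\{1\}\cup s$ says every pairwise minimizer has its smallest tag $1$ second from an end, next to the largest tag. So the check that $1$ sits at an end fails. Already for $1<s_1<s_2$, the pairwise minimizer of $\{1,s_1,s_2\}$ is $(s_1,1,s_2)$ with cost $s_1+s_2$. This is strictly below the voucher minimum $s_1+s_1s_2$. Hence that reduction only yields a lower bound that is not attained. For $s_1=1$ the augmented tags are not even distinct.

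\textbf{Your invariant is slightly misstated.} All unplaced values lie strictly between the two boundaries only right after a largest-remaining voucher is placed. Right after a smallest-remaining one is placed, both boundaries lie below all unplaced values. This is harmless, since all you actually use is that the two boundary values differ at every comparison other than the step-three tie.
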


From the corollary above, we have the following corollary.

\begin{corollary}
\label{cor:mvThroughmp}
    We have
    \[m_V^S(n) = m_P^S(n) + s_{n-1}.\]
\end{corollary}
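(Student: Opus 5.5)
We use the explicit minimizers from Corollary~\ref{cor:minvouchermorethan1} and Theorem~\ref{thm:minpairwise}. In the setting of that corollary ($s_1\ge 1$), they turn out to be the \emph{same} ordering. Let $\vec v$ be the permutation of $s$ that is order-isomorphic to
\[n-1,\ 2,\ n-3,\ 4,\ \ldots,\ 3,\ n-2,\ 1,\ n.\]
By Theorem~\ref{thm:minpairwise}, $\vec v$ realizes $m_P^S(n)$. By Corollary~\ref{cor:minvouchermorethan1}, every voucher-cost minimizer is equivalent to $\vec v$, so $\vec v$ also realizes $m_V^S(n)$.

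Given this, we evaluate the difference on $\vec v$. From the definitions,
\[C_V(\vec v)=v_1+C_P(\vec v).\]
The first entry of $\vec v$ is the one order-isomorphic to $n-1$, namely $v_1=s_{n-1}$. Therefore
\[m_V^S(n)=C_V(\vec v)=s_{n-1}+C_P(\vec v)=s_{n-1}+m_P^S(n).\]

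The only delicate point is the hypothesis. Corollary~\ref{cor:minvouchermorethan1} needs $s_1\ge 1$, so the identity should be stated under that assumption; this covers the natural numbers. The sequence was fixed order-isomorphically rather than up to equivalence, so there is no ambiguity about which entry comes first. Reflections are irrelevant here: the pairwise minimizer from Theorem~\ref{thm:minpairwise} is taken in exactly the orientation above, and the voucher minimizer from Corollary~\ref{cor:minvouchermorethan1} begins with $s_{n-1}$ as stated.

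If $s_1<1$, the identity can fail, because the alternating construction then sends the small vouchers to different places. In that case I would restrict the corollary to $s_1\ge1$.
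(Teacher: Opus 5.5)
Your proof is correct and follows the paper's argument. The paper obtains this corollary directly from Corollary~\ref{cor:minvouchermorethan1}: the voucher-minimizing order coincides with the pairwise-minimizing order of Theorem~\ref{thm:minpairwise} and begins with $s_{n-1}$, so $C_V=v_1+C_P$ supplies exactly the extra $s_{n-1}$. You are also right that the hypothesis $s_1\ge 1$ (and implicitly $n\ge 2$) is inherited from that corollary and that the identity can fail without it; for example, with tags $0.2,2,4,5$ one gets $m_V=11.8$ while $m_P+s_3=9.4+4=13.4$.
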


We now consider the sequence of natural numbers and calculate the minimum voucher cost $m_V(n)$.

\begin{example}
    For $n = 1$, we have $m_V(1) = 1$. For $n > 1$, we have
    \[m_V(n) = m_P(n) + n - 1 = \frac{n^3+8n}{6} - \frac{15 - 3(-1)^n}{12}.\]
The corresponding voucher sequences are
\[n-1,\ 2,\ n-3,\ 4,\ \dots,\ n-4,\ 3,\ n-2,\ 1,\ n\]
and
\[n-2,\ 3,\ n-4,\ 5,\ \dots,\ 4,\ n-3,\ 2,\ n-1,\ 1,\ n.\]
We get the following sequence, which is now sequence A393145 in the OEIS:
\[1,\ 3,\ 7,\ 15,\ 26,\ 43,\ 65,\ 95,\ 132,\ 179,\ 235,\ 303,\ 382.\]
\end{example}

\begin{corollary}
    We have
    \[m_V(n) = \textrm{A}026035(n) + n-1 = \textrm{A}110611(n-1)+n.\]
\end{corollary}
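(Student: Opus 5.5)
The statement is the last corollary, $m_V(n) = \textrm{A}026035(n) + n - 1 = \textrm{A}110611(n-1) + n$. My plan is to chain together identities already established in the paper rather than compute anything new.

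\textbf{First equality.} By Corollary~\ref{cor:mvThroughmp}, applied to the natural numbers with $s_i = i$ (so $s_1 = 1 \ge 1$ and Corollary~\ref{cor:minvouchermorethan1} applies), we get
\[
m_V(n) = m_P(n) + s_{n-1} = m_P(n) + n - 1 .
\]
Section~\ref{sec:prelim} records that $m_P(n)$ is sequence A026035 with the indexing starting at $2$. This gives $m_V(n) = \textrm{A}026035(n) + n - 1$.

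The one point to check here is the indexing convention. The listed values $2, 5, 12, \dots$ start at index $2$, and the formula $\binom{n}{3} + \lfloor n^2/2 \rfloor$ gives $2$ at $n = 2$. So A026035$(n)$ is exactly $m_P(n)$, with no shift.

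\textbf{Second equality.} I will use Example~\ref{ex:mpn-mLn-1+1}, namely $m_P(n) = m_L(n-1) + 1$, which is the natural-number case of the proposition $m_P^S(n) = m_L^{S'}(n-1) + s_1(s_n - s_{n-1})$ with $s_1 = 1$ and $s_n - s_{n-1} = 1$. Substituting,
\[
m_V(n) = m_L(n-1) + 1 + n - 1 = m_L(n-1) + n .
\]
The paper identifies $m_L$ with A110611, again listed from index $2$. Hence $m_V(n) = \textrm{A}110611(n-1) + n$.

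\textbf{Range of validity and the main obstacle.} The main obstacle is simply the range of $n$. For $n = 1$ the pairwise cost is undefined, and $m_L(n-1)$ needs $n - 1 \ge 2$. I would therefore state the identity for $n \ge 3$.

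I would confirm it numerically at small $n$. For $n = 3$: $m_P(3) + 2 = 5 + 2 = 7$, and $m_L(2) + 3 = 4 + 3 = 7$. For $n = 4$: $12 + 3 = 15$ and $11 + 4 = 15$. Both agree with the listed values of $m_V$.

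For $n = 2$, the first equality still holds, since $m_V(2) = m_P(2) + 1 = 2 + 1 = 3$. The second equality would require a value A110611$(1)$, which depends on the OEIS offset. I would note that case separately, or check it directly with the closed-form formula~\eqref{eq:loopmin}.
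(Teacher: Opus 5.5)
Your proposal is correct and is essentially the paper's own proof. The first equality comes from Corollary~\ref{cor:mvThroughmp} with $s_{n-1}=n-1$ and the identification $m_P(n)=\textrm{A}026035(n)$, and the second from Example~\ref{ex:mpn-mLn-1+1} together with $m_L(n-1)=\textrm{A}110611(n-1)$. Your attention to the range of $n$ goes beyond what the paper makes explicit: the closed form~\eqref{eq:loopmin} gives $m_L(1)=1$, so the second equality also holds at $n=2$ (since $1+2=3=m_V(2)$), while $n=1$ must be excluded.
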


\begin{proof}
The first part follows from Corollary~\ref{cor:mvThroughmp} and the fact that 
\[m_P(n) = \textrm{A}026035(n).\]

We have $A110611(n-1)=m_L(n-1)$ by its definition. From Example~\ref{ex:mpn-mLn-1+1}, we know that
\[\textrm{A}026035(n) = m_P(n) = m_L(n-1) + 1 = \textrm{A}110611(n-1) + 1,\]
which implies the second part.
\end{proof}

We can also describe the sequence minimizing the pairwise cost using a boundary construction.

\begin{corollary}
    We can use the alternating max-to-min and min-to-max boundary-respecting construction, with initial values $0$ on both sides, to construct a sequence with minimum pairwise cost.
\end{corollary}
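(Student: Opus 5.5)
The plan is to reduce the pairwise cost to a voucher-type cost with a modified left boundary, exactly as the paper did for the maximum. By Proposition~\ref{prop:add0}, $C_P(\vec v)=C_P((0,\vec v,0))$. This is the pairwise cost of a sequence whose two phantom boundary entries $v_0=0$ and $v_{n+1}=0$ are fixed and contribute nothing. It is therefore the voucher cost with the phantom coordinate $v_0=1$ replaced by $0$. Minimizing $C_P(\vec v)$ over permutations is the same as minimizing this padded pairwise cost with both ends fixed at $0$. The Flipping Lemma~\ref{lem:flipping} applies verbatim to the padded sequence, since a reversal of an interior block $v_{i+1},\dots,v_{j-1}$ with $0\le i<j-1\le n$ changes the cost by $(v_i-v_j)(v_{i+1}-v_{j-1})$ whether or not $v_i$ or $v_j$ is a phantom zero.

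The key steps repeat the proof of the preceding theorem, with the left boundary initialized to $0$ instead of $1$.

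\textbf{Step 1 (first placement).} Lemma~\ref{lem:pnisn}, in its pairwise version, already gives a minimizing arrangement with $v_n=s_n$. So placing the largest voucher at an end is valid. With both boundaries equal to $0$ there is a tie, and reflection equivalence handles the choice of side.

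\textbf{Step 2 (induction).} Assume a minimizing arrangement agrees with the placements so far, leaving one contiguous unfilled interval with boundary values $v_\ell,v_r$. If the next prescribed voucher is the smallest remaining one, then the boundary just created is larger than everything unplaced. If it is the largest remaining one, then that boundary is smaller than everything unplaced. In either case I would copy the two flip arguments of the previous proof word for word: choose $i,j$ so that the product $(v_i-v_j)(v_{i+1}-v_{j-1})$ is strictly positive whenever the voucher sits away from the prescribed end, which contradicts minimality. Ties $v_\ell=v_r$ are resolved by reversing the whole unfilled interval, which costs nothing by the Flipping Lemma.

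The only step needing care is where the phantom zeros enter the sign comparisons, namely the first two steps. After $s_n$ is placed, the boundaries are $0$ and $s_n$, and $s_1$ goes next to $s_n$. The strict inequality then requires comparing $0$ with unplaced values. Since the tags are distinct and non-negative, every unplaced value except possibly $s_1=0$ is strictly positive. When $s_1=0$, some flips have change exactly $0$ rather than strictly positive. I would handle this as in Theorem~\ref{thm:maxloop}: a zero change only produces an equally good arrangement, so the existence statement (``we can use the construction'') still holds.

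As a consistency check, I would note the alternative route. The construction with boundaries $(0,0)$ yields, after reflection, the order $n-1,\,2,\,n-3,\dots,3,\,n-2,\,1,\,n$. This agrees with Theorem~\ref{thm:minpairwise}, so one could also simply verify that the construction outputs this order and cite that theorem.
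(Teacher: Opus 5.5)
Your proposal is correct and follows essentially the same route as the paper: both rewrite $C_P(\vec v)$ via Proposition~\ref{prop:add0} as a voucher-type cost whose left phantom boundary is $0$ instead of $1$, and then rerun the inductive Flipping-Lemma argument for the alternating boundary-respecting construction, which you spell out in more detail than the paper does. One small correction: the zero-change flips you anticipate when $s_1=0$ can arise only at the first placement of $s_n$, which Lemma~\ref{lem:pnisn} already covers. The flip that places $s_1$ compares only against $s_n>0$, and afterwards every unplaced value is strictly positive, so all later flips are strict.
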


\begin{proof}
    We use the formula $C_P(\vec{v}) = C_P((0,\vec{v})) = C_V((0,\vec{v}))$ from Proposition~\ref{prop:add0}. The pairwise cost is equivalent to multiplying the first value by $0$ at the beginning of the voucher cost, rather than by $1$, so we initialize the left boundary to $0$ rather than $1$. From there, we can follow the boundary procedure.
\end{proof}

We can also describe the sequence minimizing the loop cost using our boundary construction.

\begin{corollary}
We can use the alternating max-to-min and min-to-max boundary-respecting construction, with initial values $s_1$ and $s_n$, and proceeding with the remaining values to construct a sequence with minimal loop cost.
\end{corollary}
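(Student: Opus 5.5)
We reduce the loop cost to a pairwise cost with fixed boundary values and then run the proof of the preceding theorem with different initial boundaries. By Lemma~\ref{lem:pn-1is1} and Lemma~\ref{lem:s2}, we may rotate and reflect a minimizing loop so that $s_1$ and $s_n$ are adjacent. Writing it as $(s_n,\vec{w},s_1)$, where $\vec{w}$ is a permutation of $s_2,\dots,s_{n-1}$, the loop cost is
\[C_L((s_n,\vec{w},s_1)) = s_1s_n + C_P((s_n,\vec{w},s_1)).\]
The first term is constant, so minimizing the loop cost amounts to minimizing the pairwise cost of $\vec{w}$ flanked by the fixed boundary values $s_n$ on the left and $s_1$ on the right. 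This is the same situation as in the voucher cost theorem, where $\vec{v}$ is flanked by the fixed values $v_0=1$ and $v_{n+1}=0$. The only change is that the initial boundaries are $s_1$ and $s_n$ instead of $1$ and $0$, and the voucher set is $s_2,\dots,s_{n-1}$. The Flipping Lemma~\ref{lem:flipping} applies to interior segments of $(s_n,\vec{w},s_1)$ exactly as before.

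Next, we run the induction from the proof of the preceding theorem. The hypothesis is that some minimum arrangement agrees with all placements made so far; the unfilled positions form one contiguous interval with boundary values $v_\ell$ and $v_r$. The alternation must be aligned with the boundaries. Initially one boundary, $s_n$, is larger than every unplaced voucher and the other, $s_1$, is smaller than every unplaced voucher. This is the same invariant the earlier proof relies on after its first step, when the placed maximum $s_n$ sits next to the boundary $1$ or $0$. So the first remaining voucher to place is the largest one, $s_{n-1}$, which goes next to the smaller boundary $s_1$. Then comes the smallest remaining, $s_2$, placed next to the larger boundary, which is again $s_n$ since $s_{n-1}<s_n$. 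After that the procedure alternates.

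At each step we repeat the two flip arguments from the earlier proof verbatim. If the prescribed voucher sits at an interior position, we flip the segment from that position to the appropriate end of the unfilled interval. We then check the sign of $(v_i-v_j)(v_{i+1}-v_{j-1})$ using three facts: the most recent boundary dominates, or is dominated by, every unplaced value; the other boundary is compared directly; and every unplaced value is compared against the fixed boundary. This shows the flip strictly decreases the cost, a contradiction. In the tie case $v_\ell=v_r$, reversing the whole unfilled interval leaves the cost unchanged, so either side is allowed.

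The main obstacle is verifying that the key invariant holds from the start with these new boundaries. The invariant is that the boundary created in the previous step is extremal relative to all unplaced vouchers. It holds initially only because $s_1$ and $s_n$ are the global minimum and maximum, so I would state it explicitly. I would also double-check the edge case where the offending position is adjacent to the opposite boundary, where the comparison is $v_\ell$ versus $v_r$. Finally, because the result is a loop, I would convert back through rotations and reflections. This also confirms consistency with Theorem~\ref{thm:loopmin}: placing $s_{n-1}$ next to $s_1$ and $s_2$ next to $s_n$ reproduces Corollary~\ref{cor:2smthn-11n}.
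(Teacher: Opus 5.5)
Your proof is correct, but it takes a different route from the paper's.

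\textbf{The paper's route.} The paper never uses the adjacency of $s_1$ and $s_n$. It writes $C_L(\vec v)=C_P((s_n,\vec w,s_n))$ by Proposition~\ref{prop:add0}, which duplicates $s_n$. It then applies the already-established pairwise version of the construction (boundaries $0,0$) to this multiset. The first three forced moves place $s_n$ on the right, $s_1$ next to it, and the second copy of $s_n$ on the left. That leaves exactly the boundaries $s_n$ and $s_1$, and since the output has $s_n$ at both ends, its pairwise cost is a loop cost.

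\textbf{Your route.} You instead use Lemma~\ref{lem:pn-1is1} to pin $s_n$ and $s_1$ as the fixed ends of a pairwise problem on $s_2,\dots,s_{n-1}$. You then re-run the flip induction directly with these boundaries.

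\textbf{Comparison.} The paper's argument is shorter because it cites an earlier result as a black box. However, it tacitly applies the pairwise construction to a multiset in which $s_n$ is repeated, outside the distinct-tags setting in which that construction was proved. It also needs a favorable tie-break when $s_1=0$. Your argument stays within distinct values, so every offending flip is a strict decrease. It also makes explicit the invariant that drives the proof: both initial boundaries are extremal.

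\textbf{Order of alternation.} The paper's reduction implicitly starts with a min step ($s_2$ next to $s_n$), whereas you start with a max step ($s_{n-1}$ next to $s_1$). One initial boundary exceeds every unplaced value and the other is below every unplaced value, so the invariant holds in either order. Hence both readings of the construction produce a minimizer.
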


\begin{proof}
We can represent our vector $\vec{v}$ as a concatenation $(s_n,\vec{w})$. From Proposition~\ref{prop:add0}, we have $C_P((s_n,\vec{w},s_n)) = C_L((s_n,\vec{w})) = C_L(\vec{v})$. If we follow the boundary procedure for the pairwise cost on vector $(s_n,\vec{w},s_n)$, we will put $s_n$ to the right, then $s_1$ to the right, and then $s_n$ to the left, and then continue. This is equivalent to initializing the boundaries as $s_1$ and $s_n$ and proceeding with the remaining values.

As a result, we will get the minimum pairwise cost for the vector $(s_n,\vec{w},s_n)$. Notice that we end up with two values of $s_n$ at both ends, implying that this cost is the same as the loop cost for $\vec{v}$, concluding the proof. Once again, rotations and reflections of the resulting sequence work as well.
\end{proof}

\begin{example}
    Suppose we have vouchers with price tags $0.5$, $1$, $4$, and $5$. We start by placing $0.5$ on the left and initializing both boundaries to $0.5$. Now, we place $5$ next to the smallest boundary, which could be either, since they are the same. If $5$ goes next to the left boundary, then we would place $1$ next to $5$. After that, we would place $4$ next to $1$. If $5$ goes next to the right boundary, then we would still place $1$ next to $5$, but $4$ would go next to $0.5$ (since $0.5<1$). This method yields two equivalent sequences $(0.5,5,1,4)$ and $(0.5,4,1,5)$, realizing the minimum. 
\end{example}

\section{Applications}
\label{sec:applications}

\subsection{More on the Voucher Cost for Natural Numbers}
\label{sec:diffandranges}

\subsubsection{Ranges}

Now we look at the difference $M_V(n) - m_V(n)$, which, for $n > 1$, equals
\[\frac{2n^3+3n^2-11n+18}{6} - \frac{n^3+8n}{6} + \frac{15-3(-1)^{n}}{12}.\]
After collecting the terms, we get
\[M_V(n) - m_V(n) = \frac{n^3+3n^2-19n}{6}+\frac{51-3(-1)^n}{12}.\]

Here are the first few terms of the difference sequence $M_V(n) - m_V(n)$ starting from index 1, which is new sequence A393532 in the OEIS \cite{OEIS}:
\[0,\ 1,\ 4,\ 10,\ 22,\ 39,\ 64,\ 96,\ 138,\ 189,\ 252,\ 326,\ 414.\]

Consider $n$ vouchers with price tags from 1 to $n$. Depending on the order, what are the possible values of the total voucher cost?

\begin{example}
    For $n=3$, we know that the minimum voucher cost of $7$ is achieved for the permutation $(2,1,3)$ and the maximum cost of $11$ is achieved for permutations $(2,3,1)$ and $(3,2,1)$. The other costs are $8$ for permutation $(3,1,2)$, $9$ for $(1,2,3)$, and $10$ for $(1,3,2)$.
\end{example}

Now, we ask a new question: Are there costs that are never achieved for $n$ price tags with values $1$ through $n$?

\begin{example}
    The maximum voucher cost for $n=1$ is $1$, and the smallest cost for $n=2$ is $3$. Thus, we can never have a cost of $2$. Similarly, the maximum cost for $n=2$ is $4$, and the smallest cost for $n=3$ is $7$, implying that $5$ and $6$ are never achieved. Similarly, $12$, $13$, and $14$ are never achieved.
\end{example}

We also know from Example~\ref{ex:cv4} that a cost of 22 is not achieved. Thus, our current list of non-achievable voucher costs is 
\[2,\ 5,\ 6,\ 12,\ 13,\ 14,\ 22.\]

We can provide examples of non-achieved costs because, for very small values of $n$, we have $M_V(n) + 1 < m_V(n+1)$. The pattern stops, as the following proposition shows.

\begin{proposition}
    We have
    \[M_V(n) > m_V(n+1)\]
    if and only if $n \ge 5$.
\end{proposition}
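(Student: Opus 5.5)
We prove the statement by substituting the closed forms for $M_V(n)$ and $m_V(n)$ already established and studying the sign of their difference. By the corollary giving $M_V(n) = \frac{2n^3+3n^2-11n+18}{6}$, valid for all $n\ge 1$, and by the example giving
\[
m_V(n+1) = \frac{(n+1)^3+8(n+1)}{6} - \frac{15-3(-1)^{n+1}}{12}, \qquad n+1>1,
\]
we form
\[
D(n) = M_V(n) - m_V(n+1).
\]
Expanding $(n+1)^3+8(n+1) = n^3+3n^2+11n+9$ gives
\[
D(n) = \frac{n^3-22n+9}{6} + \frac{15+3(-1)^{n}}{12} = \frac{2n^3-44n+33+3(-1)^n}{12}.
\]
I will recheck this arithmetic carefully, since it is the only real content of the argument.

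The small cases are handled by direct evaluation. Using the listed values of $M_V$ (which coincide with $M_L$: $1,4,11,25,48,\ldots$, with $M_V(1)=1$) and of $m_V$ ($1,3,7,15,26,43,\ldots$):
\begin{itemize}
\item $n=1$: $1<3$;
\item $n=2$: $4<7$;
\item $n=3$: $11<15$;
\item $n=4$: $25<26$;
\item $n=5$: $48>43$.
\end{itemize}
These values also serve as a check on the formula for $D(n)$. For example, $D(4)=(128-176+36)/12=-1$ and $D(5)=(250-220+30)/12=5$, both of which match.

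For $n\ge 5$, we show $D(n)>0$. It suffices to show $2n^3-44n+30>0$, since $3(-1)^n\ge -3$. Write
\[
2n^3-44n = 2n(n^2-22).
\]
For $n\ge 5$ we have $n^2-22\ge 3>0$, so $2n^3-44n+30 \ge 30>0$. Hence $D(n)>0$ for all $n\ge5$. Together with the explicit failures for $n\le 4$, this gives the "if and only if".

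The main obstacle is the bookkeeping with the parity term $(-1)^{n+1}=-(-1)^n$ when shifting the index in $m_V$. There is also a mild subtlety: the closed form for $m_V$ is stated only for $n>1$. This is harmless here, since we always evaluate $m_V$ at $n+1\ge 2$.
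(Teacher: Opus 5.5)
Your proof is correct and follows the paper's route: substitute the closed forms, note that the parity term is bounded so $2n(n^2-22)$ forces positivity once $n\ge 5$, and check $n\le 4$ directly. Your constant $\frac{33+3(-1)^n}{12}=\frac{11+(-1)^n}{4}$ is actually the right one (the paper's $\frac{11+(-1)^{n+1}}{4}$ has a harmless sign slip in the exponent), and your only inaccuracy is the remark that the closed form for $M_V(n)$ holds for all $n\ge 1$ --- it gives $2$ at $n=1$ rather than $M_V(1)=1$ --- which does not matter since you treat $n\le 4$ with the true values.
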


\begin{proof}
Expanding the formulas, we get that $M_V(n) - m_V(n+1)$ equals
\[\frac{2n^3+3n^2-11n+18}{6} - \frac{(n+1)^3+8(n+1)}{6} + \frac{15 - 3(-1)^{n+1}}{12}.\]
After simplifying, we get
\[M_V(n)-m_V(n+1) = \frac{n^3-22n}{6}+\frac{11 + (-1)^{n+1}}{4}.\]
The second fraction is non-negative. The resulting polynomial is positive whenever $n^2 > 22$, that is, when $n \ge 5$. We conclude the proof by manually checking for $1 \le n < 5$.
\end{proof}

For a given $n$, the range of voucher costs is in the order of $n^3$. On the other hand, we have $n!$ permutations. It is not surprising that our program did not find any more non-achievable costs.

\subsubsection{Average values}

We are also interested in average values. We denote them as $A_V(n)$, $A_P(n)$, and $A_L(n)$ for our cost functions $C_V$, $C_P$, and $C_L$, respectively.

We have the following values for $A_V(n)$:
\[1,\ 3\frac{1}{2},\ 9\frac{1}{3},\ 20,\ 37,\ 61\frac{5}{6},\ 96,\ 141,\ 198\frac{1}{3},\ 269\frac{1}{2},\ 356,\ 459\frac{1}{3},\ \ldots.\]

Similarly, we have the following values for $A_P(n)$:
\[0,\ 2,\ 7\frac{1}{3},\ 17\frac{1}{2},\ 34,\ 58\frac{1}{3},\ 92,\ 136\frac{1}{2},\ 193\frac{1}{3},\ 264,\ 350,\ 452\frac{5}{6}.\ \ldots.\]

Similarly, we have the following values for $A_L(n)$:
\[0,\ 4,\ 11,\ 23\frac{1}{3},\ 42\frac{1}{2},\ 70,\ 107\frac{1}{3},\ 156,\ 217\frac{1}{2},\ 293\frac{1}{3},\ 385,\ 494,\ \ldots.\]

We can get formulas for all the values.

\begin{proposition}
For $n > 1$, we have
\[
\begin{aligned}
A_L(n) &= \frac{n(n+1)(3n+2)}{12}\\
A_P(n) &= \frac{(n+1)(3n+2)(n-1)}{12}\\
A_V(n) &= \frac{(n+1)(3n^2-n+4)}{12}.
\end{aligned}
\]
\end{proposition}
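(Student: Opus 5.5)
We will compute the three averages by linearity of expectation over a uniformly random permutation $\pi=(p_1,\dots,p_n)$ of $\{1,\dots,n\}$. The basic ingredient is the expected product of two entries in distinct positions. For $a\ne b$ the pair $(p_a,p_b)$ is a uniformly random ordered pair of distinct elements, so
\[
E[p_ap_b]=\frac{1}{n(n-1)}\sum_{x\ne y}xy=\frac{1}{n(n-1)}\left(\Big(\sum_{x=1}^n x\Big)^2-\sum_{x=1}^n x^2\right).
\]
Substituting $\sum x=n(n+1)/2$ and $\sum x^2=n(n+1)(2n+1)/6$, the bracket becomes $\frac{n(n+1)}{12}\bigl(3n(n+1)-2(2n+1)\bigr)=\frac{n(n+1)(n-1)(3n+2)}{12}$. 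Hence
\[
E[p_ap_b]=\frac{(n+1)(3n+2)}{12}.
\]
This step is the only real computation and is where I expect arithmetic care to be needed.

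For the loop cost with $n>1$, there are exactly $n$ adjacent pairs $\{i,i+1\}$ (indices mod $n$), each joining distinct positions. Therefore
\[
A_L(n)=n\cdot\frac{(n+1)(3n+2)}{12}.
\]
For $n=2$ the two loop terms coincide as the pair $v_1v_2$ counted twice, which is consistent with the definition and gives $4$. For the pairwise cost there are $n-1$ adjacent pairs, so $A_P(n)=(n-1)(n+1)(3n+2)/12$.

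For the voucher cost, $C_V=p_1+C_P(\pi)$ with $E[p_1]=(n+1)/2$. Thus
\[
A_V(n)=\frac{(n+1)(3n+2)(n-1)}{12}+\frac{6(n+1)}{12}=\frac{(n+1)\bigl((3n+2)(n-1)+6\bigr)}{12},
\]
and $(3n+2)(n-1)+6=3n^2-n+4$, matching the claim.

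As a sanity check I would compare against the listed values. For example, $n=3$ gives $A_V=4\cdot 28/12=9\tfrac13$, $A_P=7\tfrac13$, and $A_L=11$, all in agreement. The restriction $n>1$ is needed because $E[p_ap_b]$ requires two distinct positions and $C_P,C_L$ are undefined for $n=1$.
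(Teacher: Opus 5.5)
Your proof is correct and is essentially the paper's argument. The paper double counts from the side of the values: each unordered pair $\{a,b\}$ is adjacent on the loop in $2n(n-2)!$ permutations, and this count is summed against $\frac{(\sum x)^2-\sum x^2}{2}$. You count from the side of the positions via $E[p_ap_b]=\frac{(n+1)(3n+2)}{12}$, which is the same double count read the other way. The only small difference is that the paper obtains $A_P(n)=\frac{n-1}{n}A_L(n)$ from a rotation argument, whereas your linearity of expectation gives $A_P$ directly from the $n-1$ adjacent position pairs; your voucher step using $E[p_1]=\frac{n+1}{2}$ matches the paper exactly.
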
 

\begin{proof}
Consider two distinct integers $a$ and $b$ such that $1\leq a,b\leq n$. The product $ab$ is a term in the sum for the loop cost for $2n(n-2)!$ of the orderings of integers from 1 to $n$ since we can have $n$ places for them to be next to each other, $2$ ways to arrange the order of $a$ and $b$, and $(n-2)!$ ways to arrange the rest of the numbers. The sum of all the possible $ab$ values is
\[\frac{(1+2+\dots+n)^2-(1^2+2^2+\dots+n^2)}{2} = \frac{n^2(n+1)^2}{8} - \frac{n(n+1)(2n+1)}{12}.\]
Simplifying we get
\[
\begin{aligned}
\frac{n^2(n+1)^2}{8} - \frac{n(n+1)(2n+1)}{12}
&= n(n+1)\left(\frac{3n(n+1)}{24} - \frac{2(2n+1)}{24}\right) \\
&= \frac{n(n+1)}{24}\left(3n(n+1) - 2(2n+1)\right) \\
&= \frac{n(n+1)}{24}\left(3n^2 - n - 2\right) \\
&= \frac{n(n+1)(3n+2)(n-1)}{24}.
\end{aligned}
\]
Thus, the sum of all possible products is
\[\frac{n(n+1)(3n+2)(n-1)}{24}\cdot 2n(n-2)! = \frac{n(n+1)(3n+2)n!}{12}.\]
When we divide this by the $n!$ possible ways to arrange the integers from 1 to $n$, we get the result for the average loop cost.

Now consider the pairwise cost. Consider $n$ permutations that are rotations of each other. They all have the same loop cost, with each product participating in all sums. Each such product participates in $n-1$ sums when calculating the pairwise cost. It follows that
\[A_P(n) = \frac{n-1}{n} A_L(n) = \frac{(n+1)(3n+2)(n-1)}{12}.\]

The formula for the voucher cost is similar to the pairwise cost formula, except we must add an additional term at the beginning. The average value of the term at the beginning is $\frac{n + 1}{2}$, which when added to $\frac{(n+1)(3n+2)(n-1)}{12}$ gives us $A_V(n)=\frac{(n+1)(3n^2-n+4)}{12}$.
\end{proof}

\begin{remark*}
    Coincidentally, the formula for vouchers works in case $n=1$.
\end{remark*}

Observe that the fractional part in all three sequences follows the same pattern. In particular, it has period 12. As all of these averages are polynomials divided by 12, then $A(n+12) - A(n)$ must be an integer.

\begin{corollary}
The fractional part of each of the sequences $A_L(n)$, $A_P(n)$, and $A_V(n)$ follows the same pattern of period 12:
\[0,\ \frac{1}{3},\ \frac{1}{2},\ 0,\ \frac{1}{3},\ 0,\ \frac{1}{2},\ \frac{1}{3},\ 0,\ 0,\ \frac{5}{6},\ 0.\]
For $A_P(n)$, the pattern starts at index $2$; for $A_L(n)$, the pattern starts at index $3$; and for $A_V(n)$, the pattern starts at index $8$.
\end{corollary}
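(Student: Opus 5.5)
The plan is to reduce everything to one explicit residue computation for $A_P$ and then transfer the pattern to $A_L$ and $A_V$ through integer-valued differences.

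\textbf{Periodicity.} For each of the three averages, the proposition above gives $A(n)=P(n)/12$ with $P$ a polynomial with integer coefficients. Since $n+12\equiv n \pmod{12}$, we have $P(n+12)\equiv P(n)\pmod{12}$, so $A(n+12)-A(n)$ is an integer. Hence each fractional part has period dividing $12$, and it suffices to determine $P(n)\bmod 12$ over one full period.

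\textbf{The pattern for $A_P$.} I will evaluate $(n+1)(3n+2)(n-1)\bmod 12$ for $n=2,\dots,13$, or read it off the listed values of $A_P(n)$, which are exact by the proposition. This gives the fractional parts
\[
0,\ \tfrac13,\ \tfrac12,\ 0,\ \tfrac13,\ 0,\ \tfrac12,\ \tfrac13,\ 0,\ 0,\ \tfrac56,\ 0
\]
starting at index $2$. Together with the periodicity, this settles $A_P$.

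\textbf{Transfer to $A_L$.} A direct computation from the formulas gives
\[
A_L(n)-A_P(n-1)=\frac{n\bigl[(n+1)(3n+2)-(3n-1)(n-2)\bigr]}{12}=\frac{n\cdot 12n}{12}=n^2,
\]
which is an integer. So the fractional part of $A_L(n)$ equals that of $A_P(n-1)$. The pattern for $A_L$ is therefore the same one, shifted to start at index $3$.

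\textbf{Transfer to $A_V$ (main obstacle).} From the proof of the proposition, $A_V(n)=A_P(n)+\frac{n+1}{2}$. The half-integer term means $A_V$ is not simply aligned with $A_P$ at the same index; the claimed start at index $8$ means the alignment is with $A_P(n-6)$, equivalently $A_P(n+6)$ by periodicity. So the key step is to show that
\[
A_P(n+6)-A_P(n)-\frac{n+1}{2}
\]
is an integer. I would expand
\[
\frac{(n+7)(3n+20)(n+5)-(n+1)(3n+2)(n-1)}{12}-\frac{n+1}{2}
\]
and check that the numerator, over the common denominator $12$, is divisible by $12$. If the symbolic simplification is awkward, a fallback is to note that this difference is a polynomial with numerator of degree at most $2$ over $12$, so integrality for all $n$ follows from checking it at $12$ consecutive values of $n$, which can be taken from the listed tables of $A_P$ and $A_V$.

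Once this is established, the fractional part of $A_V(n)$ equals that of $A_P(n-6)$. The pattern for $A_P$ begins at index $2$, so the pattern for $A_V$ begins at index $2+6=8$, which completes the statement.
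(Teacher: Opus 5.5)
Your proof is correct. It shares the periodicity step with the paper but identifies the pattern differently. The paper gives no separate argument. It notes that each average is an integer-coefficient polynomial divided by $12$, so $A(n+12)-A(n)\in\mathbb{Z}$, and then reads the pattern off the tabulated values of each sequence separately.

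You instead compute one period of $A_P$ and transfer it through integer-valued differences, $A_L(n)-A_P(n-1)=n^2$ and $A_V(n)-A_P(n-6)\in\mathbb{Z}$. The step you flagged as the main obstacle closes in one line:
\[
12\bigl[A_P(n+6)-A_P(n)\bigr]-6(n+1)=54n^2+342n+696\equiv 6n(n+1)\equiv 0 \pmod{12}.
\]

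Your route explains why the three sequences share the same pattern and why the offsets are exactly $1$ and $6$, and it needs only one residue table. It also avoids depending on the listed data. Those lists stop at index $12$, so checking $A_L$ over a full period from index $3$ means either computing $A_L(13)$ from the formula or wrapping around to $A_L(1)$. The paper lists $A_L(1)$ as $0$ by convention, not the polynomial value $\frac{5}{6}$, so naive wrapping would give the wrong answer.

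The paper's approach is more immediate. However, it is an independent finite check for each sequence and gives no structural reason why the patterns coincide.
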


\subsection{Arithmetic progressions}
\label{sec:arithmprog}

In our original problem, the vouchers were from the set $\{1, \ldots, n\}$. What if the voucher price tags are integers forming an arbitrary arithmetic progression? Suppose we replace voucher $k$ with $a+kd$.

From the Transcalation Lemma~\ref{lem:scaling}, we get the following formula
\[C_L(\vec{v}) = d^2C_L(\pi) + adn(n+1) + a^2n.\]

We denote the maximum and minimum as $M_L(n,a,d)$ and $m_L(n,a,d)$, respectively. We give similar notation to pairwise and voucher costs.

Using the order-isomorphism, and plugging in $M_L(n)$ in place of $C_L(\pi)$ for maximum and $m_L(n)$ in place of $C_L(\pi)$ for minimum, we get the following formulas
\[M_L(n,a,d)=d^2\left(\frac{2n^3+3n^2-11n+18}{6}\right)+adn(n+1)+a^2n,\]
\[m_L(n,a,d)=d^2\left(\frac{n^3+3n^2+5n}{6} - \frac{9 + 3(-1)^n}{12}\right)+adn(n+1)+a^2n.\]

Throughout this section, we consider the sequence of odd numbers as our main example: $v_i = 2i-1$, corresponding to $a = -1$ and $d = 2$.

Recall that for the formulas above, we assume that $n > 1$.

\begin{example}
    For odd numbers, we get
    \[M_L(n,-1,2) = 4M_L(n) -2n(n+1) + n = \frac{4n^3-25n+36}{3},\]
    matching sequence A273314 in the OEIS, up to a shift in index. The first few terms starting from index $2$ are
    \[6,\ 23,\ 64,\ 137,\ 250,\ 411,\ 628,\ 909,\ 1262.\]
    Similarly, for the minimum loop cost
    \[m_L(n,-1,2) = 4m_L(n) - 2n(n+1) + n= \frac{2n^3+7n-9-3(-1)^n}{3}.\]
    The corresponding sequence starts from index $2$ as 
\[6,\ 23,\ 48,\ 93,\ 154,\ 243,\ 356,\ 505,\ 686,\]
and is now sequence A395196 in the OEIS database \cite{OEIS}.
\end{example}

For the pairwise cost, we have
\[C_P(\vec{v}) = d^2C_L(\pi) + adn(n+1) + a^2n - v_1v_n.\]

The maximum pairwise cost is achieved in the same order as the maximum loop cost. We have $v_1 = s_2 = a+2d$ and $v_n = s_1 = a+d$; plugging this in, we get for $n > 1$
\[M_P(n,a,d) = M_L(n,a,d) - (a+2d)(a+d).\]

For the minimum pairwise cost, the calculation is slightly different. For order $\vec{v}$ that realizes the minimum pairwise cost, we have $v_1 = a + d(n-1)$ and $v_n = a +dn$. It follows that 
\[C_L(\pi) = C_P(\pi) + n(n-1).\]
Thus, 
\[C_P(\vec{v}) = d^2(C_P(\pi) + n(n-1)) + adn(n+1) + a^2n - (a + d(n-1))(a+dn).\]
Simplifying, we get for $n > 1$
\[m_P(n,a,d) = d^{2}m_{P}(n)+ a^{2}(n-1) + a d(n^{2}-n+1).\]

\begin{example}
    For odd numbers and $n > 1$, we get
    \[M_P(n,-1,2) = M_L(n,-1,2) - 3
 = \frac{4n^3-25n+27}{3},\]
 which equals sequence A273314 in the OEIS with $3$ subtracted. The first few maximum pairwise costs starting from index $2$ are
 \[3,\ 20,\ 61,\ 134,\ 247,\ 408,\ 625,\ 906,\ 1259.\]
    Similarly, for $n > 1$
    \[
\begin{aligned}
m_P(n,-1,2)
&= 4m_P(n) + n - 1 - 2\bigl(n^2 - n + 1\bigr) \\
&= 4m_P(n) - 2n^2 + 3n - 3
 = \frac{2n^3 - 6n^2 + 13n - 12 + 3(-1)^n}{3}.
\end{aligned}
\]
The corresponding sequence, the minimum pairwise cost for odd vouchers $m_P(n,-1,2)$, starts from index $2$ as 
\[3,\ 8,\ 25,\ 50,\ 95,\ 156,\ 245,\ 358,\ 507,\]
and is now sequence A394241 in the OEIS database \cite{OEIS}.
\end{example}

We now move to the voucher cost, which is
\[C_V(\vec{v}) = v_1 + d^2C_L(\pi) + adn(n+1) + na^2 - v_1v_n.\]

From Theorem~\ref{thm:maxorderiso}, we know that the maximal voucher cost is achieved when the permutation is order-isomorphic to 
\[2,\ 4,\ 6,\ 8,\ \dots,\ n,\ \dots,\ 7,\ 5,\ 3,\ 1.\]
Thus, $v_1=a + 2d$ and $v_n=a + d$. As above, the maximal voucher cost is
\[M_V(n,a,d) = M_L(n,a,d) - (a+2d)(a+d) + a + 2d.\]

Plugging in the value of $M_L(n)$, we get
\[M_V(n,a,d) = a+2d+(n-1)a^2+(n(n+1)-3)ad+\left(\frac{2n^3+3n^2-11n+6}{6}\right)d^2.\]

\begin{example} The maximum voucher cost for odd numbers is $\frac{4n^3-25n+36}{3}$ for $n > 1$, and 1 for $n=1$. This is the same as the loop cost. This appears to be sequence A273314 in the OEIS, shifted by 1 in index. The actual sequence for the voucher cost starts as
\[1,\ 6,\ 23,\ 64,\ 137,\ 250,\ 411,\ 628,\ 909,\ 1262,\ 1695.\]
\end{example}

We now move to the minimum. From Corollary~\ref{cor:minvouchermorethan1}, we know that the minimum voucher cost is achieved when the permutation is order-isomorphic to 
\[n-1,\ 2,\ n-3,\ 4,\ \ldots,\ 3,\ n-2,\ 1,\ n.\]
Thus, $v_1=a + (n-1)d$ and $v_n=a + nd$, so the minimum voucher cost is 
\[d^2(m_L(n)-n(n-1)) + ad(n(n+1)-2n+1) + a^2(n-1) + a + d(n-1).\]

\begin{example}
The minimum voucher cost for odd numbers with $n$ vouchers is $\frac{2n^3-6n^2+19n-21+3(-1)^n}{3}$ for $n > 1$, and 1 for $n=1$. The sequence is now sequence A392324 in the OEIS \cite{OEIS} and starts as: 1, 4, 11, 30, 57, 104, 167, 258, 373, 524, 707.
\end{example}

\subsection{Geometric Progression}
\label{sec:geometricprog}

We consider the set of vouchers that are powers of $x$. We assume that $x > 1$. 

For our formulas, we start with the voucher $x^0=1$. For our functions, we add $x$ as a parameter, that is, our functions are $M_L^G(n,x)$, $M_P^G(n,x)$, $M_V^G(n,x)$, $m_L^G(n,x)$, $m_P^G(n,x)$, and $m_V^G(n,x)$.

We assume that $n$ is $2$ or greater. By direct computation, we get the following formulas.

\begin{theorem}
    For $n \ge 2$, we get the following formulas for maximum:
    \[M_V^G(n,x) = M_L^G(n,x) = M_P^G(n,x) + x = \frac{x^2-x^{2n-2}}{1-x^2} + x^{2n-3} + x,\]
    and the following for minimum
    \[m_V^G(n,x) = m_P^G(n,x) + x^{n-2},\]
    where
    \[m_V^G(n,x) = \frac{nx^{n-3}(x^2+1)}{2} - \left(\frac{1-(-1)^n}{2}\right)\frac{x^{n-3}(x-1)^2}{2} + x^{n-2} - x^{n-3}.\]
    We also have
    \[m_L^G(n,x) = \frac{nx^{n-2}(x^2+1)}{2} - \left(\frac{3+(-1)^n}{4}\right)x^{n-2}(x-1)^2.\]
\end{theorem}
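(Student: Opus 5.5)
The plan is to take the extremal orderings already established for arbitrary distinct non-negative price tags and substitute $s_k = x^{k-1}$ for $k=1,\dots,n$. Each cost then becomes a sum of monomials $x^{i+j-2}$, one for each adjacent pair $(s_i,s_j)$. So the whole computation reduces to listing which index pairs are adjacent in the extremal arrangement and summing the resulting finite geometric series. Since $x>1$, these tags are distinct and positive with $s_1 = 1$, so every earlier theorem applies.

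\textbf{Maximum.} By Theorem~\ref{thm:maxloop}, the maximal loop is order-isomorphic to $2,4,\dots,n,\dots,5,3,1$. Its adjacent pairs are:
\begin{itemize}
\item $(s_k,s_{k+2})$ for $k=1,\dots,n-2$;
\item $(s_1,s_2)$ and $(s_{n-1},s_n)$.
\end{itemize}
The first family contributes $\sum_{k=1}^{n-2} x^{2k} = \frac{x^2-x^{2n-2}}{1-x^2}$. The two extra pairs contribute $x$ and $x^{2n-3}$, which gives $M_L^G$. By the corollary $M_L^S(n)=M_P^S(n)+s_1s_2$ we get $M_P^G = M_L^G - x$. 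Since $s_1=1$, Corollary~\ref{cor:maxorderisoMoreThan1} and the corollary $M_V^S(n) = M_P^S(n)+s_2$ give $M_V^G = M_P^G + x = M_L^G$.

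\textbf{Minimum.} For the loop I will use the form $n,2,n-2,4,\dots,n-3,3,n-1,1$ from Theorem~\ref{thm:loopmin}. In this form the index sums of adjacent pairs alternate between $n+2$ and $n$. The exceptions are the wrap-around pair $(1,n)$, with sum $n+1$, and the middle junction, whose sum depends on the parity of $n$. Each pair with index sum $\sigma$ contributes $x^{\sigma-2}$. So I will count, separately for $n$ even and $n$ odd, how many pairs have sums $n$, $n+1$ and $n+2$. For example, I expect about $n/2$ each of $x^{n-2}$ and $x^{n}$, plus a bounded number of $x^{n-1}$ terms. I will then check that the totals collapse to $\frac{n x^{n-2}(x^2+1)}{2}$ minus the stated correction $\frac{3+(-1)^n}{4}x^{n-2}(x-1)^2$.

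For the pairwise cost I will apply the proposition $m_P^S(n) = m_L^{S'}(n-1) + s_1(s_n - s_{n-1})$. This gives $m_P^G(n,x) = m_L^G(n-1,x) + x^{n-1}-x^{n-2}$. Finally, Corollary~\ref{cor:mvThroughmp}, which applies because $s_1 = 1\ge 1$, gives $m_V^G = m_P^G + s_{n-1} = m_P^G + x^{n-2}$. Substituting the loop formula with $n-1$ in place of $n$ should produce the stated expression for $m_V^G$, after rewriting $(-1)^{n-1}$ and collecting terms. The correction coefficient $\frac{3-(-1)^n}{4}$ coming from the loop formula should combine with the $x^{n-1}-x^{n-2}$ term to give the $\frac{1-(-1)^n}{2}$ form.

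\textbf{Main obstacle.} The only delicate step is the parity bookkeeping for the minimal loop. I must identify correctly the middle junction of the pattern $n,2,n-2,4,\dots,3,n-1,1$ and the multiplicities of each exponent, since an off-by-one error there changes the $(x-1)^2$ correction. To guard against this, I will verify the result directly for $n=4$ and $n=5$, using the orderings $(2,3,1,4)$ and $(2,3,4,1,5)$ of Example~\ref{ex:min4-5}, before doing the general count.
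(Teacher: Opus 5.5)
Your proposal is correct and matches the paper's argument, which is only ``direct computation'': substitute $s_k=x^{k-1}$ into the extremal orderings from Theorems~\ref{thm:maxloop} and~\ref{thm:loopmin} and the related corollaries. The parity count comes out as $\frac{n-2}{2}(x^n+x^{n-2})+2x^{n-1}$ for even $n$, where the middle junction has index sum $n+1$, and as $\frac{n-1}{2}(x^n+x^{n-2})+x^{n-1}$ for odd $n$, where there is no exceptional junction at all. One loose end: your step $m_P^G(n,x)=m_L^G(n-1,x)+x^{n-1}-x^{n-2}$ needs $n\ge 3$, so check $n=2$ directly ($m_P^G=x$, $m_V^G=x+1$).
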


\begin{example}
Suppose $x = 2$. Then the maximum voucher and loop costs are the following sequence:
\[1,\ 4,\ 14,\ 54,\ 214,\ 854,\ 3414,\ 13654,\ 54614,\ 218454,\ 873814.\]
For $n> 1$, the formula is
\[M_V^G(n,2) = \frac{5\cdot 4^{n-1} + 4}{6}.\]
For $n > 1$, this sequence is A080675$(n-1) + 2$, while the pairwise cost is sequence A080675 in the database \cite{OEIS}.

The minimum voucher cost is the following sequence, which is now sequence A395767 in the OEIS \cite{OEIS}:
\[1,\ 3,\ 8,\ 22,\ 52,\ 128,\ 288,\ 672,\ 1472,\ 3328,\ 7168,\ 15872,\ \ldots.\]

The formula for $n > 1$ is 
\[m_V^G(n,2) = 2^{n-5}(10n + 3 + (-1)^n).\]

The minimum pairwise cost is
\[m_P^G(n,2) = 2^{n-5}(10n - 5 + (-1)^n) = m_V(n,2) - 2^{n-2},\]
and the first few terms starting from index $2$ are
\[2,\ 6,\ 18,\ 44,\ 112,\ 256,\ 608,\ 1344,\ 3072,\ 6656,\ 14848,\ 31744,\ 69632,\]
which is now sequence A395770 in the OEIS \cite{OEIS}.

The minimum loop cost for $n > 1$ is
\[2^{n-4}(10n - 3 - (-1)^n),\]
and the first few terms starting from index $2$ are now sequence A394572 in the OEIS \cite{OEIS}:
\[4,\ 14,\ 36,\ 96,\ 224,\ 544,\ 1216,\ 2816,\ 6144,\ 13824,\ 29696,\ 65536,\ 139264.\]

\end{example}

\subsection{Negative Numbers}
\label{sec:neg}

Suppose the price tags have negative values. We can look at these values as $-\vec{v}$, with $\vec{v}$ having positive coordinates. Then
\[C_L(-\vec{v}) = C_L(\vec{v}) \quad \textrm{ and } \quad C_P(-\vec{v}) = C_P(\vec{v}).\]
This connection allows us to find the maximum and minimum loop and pairwise costs for a set of vouchers with negative price tags.

Now we consider the voucher cost
\[C_V(-\vec{v}) = - v_1 + v_1v_2 + v_2v_3 + \dots + v_{n-1}v_n.\]
It is very similar to the voucher cost, except the cost of the first voucher changes. We will discuss what happens here in a more general setting, introducing a parameter $c$. Consider a new voucher cost depending on $c$
\[C_V^c(-\vec{v}) = cv_1 + v_1v_2 + v_2v_3 + \dots + v_{n-1}v_n.\]

\begin{proposition}
    The min-to-min boundary-respecting construction, initialized with left boundary $c$ and right boundary $0$, produces an ordering that maximizes the generalized voucher cost
    \[
    C_V^c(-\vec{v}) = cv_1 + v_1v_2 + v_2v_3 + \cdots + v_{n-1}v_n
    \]
    for any real number $c$ and any set of non-negative price tags.
\end{proposition}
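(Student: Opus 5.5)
We will mirror the proof of Theorem~\ref{thm:maxorderiso}, replacing the phantom coordinate $v_0=1$ by $v_0=c$. Write
\[
C_V^c(\vec{v}) = \sum_{i=1}^{n} v_{i-1}v_i + v_nv_{n+1}, \qquad v_0=c,\ v_{n+1}=0,
\]
so that $C_V^c(\vec{v})=C_P((c,\vec{v},0))$ with the two end coordinates frozen. The Flipping Lemma~\ref{lem:flipping} is stated for arbitrary real entries, not necessarily distinct or non-negative. Its proof only compares the two products at the ends of the reversed block. It therefore applies verbatim to the extended vector $(c,v_1,\dots,v_n,0)$, provided the reversed block lies strictly inside the indices $1,\dots,n$. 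This holds whenever $0\le i<j-1\le n$, so the frozen entries $c$ and $0$ are never moved.

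\textbf{Induction setup.} We argue by induction on the number of placed vouchers, placing them in increasing order $s_1<s_2<\cdots<s_n$. The inductive claim is that some maximizing arrangement agrees with the construction so far. The placed vouchers then occupy a prefix and a suffix of positions $1,\dots,n$, leaving one contiguous unfilled interval. Its current boundary values are $v_\ell$ on the left and $v_r$ on the right; initially $v_\ell=c$ and $v_r=0$. The key observation is that every boundary value is either one of $c,0$ or an already placed voucher. Since vouchers are placed in increasing order, every unplaced voucher is at least as large as the new voucher $s_k$.

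\textbf{Inductive step.} Suppose $v_\ell<v_r$. Assume that in a compatible maximizing arrangement, $s_k$ sits at a position $t>\ell+1$, while position $\ell+1$ holds some $v_m>s_k$. We flip the block from $\ell+1$ to $t$, i.e.\ we take $i=\ell$ and $j=t+1$. If $t=r-1$, then $v_j=v_r>v_\ell$. Otherwise $v_j$ is unplaced, so $v_j>s_k$.

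The main obstacle is this step, which needs $v_j>v_\ell$ but only gives $v_j>s_k$. We must show $s_k\ge v_\ell$ in the unplaced case. If $v_\ell$ is a placed voucher, this is immediate, since placed vouchers are smaller than $s_k$.

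If $v_\ell=c$, the comparison can fail when $c>s_k$. In that subcase we use $v_r<v_\ell$ information instead. Since $v_r<v_\ell=c$, we run the symmetric argument toward the right boundary. If $v_r$ is also a frozen value or a placed voucher, we get $v_r\le$ every unplaced value. This covers both $v_r=0$ (prices are non-negative) and $v_r$ a placed voucher. So we can always work at whichever boundary is smaller, which is exactly what the construction prescribes.

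Concretely, we apply the argument at the smaller boundary $b=\min(v_\ell,v_r)$. We must check $b\le$ every unplaced value. Both boundaries can be frozen only at the start, where $b=\min(c,0)\le 0\le s_k$. If one boundary is a placed voucher $p<s_k$, then $b\le p<s_k$. Either way $v_j>b=v_i$ and $v_{i+1}=v_m>s_k=v_{j-1}$. The product in the Flipping Lemma is then negative, so the flip strictly increases the cost. This contradicts maximality unless $s_k$ is already adjacent to the smaller boundary.

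\textbf{Ties and conclusion.} When $v_\ell=v_r$, the same argument rules out interior positions for $s_k$. Reversing the entire unfilled interval changes the cost by $(v_\ell-v_r)(\cdot)=0$, so either end is admissible, matching the construction's freedom. Completing the induction shows the construction yields a maximizer for every real $c$. Negative $c$ only affects the first step, where the bound $b\le 0\le s_k$ still holds.
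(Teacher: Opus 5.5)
Your proof is correct and follows the same route as the paper's: induction via the Flipping Lemma on the augmented sequence $(c,v_1,\ldots,v_n,0)$, moving the next smallest voucher next to the smaller boundary, and reversing the unfilled interval when the boundaries tie. Your explicit check that the smaller boundary never exceeds an unplaced voucher (needed because $c$ may exceed $s_k$) justifies a step the paper only asserts. However, your strict inequalities and contradiction argument assume distinct tags, whereas the paper argues that ``the flip does not decrease the cost'' so that repeated tags are also covered.
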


\begin{proof}
Let the non-negative price tags be $s_1\le s_2\le\cdots\le s_n$. We regard the cost
$C_V^c(-\vec{v})$ as the pairwise cost of the augmented sequence $(c,v_1,\ldots,v_n,0)$. Thus, the initial boundary values are $c$ on the left and $0$ on the right.

We prove by induction that after each step of the min-to-min boundary-respecting construction, there is a maximum-cost arrangement agreeing with the construction so far.

Suppose that the vouchers $s_1,\ldots, s_{k-1}$ have already been placed according to the construction. The placed vouchers occupy the positions outside one contiguous unfilled interval. Let the left and right boundary values of this interval be $v_\ell$ and $v_r$, respectively. We need to show that the next voucher, $s_k$, may be placed next to a smallest boundary.

Assume first that $v_\ell<v_r$. The construction places $s_k$ at position $\ell+1$. Suppose that in a maximum-cost arrangement compatible with the previous choices, $s_k$ is instead at position $j-1$, where $j-1>\ell+1$. Then $v_{\ell+1}\ge s_k$, because $s_k$ is one of the smallest unplaced vouchers.

We apply the Flipping Lemma~\ref{lem:flipping} to the subsequence from $v_{\ell+1}$ through $v_{j-1}$. The difference between the old cost and the new cost is
\[
(v_\ell-v_j)(v_{\ell+1}-v_{j-1}).
\]
Here $v_{\ell+1}-v_{j-1}=v_{\ell+1}-s_k\ge 0$. Also, $v_\ell-v_j<0$: if $j=r$, this follows from $v_\ell<v_r$, and otherwise $v_j$ is still unplaced and is at least $s_k\ge v_\ell$; if equality occurs, the product is $0$, and the flip does not change the cost. Therefore,
\[
(v_\ell-v_j)(v_{\ell+1}-v_{j-1})\le 0.
\]
By the Flipping Lemma~\ref{lem:flipping}, the flip does not decrease the cost. Thus, there is a maximum-cost arrangement in which $s_k$ is placed next to the left boundary.

The case $v_r<v_\ell$ is symmetric.

If $v_\ell=v_r$, the same argument shows that $s_k$ may be assumed to be at one of the two ends of the unfilled interval. Since the two boundary values are equal, reversing the unfilled interval does not change the cost. Hence, either end is compatible with a maximum-cost arrangement.

Thus, at each step, the next smallest remaining voucher may be placed next to a smallest boundary. By induction, the min-to-min boundary-respecting construction produces an ordering realizing the maximum value of $C_V^c(-\vec{v})$.
\end{proof}

\begin{remark*}
    We usually use distinct price tags, but in the theorems above and below, we allow the price tags to repeat, because one of them could equal $c$.
\end{remark*}

\begin{proposition}
    The alternating max-to-min and min-to-max boundary-respecting construction, initialized with left boundary $c$ and right boundary $0$, produces an ordering that minimizes the generalized voucher cost
    \[
    C_V^c(-\vec{v}) = cv_1 + v_1v_2 + v_2v_3 + \cdots + v_{n-1}v_n
    \]
    for any real number $c$ and any set of distinct non-negative price tags.
\end{proposition}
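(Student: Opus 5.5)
I will follow the proof of the minimal voucher cost theorem almost verbatim, replacing the phantom left boundary $v_0=1$ by $v_0=c$. Concretely, I regard $C_V^c(-\vec v)$ as the pairwise cost of the augmented sequence $(c,v_1,\ldots,v_n,0)$. The Flipping Lemma~\ref{lem:flipping} holds for arbitrary real entries, so it applies to reversals of any block $v_{i+1},\ldots,v_{j-1}$ with $0\le i<j-1\le n$, where $v_0=c$ and $v_{n+1}=0$ are fixed. The plan is an induction showing that after each step of the alternating construction there is a minimum-cost arrangement agreeing with all placements made so far.

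\textbf{First step.} The construction places $s_n$ next to whichever boundary is smaller. Initially the boundaries are $c$ and $0$, so the sign of $c$ matters.
\begin{itemize}
\item If $c\ge 0$, I reuse the argument of Lemma~\ref{lem:pnisn}. Let $v_i$ be the term before $s_n$ (possibly $v_0=c\ge 0$), and take $j=n+1$ with $v_j=0$. Then $(v_i-0)(s_n-v_n)\ge 0$, so moving $s_n$ to the right end does not increase the cost.
\item If $c<0$, the construction puts $s_n$ at position $1$. Suppose $s_n$ sits at position $t$, and apply the lemma with $i=0$, $j=t+1$. We get $(c-v_{t+1})(v_1-s_n)\ge 0$, because $c<0\le v_{t+1}$ and $v_1\le s_n$. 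Hence bringing $s_n$ to the front does not increase the cost.
\end{itemize}
In both cases we may assume $s_n$ is placed as prescribed. If $c=0$ the two boundaries tie, and reversing the whole interval costs nothing since $(c-0)(\cdot)=0$.

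\textbf{Inductive step.} Here I copy the two cases from the earlier theorem, with boundaries $v_\ell,v_r$ of the unfilled interval.
\begin{itemize}
\item \emph{Placing the smallest remaining $s_k$ next to the larger boundary.} The previous step placed the then-largest voucher, so that boundary exceeds every unplaced voucher. Say $v_\ell<v_r$ and $s_k$ is at some position $t<r-1$. Flipping with $i=t-1$ and $j=r$ gives $v_{i+1}-v_{j-1}=s_k-v_{r-1}<0$ by distinctness. Also $v_i-v_j<0$: either $i=\ell$, or $v_i$ is unplaced and hence smaller than the just-placed large boundary. So the flip strictly decreases the cost, a contradiction.
\item \emph{Placing the largest remaining $s_k$ next to the smaller boundary.} This is the symmetric case: flip with $i=\ell$, $j=t+1$, using that the just-placed small boundary is below every unplaced voucher.
\item \emph{Ties $v_\ell=v_r$.} These are handled by reversing the whole unfilled interval, which does not change the cost.
\end{itemize}

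\textbf{Main obstacle.} The difficulty lies in the first few steps, when one boundary is still the artificial value $c$ (or $0$) rather than a just-placed extreme. The property "the newly created boundary is larger (resp.\ smaller) than every unplaced voucher" holds only for the boundary created in the previous step. The other boundary may be $c$, which can be negative, or larger than some price tags, or equal to one of them; this is why repeated values were allowed in the remark. For each case I must check that the needed sign of $v_i-v_j$ still holds. Whenever the inequality is only weak, I only need that the flip does not increase the cost, since the claim is existence of an agreeing minimizer. I expect weak inequalities to suffice throughout. I would verify this carefully for the configuration where $c$ lies strictly between unplaced values and is the larger boundary. In that configuration the comparison is between $c$ and the just-placed small boundary, which is below all unplaced values, and the required sign follows.
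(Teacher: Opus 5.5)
Your proposal is correct and is essentially the paper's proof: it uses the same augmented sequence $(c,v_1,\ldots,v_n,0)$, the same induction on placement steps, the same choices of $i,j$ in the Flipping Lemma for the ``smallest'' and ``largest'' steps, and the same reversal of the unfilled interval for ties. The differences are cosmetic. You split the first step by the sign of $c$, whereas the paper's general ``largest remaining'' case already covers it, since $\min(c,0)\le 0$ lies below every tag. The paper also states the key facts as ``the smaller (larger) current boundary is $\le$ ($\ge$) every unplaced voucher,'' which disposes in one line of the $c$-configurations you flag as the main obstacle.
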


\begin{proof}
Let the price tags be $s_1 \le s_2 \le \cdots \le s_n$. We regard the cost
$C_V^c(-\vec{v})$ as the pairwise cost of the augmented sequence
\[
(c,v_1,\ldots,v_n,0).
\]
Thus, the initial boundary values are $c$ on the left and $0$ on the right.

We prove by induction that after each step of the construction, there is a
minimum-cost arrangement agreeing with the construction so far. The placed
vouchers occupy the positions outside one contiguous unfilled interval. Let
$v_\ell$ and $v_r$ be the left and right boundary values of this interval.

Suppose first that the next voucher prescribed by the construction is the
largest remaining voucher, say $s_k$. This means that the smaller current
boundary is less than or equal to every voucher still unplaced. If
$v_\ell<v_r$, the construction places $s_k$ at position $\ell+1$. Suppose
instead that, in a compatible minimum arrangement, $s_k$ is at position
$t>\ell+1$. Apply the Flipping Lemma~\ref{lem:flipping} with $i=\ell$ and $j=t+1$. Then
\[
v_{i+1}-v_{j-1}=v_{\ell+1}-s_k\le 0.
\]
Also $v_i-v_j\le 0$: if $t=r-1$, this is $v_\ell-v_r<0$, and otherwise
$v_j$ is still unplaced and hence is at least $v_\ell$. Therefore,
\[
(v_i-v_j)(v_{i+1}-v_{j-1})\ge 0.
\]
By the Flipping Lemma~\ref{lem:flipping}, the flip does not increase the cost. Hence there is a minimum-cost arrangement in which $s_k$ is placed at position $\ell+1$. The case $v_r<v_\ell$ is symmetric.

Now suppose that the next voucher prescribed by the construction is the smallest remaining voucher, say $s_k$. This means that the larger current boundary is greater than or equal to every voucher still unplaced, as we already placed the largest voucher next to a boundary. If $v_\ell<v_r$, the construction places $s_k$ at position $r-1$. Suppose instead that, in a compatible minimum arrangement, $s_k$ is at position $t<r-1$. Apply the Flipping Lemma~\ref{lem:flipping} with $i=t-1$ and $j=r$. Then
\[
v_{i+1}-v_{j-1}=s_k-v_{r-1}\le 0.
\]
Also $v_i-v_j\le 0$: if $t=\ell+1$, this is $v_\ell-v_r<0$, and otherwise
$v_i$ is still unplaced and hence is at most $v_r$. Therefore,
\[
(v_i-v_j)(v_{i+1}-v_{j-1})\ge 0.
\]
By the Flipping Lemma~\ref{lem:flipping}, the flip does not increase the cost. Hence there is a minimum-cost arrangement in which $s_k$ is placed at position $r-1$. The case $v_r<v_\ell$ is symmetric.

If $v_\ell=v_r$, the same argument shows that the prescribed voucher may be assumed to be at one of the two ends of the unfilled interval. Since the boundary values are equal, reversing the entire unfilled interval does not change the cost.

Thus, after every step, there exists a minimum-cost arrangement compatible with the construction. By induction, the alternating max-to-min and min-to-max boundary-respecting construction produces an ordering realizing the minimum value of $C_V^c(-\vec{v})$.
\end{proof}

\begin{example}
    Suppose the vector of price tags is $(-1,-2,-3,-4,-5)$. The order $(-1,-3,-5,-4,-2)$ realizes the maximum voucher cost of $45$. The order $(-5,-1,-3,-2,-4)$ realizes the minimum voucher cost of $17$.
\end{example}

\section{Acknowledgments}

We are grateful to the PRIMES STEP program for giving us the opportunity to conduct this research.

\end{document}